\theoremstyle{plain}
\newtheorem{thm}{Theorem}[section]
\newtheorem{thm*}{Theorem}[section]
\newtheorem{cor}[thm]{Corollary}
\newtheorem{prop}[thm]{Proposition}
\newtheorem{lemma}[thm]{Lemma}
\newtheorem{lemma*}{Lemma}
\theoremstyle{definition}
\newtheorem{defn}[thm]{Definition}
\newtheorem{remark}[thm]{Remark}
\newtheorem*{remark*}{Remark}
\newtheorem{ex}[thm]{Example}
\newtheorem{notation}[thm]{Notation}
\newtheorem{question*}{Question}
\numberwithin{equation}{thm}
\newcommand{\bR}{\mathbb R}
\newcommand{\bS}{\mathbb S}
\newcommand{\bM}{\mathbb M}
\newcommand{\cN}{\mathcal N}
\def\Rad{\operatorname{Rad}\nolimits}
\def\Spec{\operatorname{Spec}\nolimits}
\def\Ker{\operatorname{Ker}\nolimits}
\def\Lie{\operatorname{Lie}\nolimits}
\def\Soc{\operatorname{Soc}\nolimits}
\def\Im{\operatorname{Im}\nolimits}
\def\Proj{\operatorname{Proj}\nolimits}
\def\dim{\operatorname{dim}\nolimits}
\def\Ind{\operatorname{Ind}\nolimits}
\def\Grass{\operatorname{Grass}\nolimits}
\def\LG{\operatorname{LG}\nolimits}
\def\Aut{\operatorname{Aut}\nolimits}
\def\Dim{\operatorname{Dim}\nolimits}
\def\Ad{\operatorname{Ad}\nolimits}
\def\Stab{\operatorname{Stab}\nolimits}
\newcommand{\bSoc}{\mathbb S\rm oc}
\newcommand{\bRad}{\mathbb R\rm ad}
\newcommand{\bG}{\mathbb G}
\newcommand{\bA}{\mathbb A}
\newcommand{\bP}{\mathbb P}
\newcommand{\bZ}{\mathbb Z}
\newcommand{\cC}{\mathcal C}
\newcommand{\bW}{\mathbb W}
\newcommand{\fb}{\mathfrak b}
\newcommand{\fc}{\mathfrak c}
\newcommand{\fp}{\mathfrak p}
\newcommand{\ff}{\mathfrak f}
\newcommand{\fg}{\mathfrak g}
\newcommand{\fh}{\mathfrak h}
\newcommand{\fz}{\mathfrak z}
\newcommand{\fu}{\mathfrak u}
\newcommand{\gl} {\mathfrak {gl}}
\newcommand{\fn}{\mathfrak n}
\newcommand{\fsl} {\mathfrak {sl}}
\newcommand{\fsp} {\mathfrak {sp}}
\newcommand{\fl}{\mathfrak l}
\newcommand{\ul}{\underline}
\def\pr{\operatorname{pr}\nolimits}
\def\id{\operatorname{id}\nolimits}
\def\Spec{\operatorname{Spec}\nolimits}
\def\sl2{\operatorname{SL_{2(2)}}\nolimits}
\def\Ga2{\operatorname{\mathbb G_{\rm a(2)}}\nolimits}
\def\SL{\operatorname{SL}\nolimits}
\def\GL{\operatorname{GL}\nolimits}
\def\Sp{\operatorname{Sp}\nolimits}
\def\HHH{\operatorname{H}\nolimits}
\def\Ext{\operatorname{Ext}\nolimits}
\def\Hom{\operatorname{Hom}\nolimits}
\def\red{\operatorname{red}\nolimits}
\newcommand{\wt}{\widetilde}
\newcommand{\Z}{\mathbb Z}
\newcommand{\bE}{\mathbb E}
\newcommand{\bV}{\mathbb V}
\newcommand{\bu}{\bullet}
\date\today
\begin{document}

 \title[Elementary subalgebras ]{Elementary subalgebras of Lie algebras}
 
 \author[Jon F. Carlson, Eric M. Friedlander, and Julia Pevtsova]
{Jon F. Carlson$^*$, Eric M. Friedlander$^{**}$, and 
Julia Pevtsova$^{***}$}

\address {Department of Mathematics, University of Georgia, 
Athens, GA}
\email{jfc@math.uga.edu}

\address {Department of Mathematics, University of Southern California,
Los Angeles, CA}
\email{ericmf@usc.edu, eric@math.northwestern.edu}

\address {Department of Mathematics, University of Washington, 
Seattle, WA}
\email{julia@math.washington.edu}


\thanks{$^*$ partially supported by the NSF grant DMS-1001102}
\thanks{$^{**}$ partially supported by the NSF grant DMS-0909314 and DMS-0966589}
\thanks{$^{***}$ partially supported by the NSF grant DMS-0800930 and DMS-0953011}

\subjclass[2000]{17B50, 16G10}

\keywords{restricted Lie algebras, algebraic vector bundles}

\begin{abstract} 
We initiate the investigation of  the projective 
varieties $\bE(r,\fg)$ of elementary subalgebras of 
dimension $r$ of a ($p$-restricted) Lie algebra $\fg$ 
for various $r \geq 1$.   These varieties $\bE(r,\fg)$ are 
the natural ambient varieties for generalized support
varieties for restricted representations of $\fg$.  
We identify these varieties in special cases, revealing
their interesting and varied geometric structures.   We 
also introduce invariants for a  finite dimensional $\fu(\fg)$-module
$M$, the local $(r,j)$-radical rank and local $(r,j)$-socle
rank, functions which are lower/upper semicontinuous
on $\bE(r,\fg)$.  Examples are given of $\fu(\fg)$-modules for
which some of these rank functions are constant.
\end{abstract}

\maketitle

\section{Introduction}

We say that a Lie subalgebra $\epsilon \subset \fg$  
of a $p$-restricted Lie algebra $\fg$ over a field $k$ 
of characteristic $p$ is {\it elementary} if it is abelian with trivial
$p$-restriction. Thus,  if $\epsilon$ has dimension 
$r$, then $\epsilon \simeq \fg_a^{\oplus r}$ 
where $\fg_a$ is the one dimensional Lie algebra of 
the additive group $\bG_a$.  This paper 
is dedicated to the study of  the projective variety 
$\bE(r, \fg)$ of elementary subalgebras 
of $\fg$ for some positive integer $r$ and its 
relationship to the representation theory of $\fg$. 

We have been led to the investigation of $\bE(r,\fg)$ 
through considerations of cohomology and modular 
representations of finite group schemes.   Recall that 
the structure of a
 restricted representation of $\fg$  on a $k$-vector space  
is equivalent to the structure of a module 
for the restricted enveloping algebra $\fu(\fg)$ of $\fg$ 
(a cocommutative Hopf algebra over $k$ of dimension 
$p^{\dim(\fg)}$).    A key precursor of this present work 
is the identification of the spectrum of the
cohomology algebra  $\HHH^*(\fu(\fg),k)$ with the 
$p$-nilpotent cone $\cN_p(\fg)$
achieved in \cite{FPar1}, \cite{Jens}, \cite{AJ84}, \cite{SFB2}.  
The projectivization of $\cN_p(\fg)$ equals $\bE(1,\fg)$.
More generally,
$\bE(r,\fg)$ is the orbit space under the evident $\GL_r$-action 
on the variety of $r$-tuples of commuting, 
linearly independent, p-nilpotent elements of $\fg$.    Our 
interest in  $\bE(r,\fg)$ and its close connections
with the representation theory of $\fg$ can be traced back 
through the 
work of many authors to the fundamental papers of Daniel Quillen  
who established the important 
geometric role that elementary abelian $p$-subgroups play in 
the cohomology  theory of  finite groups \cite{Q}.

It is interesting to observe that
the theory of cohomological support varieties for 
restricted $\fg$-representations 
(i.e., $\fu(\fg)$-modules)
as considered first in \cite{FPar2} has evolved into 
the more geometric study of $\pi$-points as 
introduced by the second and third authors in \cite{FP2}.  
This latter work closed a historical loop, 
relating cohomological considerations to earlier 
work on cyclic shifted subgroups as
investigated by Everett Dade \cite{Dade} and the first author \cite{C1}.

For $r > 1$ and $\fg$ the Lie algebra of a connected reductive group 
$G$, $\bE(r,\fg)$ is closely related to the spectrum of 
cohomology of the $r$-th Frobenius kernel $G_{(r)}$ of 
$G$ (see \cite{SFB1} for classical simple groups $G$;  
\cite{McN}, \cite{Sob} for more general types).   
Work of Alexander Premet 
concerning the variety of commuting, nilpotent pairs in $\fg$ \cite{Prem} 
gives  considerable information about $\bE(2,\fg)$.    
 Much less is known for larger $r$'s, although work 
in progress indicates the usefulness
 of considering the representation theory of $\fg$ 
when investigating the topology of $\bE(r,\fg)$. 
  
We consider numerous examples of restricted Lie 
algebras $\fg$ in Section \ref{Er},
and give some explicit computations of $\bE(r, \fg)$.  
Influenced by the role of maximal elementary abelian 
$p$-subgroups in the study of the cohomology
of finite groups, we are especially interested in  
examples of $\bE(r, \fg)$ considered in 
Section \ref{sec:max} for which $r$ is maximal 
among the dimensions of elementary subalgebras of $\fg$.  
For simple Lie algebras over a field of  characteristic 0, 
Anatoly Malcev determined this maximal dimension
\cite{Mal51} which is itself an interesting invariant 
of $\fg$.  Our computations verify that the Grassmann 
variety of $n$ planes in a $2n$-dimensional
$k$-vector space maps bijectively (via a finite, radicial
morphism) to $\bE({n^2}, \gl_{2n})$; similar results apply to
the computation of $\bE(n(n+1),\gl_{2n+1})$  and 
$\bE\left(\frac{(n+1)n}{2}, \fsp_{2n}\right)$.  As we point out 
in Section~\ref{sec:max}, these maps turn out to be 
isomorphisms of varieties.   The reader interested in the description of 
$\bE(r, \fg)$ for other types of simple Lie algebras $\fg$ can find them in a 
forthcoming paper \cite{PS}.  
We also provide some computations for restricted Lie algebras not arising from reductive groups.  

We offer several explicit motivations for considering 
$\bE(r,\fg)$ in addition to the fact that these projective varieties
are of intrinsic interest.  Some of these  motivations are pursued 
in Sections 3 and 4 where (restricted) representations
of $\fg$ come to the fore.  We point to the forthcoming paper 
\cite{CFP4}, which utilizes the discussion of this current work 
in an investigation of coherent sheaves
and algebraic vector bundles on $\bE(r,\fg)$.

\vspace{0.1in}
$\bu$ 
The varieties $\bE(r, \fg)$ are the natural ambient 
varieties in which to define generalized
support varieties for restricted representations 
of $\fg$ (as in \cite{FP3}).

\vspace{0.1in}
$\bu$ Coherent sheaves on $\bE(r,\fg)$ are naturally 
associated to arbitrary (restricted) representations of $\fg$
(see \cite{CFP4}).

\vspace{0.1in}
$\bu$  For certain representations of $\fg$ including those 
of constant Jordan type, the associated coherent sheaves are 
algebraic vector bundles on $\bE(r, \fg)$ (see \cite{CFP4}).

\vspace{0.1in}
$\bu$   Determination of the  (Zariski) topology of $\bE(r,\fg)$ 
is an interesting challenge which can be informed 
by the representation theory  of $\fg$.

\vspace{0.1in}
The isomorphism type of the restriction $\epsilon^*M$ of a 
$\fu(\fg)$-module $M$ to an elementary
subalgebra $\epsilon$ of dimension 1 is given by its Jordan 
type, which is a partition of the 
dimension of $M$.
On the other hand, the classification of indecomposable 
modules of  an elementary subalgebra of dimension 
$r > 1$ is a wild problem (except in the special case in 
which $r = 2 = p$), so that the isomorphism
types of $\epsilon^*M$ for $\epsilon \in \bE(r, \fg)$ do 
not form convenient invariants of a $\fu(\fg)$-module $M$.  
Following the approach undertaken in \cite{CFP2}, we consider  
the dimensions of the 
radicals and socles of such restrictions, 
$\dim \Rad^j(\epsilon^*M)$ and $\dim \Soc^j(\epsilon^* M)$,
for $\epsilon \in \bE(r, \fg)$ and any $j$ with 
$1 \leq j \leq (p-1)r$.  As we establish in
Section \ref{rad-soc}, these dimensions give 
upper/lower semi-continuous functions on
$\bE(r, \fg)$.  In particular, they lead to 
``generalized rank varieties" refining those introduced
in \cite{FP4}.  We achieve some computations of these 
generalized rank varieties $\bE(r, \fg)_M$ 
for two families of $\fu(\fg)$-modules $M$:  the $L_\zeta$ 
modules which play an important role in the theory of support varieties   
(see, for example, \cite[5.9]{Ben}) and induced modules.

One outgrowth of the authors' interpretation of cohomological support varieties
in terms of $\pi$-points (as in \cite{FP2}) is the 
identification of the interesting classes of 
modules of constant Jordan type and constant 
$j$-rank for $1 \leq j < p$ (see \cite{CFP1}).  
As already seen in \cite{CFP2}, this has a natural 
analogue in the context of elementary
subalgebras of dimension $r > 1$.  In Section 
\ref{sec:constant}, we give examples
of $\fu(\fg)$-modules of constant $(r,j)$-radical 
rank and of constant $(r,j)$-socle rank.
This represents a continuation of investigations initiated by the authors in 
\cite{CFP1}, \cite{FP4} (see also \cite{Ba11}, \cite{B10},  
\cite{Ben2}, \cite{BP12}, \cite{CF09}, \cite{CFS11}, \cite{F09}, and  others).  

Although we postpone consideration of Lie algebras over fields of
characteristic 0, we remark that much of the formalism
of Sections 1 and 3, and many of the
examples in Sections 2 are valid (and often easier)
in characteristic 0.   On the other hand,
some of our results and examples, particularly in
Section 4, require that $k$ have 
positive characteristic.

In a sequel to this work (see \cite{CFP4})  we show that $\fu(\fg)$-modules of constant 
$(r,j)$-radical rank and of constant $(r,j)$-socle rank determine vector 
bundles on $\bE(r,\fg)$.  Of particular interest are those $\fu(\fg)$-modules 
not equipped with large groups of symmetries.   We anticipate 
that the investigation of such modules may provide algebraic vector bundles with 
interesting properties.

Throughout, $k$ is an algebraically closed field of characteristic $p>0$.
All Lie algebras $\fg$ considered in this paper 
are assumed to be finite dimensional
over $k$ and $p$-restricted; a Lie subalgebra $\fh \subset \fg$ is
assumed to be closed under $p$-restriction.  
Without explicit mention to the contrary, all
$\fu(\fg)$-modules are finite dimensional.

We thank Steve Mitchell and Monty McGovern 
for useful discussions pertaining to 
the material in Section \ref{sec:max}.    
We also thank the referee for many useful comments and suggestions. 
 

\section{The subvariety $\bE(r,\fg)$ of $\Grass(r,\fg)$}
\label{Er}

We begin by formulating the definition of 
$\bE(r,\fg)$ of the variety of elementary subalgebras 
of $\fg$ and establishing the existence of 
a natural closed embedding of $\bE(r,\fg)$ into the 
projective variety $\Grass(r,\fg)$ of $r$-planes 
of the underlying vector space of $\fg$.  Once
these preliminaries are complete, we introduce 
various examples which  reappear 
frequently, here and in \cite{CFP4}.

Let $V$ be an $n$-dimensional vector space and $r < n$ 
a positive integer.  We consider the projective 
variety $\Grass(r, V)$ of $r$-planes of $V$.  
We choose a basis for $V, \ \{v_1,\ldots, v_n\}$;  
a change of basis has the effect of 
changing the Pl\"ucker embedding (\ref{pl}) by 
a linear automorphism of $\bP(\Lambda^r(V))$.
We represent a choice of basis $\{ u_1,\ldots,u_r\}$ 
for an $r$-plane $U \subset V$ by an 
$n\times r$-matrix $(a_{i,j})$, where 
$u_j = \sum_{i=1}^n a_{i,j}v_i$.  Let $\bM_{n,r}^\circ \subset
\bM_{n,r}$ denote the open subvariety of  the affine space
$\bM_{n,r} \simeq \bA^{nr}$ consisting of 
$n\times r$ matrices of (maximal) rank $r$ and set 
$\xymatrix@=18pt{p: \bM_{n,r}^\circ \ar[r]&  \Grass(r, V)}$ equal to 
the map sending a rank $r$ matrix $(a_{i,j})$ to
the $r$-plane spanned by 
$\{  \sum_{i=1}^n a_{i,1}v_i, \ldots,  \sum_{i=1}^n a_{i,r}v_i \}$.

We summarize a few useful, well known facts about 
$\Grass(r,V)$. Note that there is a natural (left) 
action of $\GL_r$ on $\bM_{n,r}$ via multiplication 
by the inverse on the right.

\begin{prop}
\label{note}
For any subset $\Sigma \subset \{ 1, \ldots, n \}$ of cardinality $r$, set $U_\Sigma \subset 
\Grass(r, V)$ to be the subset of those $r$-planes $U \subset V$ with a representing $n\times r$ 
matrix $A_U$ whose $r\times r$ minor indexed by $\Sigma$ (denoted by $\fp_\Sigma(A_U)$) 
is non-zero.    Then we have the following:
\begin{enumerate}
\item  
$\xymatrix@=12pt{p: \bM_{n,r}^\circ \ar[r]&  \Grass(r, V)}$ is a principal 
$\GL_r$-torsor, locally trivial in the Zariski topology;
\item
Sending an $r$-plane $U \in U_\Sigma$ to the unique $n\times r$-matrix $A^\Sigma_U$ 
whose $\Sigma$-submatrix (i.e., the $r\times r$-submatrix whose rows are those of $A^\Sigma_U$ 
indexed by elements of $\Sigma$) is the identity determines a section of $p$ over $U_\Sigma$:
\begin{equation}
\label{section}
\xymatrix@=12pt{s_\Sigma: U_\Sigma  \ar[r]& \bM_{n.r}^\circ;}
\end{equation}
\item
The Pl\"ucker embedding 
\begin{equation}
\label{pl}
\fp: \Grass(r, V) \hookrightarrow \bP(\Lambda^r(\bV)), \quad U \mapsto [\fp_\Sigma(A_U)]
\end{equation}
sending $U \in U_\Sigma$ to the ${n\choose r}$-tuple of $r\times r$-minors of $A_U^\Sigma$ is a 
closed immersion of algebraic varieties;
\item
$U_\Sigma \subset \Grass(r, V)$ is a Zariski open subset, the complement of the zero locus 
of $\fp_\Sigma$, and is isomorphic to $\bA^{r(n-r)}$.
\end{enumerate}
\end{prop}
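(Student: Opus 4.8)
The plan is to treat the four parts in the order (2), (4), (3), (1): the section $s_\Sigma$ of part (2) is the main tool both for the local triviality in (1) and for the affine chart in (4), while the first half of the argument for (3) (well-definedness of the Pl\"ucker coordinates up to scalar) is what makes rigorous the description of $U_\Sigma$ used in (4). For (2), given $U \in U_\Sigma$ pick any $A \in \bM_{n,r}^\circ$ with $p(A) = U$; its $\Sigma$-submatrix $A_\Sigma$ is invertible by hypothesis, and $A\,A_\Sigma^{-1}$ has $\Sigma$-submatrix $I_r$ and still spans $U$ (right multiplication by $\GL_r$ replaces one basis of $U$ by another). If $A'$ also has $\Sigma$-submatrix $I_r$ and spans $U$, then $A' = (A A_\Sigma^{-1})g$ for some $g \in \GL_r$, and reading off the rows indexed by $\Sigma$ gives $I_r = g$, so $A' = A A_\Sigma^{-1}$. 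Thus $s_\Sigma(U) := A A_\Sigma^{-1}$ is well defined, independent of $A$; it is a morphism since on the locus $\fp_\Sigma \ne 0$ the entries of $A A_\Sigma^{-1}$ are polynomials in the entries of $A$ divided by $\det A_\Sigma = \fp_\Sigma(A)$; and $p \circ s_\Sigma = \id$ by construction.

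Next, replacing a representing matrix $A_U$ by $A_U g$ scales every $r \times r$ minor by $\det g$, so $[\fp_\Sigma(A_U)]$ is a well-defined point of $\bP(\Lambda^r V)$ and $\fp$ is a well-defined map; it is a morphism because over $U_\Sigma$ it is given, in the standard affine chart $\{x_\Sigma \ne 0\}$, by the minors of the normalized matrix $s_\Sigma(U)$, which are polynomials in its $r(n-r)$ free entries. This yields (4): $U_\Sigma = \fp^{-1}(\{x_\Sigma \ne 0\})$ is Zariski open in $\Grass(r,V)$, being the complement of the zero locus of the Pl\"ucker coordinate $\fp_\Sigma$; and $s_\Sigma$ identifies $U_\Sigma$ with the set of $n \times r$ matrices whose $\Sigma$-submatrix is $I_r$ and whose remaining $(n-r) \times r$ entries range freely over $\bA^{r(n-r)}$, the inverse being the restriction of $p$. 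As both maps are morphisms, $U_\Sigma \cong \bA^{r(n-r)}$.

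For (3) I would use that being a closed immersion is local on the target and that the affine opens $\{x_\Sigma \ne 0\}$, $|\Sigma| = r$, cover $\bP(\Lambda^r V)$; since $\fp^{-1}(\{x_\Sigma \ne 0\}) = U_\Sigma \cong \bA^{r(n-r)}$, it suffices that $\fp|_{U_\Sigma} \colon U_\Sigma \to \{x_\Sigma \ne 0\} \cong \bA^{\binom{n}{r}-1}$ be a closed immersion. The substantive observation is that, writing $\Sigma = \{\sigma_1 < \cdots < \sigma_r\}$, for each $i \notin \Sigma$ and each $m$ the minor of $s_\Sigma(U)$ indexed by $(\Sigma \setminus \{\sigma_m\}) \cup \{i\}$ equals $\pm a_{i,m}$: in $s_\Sigma(U)$ the columns other than the $m$-th restrict, on the rows indexed by $\Sigma$, to distinct standard basis vectors, and expanding the determinant along them leaves only the entry $a_{i,m}$. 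Hence, up to sign and reindexing, all $r(n-r)$ coordinate functions of $\bA^{r(n-r)}$ occur among the component functions of $\fp|_{U_\Sigma}$, so this morphism identifies $\bA^{r(n-r)}$ with the graph of its remaining component functions, a closed subset; thus $\fp|_{U_\Sigma}$, and therefore $\fp$, is a closed immersion.

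Finally, for (1): the $\GL_r$-action on $\bM_{n,r}^\circ$ is free, since a rank-$r$ matrix has linearly independent columns so $A g^{-1} = A$ forces $g = I_r$, and two matrices in $\bM_{n,r}^\circ$ span the same $r$-plane exactly when they lie in one $\GL_r$-orbit, so the fibers of $p$ are the orbits. The opens $U_\Sigma$ cover $\Grass(r,V)$ because a rank-$r$ matrix has some nonzero $r \times r$ minor, and over $U_\Sigma$ the map $U_\Sigma \times \GL_r \to p^{-1}(U_\Sigma)$, $(U,g) \mapsto s_\Sigma(U)\,g^{-1}$, is $\GL_r$-equivariant with inverse $A \mapsto (p(A),\,(A_\Sigma)^{-1})$, both morphisms (using part (2) and Cramer's rule); this is a Zariski-local trivialization, so $p$ is a principal $\GL_r$-torsor, locally trivial in the Zariski topology. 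The one step needing an actual idea rather than bookkeeping is the closed-immersion claim in (3) — recovering the affine coordinates on $U_\Sigma$ from the Pl\"ucker coordinates — while everything else is routine verification.
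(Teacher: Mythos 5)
Your proof is correct, but there is nothing in the paper to compare it with: Proposition \ref{note} is stated as a summary of ``well known facts'' about $\Grass(r,V)$ and the authors give no proof at all. What you have written is the standard argument, and all the essential points are present: the normalization $A\mapsto A A_\Sigma^{-1}$ with its uniqueness, the identification of $U_\Sigma$ with $\bA^{r(n-r)}$, the key computation that the minor of the normalized matrix indexed by $(\Sigma\setminus\{\sigma_m\})\cup\{i\}$ recovers $\pm a_{i,m}$ (which makes $\fp|_{U_\Sigma}$ a graph, hence a closed immersion, and then closedness is checked locally on the charts $\{x_T\neq 0\}$), and the explicit equivariant trivialization $(U,g)\mapsto s_\Sigma(U)g^{-1}$ with inverse $A\mapsto (p(A),A_\Sigma^{-1})$, which together with freeness of the action and the fact that fibers of $p$ are orbits gives the Zariski-locally trivial torsor.

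One point worth tightening is logical rather than mathematical: your order $(2),(4),(3),(1)$ asserts in step $(2)$ that $s_\Sigma$ is a morphism before you have said what structure of variety $\Grass(r,V)$ carries. The phrase ``the entries of $AA_\Sigma^{-1}$ are polynomials in the entries of $A$ divided by $\det A_\Sigma$'' only shows that $A\mapsto AA_\Sigma^{-1}$ is a morphism on $p^{-1}(U_\Sigma)\subset \bM_{n,r}^\circ$; to descend this to $U_\Sigma$ you need to know either that $p$ is a quotient in a suitable sense (part of $(1)$) or that the functions $U\mapsto a_{i,m}$ are regular on $U_\Sigma$, which, if one takes the Pl\"ucker embedding as the definition of the variety structure, is exactly the minor identity you prove later in $(3)$ (the $a_{i,m}$ are ratios of Pl\"ucker coordinates by $\fp_\Sigma$). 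So either fix the convention that $\Grass(r,V)$ is defined by its Pl\"ucker image and move the minor computation before the morphism claim in $(2)$, or phrase $(2)$ at the level of set-theoretic sections and upgrade to morphisms once the chart description is available. This is a matter of arrangement, not a gap in the mathematics.
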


Elementary subalgebras as defined below play the central role in what follows.   

\begin{defn}
An elementary subalgebra $\epsilon \subset \fg$ of dimension $r$ 
is a Lie subalgebra of dimension $r$ which is commutative and has $p$-restriction equal to 0.
We define
$$ \bE(r,\fg)  \ = \ \{ \epsilon \subset \fg: ~ \epsilon 
\text {~elementary subalgebra of dimension } r\}$$
\end{defn}

We denote by $\cN_p(\fg) \subset \fg$ the closed 
subvariety of $p$-nilpotent elements  of $\fg$ (that is, 
$\cN_p(\fg) = \{x \in \fg\, | \, x^{[p]} = 0\}$), by $\cC_r(\cN_p(\fg)) 
\subset (\cN_p(\fg))^{\times r}$ the variety of $r$-tuples
of $p$-nilpotent, pairwise commuting elements of $\fg$, 
and by $\cC_r(\cN_p(\fg))^\circ \subset \cC_r(\cN_p(\fg))$
the open subvariety of linearly independent $r$-tuples
of $p$-nilpotent, pairwise commuting elements of $\fg$.

\begin{notation} 
\label{not:action} For an algebraic group $G$ with Lie algebra $\fg = \Lie G$, we consistently use the adjoint action of 
$G$ on $\bE(r,\fg)$.  Explicitly, for an $r$-dimensional elementary subalgebra $\epsilon \subset \fg$, and for $g \in G$, we denote by $g \cdot \epsilon \in \bE(r, \fg)$ the $r$-dimensional elementary subalgebra defined as follows: 
\[
G \cdot \epsilon := \{ \Ad(g) x \, | \, x \in \epsilon\}
\]
Consequently, we use $G \cdot \epsilon$ to denote the orbit of $\epsilon \in \bE(r, \fg)$ under this action.  
\end{notation} 

\begin {prop}
\label{embed}
Let $\fg$ be a Lie algebra of dimension $n$, let $r$ be a positive integer, $1 \leq r < n$, 
and let   $\Grass(r, \fg)$ be the projective variety of $r$-planes of $\fg$, where we view $\fg$ as a vector space.  
There exists a natural cartesian square 
 \begin{equation}
 \label{sq}
\xymatrix{ 
\cC_r(\cN_p(\fg))^\circ \ar[d] \ar@{^(->}[r] & \bM_{n,r}^\circ \ar[d]^p\\
\bE(r,\fg)  \ar@{^(->}[r] & \Grass(r, \fg)
} \end{equation}
whose vertical maps are $\GL_r$-torsors locally trivial for the Zariski topology
and whose horizontal maps are closed immersions.  In particular, $\bE(r,\fg)$
has a natural structure of a projective algebraic variety, as a 
reduced closed subscheme of $Grass(r,\fg)$.

If $G$ is a linear algebraic group with $\fg = \Lie (G)$,
then $\bE(r,\fg) \ \hookrightarrow \  \Grass(r, \fg)$ is a $G$-stable 
embedding with respect to the adjoint action of $G$.
\end{prop}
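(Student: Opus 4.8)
The plan is to exhibit the bottom horizontal arrow of \eqref{sq} as a closed immersion by realizing $\bE(r,\fg)$ inside $\Grass(r,\fg)$ as the common zero locus of explicit equations, and to verify that the square is cartesian with $\GL_r$-torsor vertical maps by pulling back to $\bM_{n,r}^\circ$, where the conditions become visibly polynomial. First I would describe $\cC_r(\cN_p(\fg))^\circ \subset \bM_{n,r}^\circ$ concretely: an $n\times r$ matrix $A=(a_{i,j})$ of rank $r$ with columns $u_1,\dots,u_r \in \fg$ lies in $\cC_r(\cN_p(\fg))^\circ$ precisely when $[u_i,u_j]=0$ for all $i,j$ and $u_i^{[p]}=0$ for all $i$. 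The bracket conditions are bilinear in the entries of $A$ (via the structure constants of $\fg$), hence polynomial; the $p$-restriction conditions $u_i^{[p]}=0$ are polynomial of degree $p$ in the entries, since $x\mapsto x^{[p]}$ is a $p$-polynomial map on the finite-dimensional vector space $\fg$ (additive up to lower-order Lie-bracket correction terms, all polynomial). Therefore $\cC_r(\cN_p(\fg))^\circ$ is a closed subscheme of $\bM_{n,r}^\circ$; intersecting with the reduced structure gives the closed immersion on the top row. (I would note that a subspace spanned by commuting $p$-nilpotent elements is automatically an elementary subalgebra, so membership is detected columnwise as stated.)

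Next I would check that the defining conditions are $\GL_r$-invariant: replacing $A$ by $Ag^{-1}$ for $g\in\GL_r$ replaces the columns $u_1,\dots,u_r$ by $\mathbb{k}$-linear combinations of themselves, and the span of commuting $p$-nilpotent elements is unchanged — any linear combination of pairwise-commuting $p$-nilpotent elements is again $p$-nilpotent (the bracket terms vanish, so $x\mapsto x^{[p]}$ is genuinely additive and homogeneous of degree $p$ on $\epsilon$). Hence $\cC_r(\cN_p(\fg))^\circ$ is a $\GL_r$-stable closed subvariety of $\bM_{n,r}^\circ$, and its image $p(\cC_r(\cN_p(\fg))^\circ)$ in $\Grass(r,\fg)$ is exactly the set of $r$-dimensional elementary subalgebras, i.e.\ $\bE(r,\fg)$ as a set. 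To see that $\bE(r,\fg)$ is closed in $\Grass(r,\fg)$ and that the square is cartesian, I would work over each standard affine chart $U_\Sigma \cong \bA^{r(n-r)}$ of Proposition~\ref{note}, using the section $s_\Sigma: U_\Sigma \to \bM_{n,r}^\circ$ of \eqref{section}. Over $U_\Sigma$ the torsor $p$ is trivial, $p^{-1}(U_\Sigma)\cong U_\Sigma\times\GL_r$ via $(u,g)\mapsto g\cdot s_\Sigma(u)$, and by $\GL_r$-invariance the closed subvariety $\cC_r(\cN_p(\fg))^\circ\cap p^{-1}(U_\Sigma)$ is of the form $Z_\Sigma\times\GL_r$ where $Z_\Sigma\subset U_\Sigma$ is the closed subvariety cut out by the equations $[s_\Sigma(u)_i, s_\Sigma(u)_j]=0$ and $s_\Sigma(u)_i^{[p]}=0$ — explicit polynomials in the coordinates of $U_\Sigma$. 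Thus $\bE(r,\fg)\cap U_\Sigma = Z_\Sigma$ is closed in $U_\Sigma$; gluing over the finitely many $\Sigma$ gives that $\bE(r,\fg)\hookrightarrow\Grass(r,\fg)$ is a closed immersion, that the vertical map $\cC_r(\cN_p(\fg))^\circ\to\bE(r,\fg)$ is a Zariski-locally-trivial $\GL_r$-torsor, and that \eqref{sq} is cartesian (both being $Z_\Sigma\times\GL_r \to Z_\Sigma$ locally). Giving $\bE(r,\fg)$ the reduced induced structure then makes it a reduced closed subscheme of $\Grass(r,\fg)$, hence a projective variety.

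Finally, for the $G$-equivariance statement: when $\fg=\Lie(G)$, the adjoint action of $G$ on $\fg$ is by Lie algebra automorphisms preserving the $p$-operation, so $\Ad(g)$ sends commuting $p$-nilpotent elements to commuting $p$-nilpotent elements, hence carries elementary subalgebras to elementary subalgebras of the same dimension; the induced $G$-action on $\Grass(r,\fg)$ (functorial in the vector space $\fg$ with its linear $G$-action) therefore restricts to the $G$-action on $\bE(r,\fg)$ of Notation~\ref{not:action}, and the embedding $\bE(r,\fg)\hookrightarrow\Grass(r,\fg)$ is $G$-stable. The main obstacle I anticipate is the bookkeeping in verifying that $x\mapsto x^{[p]}$ is genuinely a polynomial (indeed $p$-polynomial) map $\fg\to\fg$ and that it becomes additive and degree-$p$ homogeneous upon restriction to a subspace of pairwise commuting elements — this is the standard fact about $p$-semilinearity of the restriction map together with Jacobson's formula for $(x+y)^{[p]}$, but one must be careful that all the correction terms involve iterated brackets and hence vanish on an elementary subspace, which is what makes both the closedness of $\cC_r(\cN_p(\fg))^\circ$ and the $\GL_r$-invariance come out cleanly.
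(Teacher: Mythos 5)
Your proposal is correct and follows essentially the same route as the paper's proof: reduce everything to the closedness of $\cC_r(\cN_p(\fg))^\circ$ in $\bM_{n,r}^\circ$, which holds because the commutation and $p$-power conditions on the columns are polynomial in the matrix entries, then use the $\GL_r$-equivariant trivialization of $p$ over the charts $U_\Sigma$ of Proposition~\ref{note} to get closedness of $\bE(r,\fg)$, local triviality of the torsor, and the cartesian property, with $G$-stability following because $\Ad$ preserves the bracket and the $p$-operation. You merely spell out a few points the paper leaves as evident (the Jacobson-formula argument for $\GL_r$-invariance and the columnwise characterization), which does not change the argument.
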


\begin{proof}
The horizontal maps of (\ref{sq}) are the evident inclusions, the left vertical map is the
restriction of $p$.  Clearly, (\ref{sq}) is cartesian; in particular, 
$\cC_r(\cN_p(\fg))^\circ \subset \bM_{n,r}^\circ$ is stable under the action of $\GL_r$.   

To prove that $\bE(r,\fg) \ \subset \ \Grass(r,\fg)$ 
is closed, it suffices to verify for each $\Sigma$
that $(\bE(r,\fg) \cap U_\Sigma) \ \subset \ U_\Sigma$ 
is a closed embedding.   The restriction
of (\ref{sq}) above $U_\Sigma$ takes the form
 \begin{equation}
 \label{sqq}
\xymatrix{ 
\cC_r(\cN_p(\fg))^\circ \cap p^{-1}(U_\Sigma) \ar[d] \ar[r] & p^{-1}(U_\Sigma) \ar[d]^p 
\ar[r]^-{\sim} & U_\Sigma \times \GL_r \ar[d]^{pr}\\
\bE(r,\fg) \cap U_\Sigma \ar[r] & U_\Sigma \ar@{=}[r] & U_\Sigma
} 
\end{equation}
Consequently, to prove that  $\bE(r,\fg) \ \subset \ \Grass(r,\fg)$ is closed and that 
$\cC_r(\cN_p(\fg))^\circ \to \bE(r,\fg)$ is a $\GL_r$-torsor 
which is locally trivial for the Zariski topology
it suffices to prove that $\cC_r(\cN_p(\fg))^\circ \subset \bM_{n,r}^\circ$ is closed.

It is clear that $\cC_r(\cN_p(\fg)) \subset \bM_{n,r}$ is a closed subvariety since it is  
defined by the vanishing of the Lie bracket and the $p$-operator $(-)^{[p]}$ both of which 
can be expressed as polynomial equations on the matrix coefficients. Hence, 
$\cC_r(\cN_p(\fg))^\circ = \cC_r(\cN_p(\fg)) \cap \bM_{n,r}^\circ$  is closed in $\bM_{n,r}^\circ$.

If $\fg = \Lie(G)$, then the (diagonal) adjoint action 
of $G$ on $n\times r$-matrices $\fg^{\oplus r}$
sends a matrix whose columns
pair-wise commute and which satisfies the condition that $(-)^{[p]}$ vanishes on these columns
to another matrix satisfying the same conditions (since $\Ad: G \to \Aut(\fg)$ preserves both 
the Lie bracket and the $p^{th}$-power).  Thus, $\bE(r,\fg)$ is $G$-stable.
\end{proof}

\begin{remark}
\label{rem:cfp2} Let $V$ be a $k$-vector space of dimension $n$, and let $V^\# = \Hom_k(V,k)$ denote its linear dual.
Consider $\bV \equiv \Spec S^*(V^\#) \simeq \bG_a^{\times n}$, the vector group on the (based) 
vector space $V$. Then  $\Lie(\bV) \simeq \fg_a^{\oplus n}$ and we have an isomorphism of algebras
$$
\fu(\Lie \bV) \ \simeq \ \fu(\fg_a^{\oplus n}) \ \simeq \ k[t_1,\ldots,t_n]/(t_1^p,\ldots,t_n^p).
$$
Let $E = (\bZ/p)^{\times n}$ be an elementary abelian $p$-group of rank $n$ and choose an embedding 
of $V$ into the radical $\Rad(kE)$ of the group algebra of $E$ such that the composition with the 
projection to $\Rad(kE)/\Rad^2(kE)$ is an isomorphism.  This choice determines an isomorphism 
$$
\xymatrix@=18pt{\fu(\Lie(\bV))\ar[r]^-\sim & kE}.
$$
With this identification, the investigations of \cite{CFP2} 
are special cases of 
considerations of this paper.
\end{remark}

\begin{ex}
\label{ex:r=1}
For any (finite dimensional, $p$-restricted) Lie algebra, 
$$
\bE(1,\fg)\ \simeq \ \Proj k[\cN_p(\fg)]
$$
as shown in \cite{SFB2}, where $k[\cN_p(\fg)]$  is the (graded) coordinate
algebra of the $p$-null cone of $\fg$.
If $G$ is reductive with $\fg = \Lie(G)$ and if $p$ is good for $G$, then $\cN_p(\fg)$ is 
irreducible and equals the $G$-orbit $G \cdot \fu$ of the nilpotent radical 
of a specific parabolic subalgebra $\fp \subset \fg$ (see \cite[6.3.1]{NPV}).
\end{ex}

\begin{ex}
\label{ex:premet} 
 Let $G$ be a connected reductive algebraic group, let 
$\fg = \Lie G$, and assume that $p\geq h$, the Coxeter number of $G$, 
and that the derived subgroup of $G$ is simply connected. The 
assumption on $p$ implies that $\cN_p(\fg) = \cN(\fg)$, the null 
cone of $\fg$. Finally, we exclude the case when $G$ is of 
type $A_1$ to ensure that $\bE(2, \fg)$ is non-empty.

As shown by A. Premet in \cite{Prem},
$\cC_2(\cN(\fg))$ is equidimensional with irreducible components enumerated by the 
distinguished nilpotent orbits of $\fg$; in particular, 
$\cC_2(\cN(\gl_n))$ is irreducible.  This
easily implies that  $\bE(2,\fg)$ is an equidimensional 
variety, irreducible in the special case
$\fg = \gl_n$.   Since $\dim \bE(2,\fg) = \dim  \cC_2(\cN_p(\fg)) - \dim \GL_2$, 
$\dim \bE(2,\fg) = \dim [G,G] - 4$.   In particular, $\bE
(2,\gl_n)$  has dimension $n^2 - 5$ for $p\geq n$. 
\end{ex}

\begin{ex}
\label{ex:u3}
Assume that $p >2$. 
Let $ \fu_3 \subset \gl_3$ denote the Lie subalgebra of 
strictly upper triangular matrices and 
take $r=2$.  Then a 2-dimensional elementary Lie subalgebra 
$\epsilon \subset \fu_3$ is spanned by 
$E_{1,3}$ and another element $X \in \fu_3$ not a scalar 
multiple of $E_{1,3}$.  We can
further normalize the basis of $\epsilon$ by 
subtracting a multiple of $E_{1,3}$ from $X$,
so that $X = a_{1,2}E_{1,2} + a_{2,3}E_{2,3}$.    
Thus, 2-dimensional elementary Lie subalgebras
$\epsilon \subset \fu$ are parametrized by points 
$\langle a_{1,2},a_{2,3} \rangle 
\in \bP^1$, so that $\bE(2, \fu_3) \simeq \bP^1$.  

In this case, $\fu_3$ is the Lie algebra of the 
unipotent radical of the Borel subgroup 
$B_3 \subset \GL_3$ of upper triangular matrices.  
The adjoint action of $\GL_3$ on $\gl_3$ induces the action 
of $B_3$ on $\bE(2, \fu_3)$ since $B_3$ stabilizes $\fu_3$.
With respect to this action of $B_3$,
$\bE(2, \fu_3)$ is the union of an 
open dense orbit consisting of regular nilpotent
elements of the form $a_{1,2}E_{1,2} + a_{2,3}E_{2,3}$, 
with $a_{1,2} \not= 0 \not= a_{2,3}$;
 and two closed orbits.  The open orbit is isomorphic 
to the 1-dimensional torus $\bG_m \subset \bP^1$
 and the two closed orbits are single points $\{ 0 \}, \{ \infty \}$.
\end{ex}
We thank the referee for the following observation.
\begin{prop}
\label{prop:sub} Let $G$ be a reductive algebraic group, 
let $\fg = \Lie G$ be the Lie algebra of $G$. Let $r$ be the 
Lie rank of $\fg$.  
and assume $p\geq h$, where $h$ the Coxeter number of $G$. Let 
$\epsilon_{reg} \in \bE(r, \fg)$ be an elementary 
subalgebra containing a regular element of $\fg$. Then 
$ G \cdot \epsilon_{reg} \subset \bE(r, \fg)$ is an open orbit.
\end{prop}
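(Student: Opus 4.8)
The plan is to prove that $\bE(r,\fg)$ is smooth at $\epsilon_{reg}$ with Zariski tangent space equal to $T_{\epsilon_{reg}}(G\cdot\epsilon_{reg})$; a dimension count then forces the orbit $G\cdot\epsilon_{reg}$ to contain a Zariski neighbourhood of $\epsilon_{reg}$, and $G$-homogeneity promotes this to openness of the entire orbit.

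First I would identify $\epsilon_{reg}$ explicitly. Let $x\in\epsilon_{reg}$ be a regular element. Since $\epsilon_{reg}$ is elementary, $x\in\cN_p(\fg)=\cN(\fg)$ (using $p\ge h$), so $x$ is a regular nilpotent element and $\dim\fz_\fg(x)=r$. As $\epsilon_{reg}$ is commutative we have $\epsilon_{reg}\subseteq\fz_\fg(x)$, and comparing dimensions gives $\epsilon_{reg}=\fz_\fg(x)=\Ker(\ad x)$. (Geometrically, $G\cdot\epsilon_{reg}=\{\fz_\fg(y):y\in\cN(\fg)\text{ regular}\}$, because $\Ad(g)\fz_\fg(x)=\fz_\fg(\Ad(g)x)$ and the regular nilpotents form a single $G$-orbit; the argument below does not use this.)

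The heart of the matter is the computation of $T_{\epsilon}\bE(r,\fg)$ at $\epsilon:=\epsilon_{reg}$. Here $T_\epsilon\Grass(r,\fg)=\Hom_k(\epsilon,\fg/\epsilon)$, and $T_\epsilon(G\cdot\epsilon)$ is the image of the differential of the orbit map, namely $\{\psi_w:w\in\fg\}$ where $\psi_w(x')=[w,x']\bmod\epsilon$. Let $p_1,\dots,p_r$ generate the invariant algebra $k[\fg]^G$ (a polynomial algebra since $p\ge h$), so that $\cN(\fg)$ is the zero scheme of $p_1,\dots,p_r$; since $x$ is a regular nilpotent, $\cN(\fg)$ is smooth at $x$ with tangent space $\Im(\ad x)$, and therefore the differentials $(dp_i)_x$ are linearly independent and span the $r$-dimensional annihilator $(\Im\ad x)^{\circ}\subset\fg^*$. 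Now take $\phi\in T_\epsilon\bE(r,\fg)$, lift it to $\tilde\phi\colon\epsilon\to\fg$, and form the first-order deformation $\epsilon_t=\langle\,x'+t\tilde\phi(x'):x'\in\epsilon\,\rangle\subset\fg\otimes k[t]/(t^2)$. Because every elementary subalgebra is commutative and (by $p\ge h$) consists of nilpotent elements, $\bE(r,\fg)$ is a closed subscheme of the closed subscheme of $\Grass(r,\fg)$ defined by commutativity of the tautological plane together with the vanishing of $p_1,\dots,p_r$ on it; hence $\epsilon_t$ satisfies both conditions to first order. Commutativity gives $[x',\tilde\phi(y')]=[y',\tilde\phi(x')]$ for all $x',y'\in\epsilon$; putting $y'=x$ yields $\ad(x)\tilde\phi(x')=[x',z_0]$ with $z_0:=\tilde\phi(x)$. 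Vanishing of the $p_i$ along $\epsilon_t$ gives $(dp_i)_{x'}(\tilde\phi(x'))=0$ for all $x'\in\epsilon$; at $x'=x$ this forces $z_0$ to be annihilated by every $(dp_i)_x$, hence by all of $(\Im\ad x)^{\circ}$, so $z_0\in\Im(\ad x)$, say $z_0=[w,x]$. A one-line application of the Jacobi identity (using $[x,x']=0$) gives $\ad(x)\bigl(\tilde\phi(x')+[x',w]\bigr)=0$, so $\tilde\phi(x')\equiv[w,x']\pmod{\Ker(\ad x)=\epsilon}$, i.e. $\phi=\psi_w\in T_\epsilon(G\cdot\epsilon)$. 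Together with the trivial inclusion $T_\epsilon(G\cdot\epsilon)\subseteq T_\epsilon\bE(r,\fg)$ this yields $T_\epsilon\bE(r,\fg)=T_\epsilon(G\cdot\epsilon)$.

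Since $G\cdot\epsilon$ is a smooth locally closed subvariety of $\bE(r,\fg)$, we get $\dim(G\cdot\epsilon)=\dim T_\epsilon(G\cdot\epsilon)=\dim T_\epsilon\bE(r,\fg)\ge\dim_\epsilon\bE(r,\fg)\ge\dim(G\cdot\epsilon)$, so these are all equal; thus $\bE(r,\fg)$ is smooth at $\epsilon$ and, near $\epsilon$, coincides with the unique irreducible component $Z$ through $\epsilon$, with $\dim Z=\dim(G\cdot\epsilon)$. Being irreducible and locally closed of full dimension in $Z$, the orbit $G\cdot\epsilon$ is dense, hence open, in $Z$; as $Z$ contains a neighbourhood of $\epsilon$ in $\bE(r,\fg)$, the orbit contains an open neighbourhood $V$ of $\epsilon$ in $\bE(r,\fg)$, and then $G\cdot V=\bigcup_{g\in G}g\cdot V=G\cdot\epsilon$ is open (a union of opens, $G$ acting by automorphisms). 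The one step that requires genuine care is the tangent space computation — in particular the assertion that the equations cutting out $\bE(r,\fg)$ inside $\Grass(r,\fg)$ admit no first-order solutions at $\epsilon_{reg}$ beyond those produced by the $G$-action, which is exactly where the regularity of $x$ enters, through $\Ker(\ad x)=\epsilon$ and the spanning property of the gradients $(dp_i)_x$.
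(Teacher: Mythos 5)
Your route is genuinely different from the paper's. The paper argues purely set-theoretically: since any linearly independent commuting $r$-tuple of nilpotent elements containing a regular element is conjugate under $G\times\GL_r$ to $(X,X^2,\dots,X^r)$, the complement of $G\cdot\epsilon_{reg}$ in $\bE(r,\fg)$ is exactly the image of $\cC_r(Z)^\circ$, where $Z\subset\cN_p(\fg)$ is the closed set of non-regular nilpotent elements; as $\cC_r(Z)^\circ$ is closed and $\GL_r$-stable and $\cC_r(\cN_p(\fg))^\circ\to\bE(r,\fg)$ is a $\GL_r$-torsor (diagram \eqref{sq}), the complement is closed and the orbit is open. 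Your infinitesimal argument, when it applies, buys more: it shows $T_{\epsilon}\bE(r,\fg)$ equals the image of the differential of the orbit map, hence that $\bE(r,\fg)$ is smooth along the orbit and that the orbit map is separable there --- conclusions the paper only obtains elsewhere and under stronger bounds on $p$ (cf.\ Remark \ref{rem:sep}). The price is that you import the Kostant--Veldkamp theory of the adjoint quotient, whereas the paper's proof needs only the single-orbit observation and the torsor diagram.

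Two steps need repair. First, the inference ``$\cN(\fg)$ is smooth at $x$ with tangent space $\Im(\ad x)$, therefore the $(dp_i)_x$ are linearly independent'' is not valid as stated: smoothness of the reduced variety says nothing about the Jacobian of a particular chosen set of equations (the scheme cut out by $p_1,\dots,p_r$ could a priori be non-reduced at $x$). What you need is the differential criterion for regularity (Kostant in characteristic $0$, Veldkamp et al.\ in characteristic $p$), which is a substantive theorem requiring the ``standard hypotheses'' and not merely $p\geq h$. Second, and relatedly, the identification $\epsilon_{reg}=\fz_\fg(x)=\Ker(\ad x)$ uses $\dim\fz_\fg(x)=r$, which can fail within the stated hypotheses: for $\fg=\fsl_p$ with $p=h$ one has $\dim\fz_\fg(x)=r+1$ (the identity matrix lies in $\fz_\fg(x)\cap\fsl_p$), so your final step only gives $\tilde\phi(x')\equiv[w,x']$ modulo the larger space $\Ker(\ad x)\supsetneq\epsilon$, and the equality of tangent spaces is lost; the same case also breaks the invariant-theoretic input. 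So as written your proof covers the standard (e.g.\ very good $p$) situations but not every group allowed by the proposition, while the paper's argument is insensitive to these issues. Either add the standard hypotheses explicitly and cite the Kostant--Veldkamp criterion, or replace the tangent-space computation at that point by the paper's conjugacy observation.
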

\begin{proof}  Let $X$ be a  regular nilpotent element. 
Recall that the nilpotent part of 
centralizer of $X$ in $\fg$ is generated by $\langle X, X^2, 
\ldots, X^r\rangle$. Hence, there exists 
an elementary algebra $\epsilon_{reg}$ 
of dimension $r$ contaning $X$.  Let $Z$ be the complement of the regular 
nilpotent orbit in $\cN_{p}(\fg) = \cN(\fg)$ (that is, $Z$ is the closure 
of the subregular orbit). Observe that any $r$-tuple 
of nilpotent commuting matrices of $\fg$ containing a {\it regular } nilpotent element 
has to be conjugate to $(X, X^2, 
\ldots, X^r)$ under the action of $G \times \GL_r$. This implies that the diagram 
\eqref{sq} extends as follows:
 \begin{equation*}
 \label{sq2}
\xymatrix{ 
\cC_r(Z)^\circ \ar[d] \ar@{^(->}[r] &\cC_r(\cN_p(\fg))^\circ \ar[d] \ar@{^(->}[r] & \bM_{n,r}^\circ \ar[d]^p\\
\bE(r,\fg) - G \cdot \epsilon_{reg} \ar@{^(->}[r] & \bE(r,\fg)  \ar@{^(->}[r] & \Grass(r, \fg).
} \end{equation*}
Since $\cC_r(Z)^\circ$ is a closed $\GL_r$-stable subset of $\cC_r(\cN_p(\fg))^\circ$, we conclude that $G \cdot \epsilon_{reg}$ is open in $\bE(r, \fg)$. 
\end{proof} 
 
\begin{ex}
\label{ex:grnr}
We consider the algebraic group $G = \GL_n$ and some 
$r, \ 1 \leq r < n$.  Let $\fu_{r,n-r}
\subset \gl_n$ denote the Lie subalgebra of $n \times n$ 
matrices $(a_{i,j})$ with $a_{i,j}=0$ unless 
$1 \leq i \leq r, \ r+1 \leq j \leq n$.  Then 
$\fu_{r,n-r} \subset \gl_n$ is an elementary subalgebra 
of dimension $r(n-r)$.  The argument given in 
\cite[\S 5]{MP} applies in our situation to show 
that $\fu_{r, n-r}$ is a maximal elementary subalgebra 
(that is, not contained in any other elementary subalgebra).

Let $X \subset \bE({r(n-r)}, \gl_n)$ denote
the $\GL_n$-orbit of $\fu_{r,n-r}$ (as defined in Notation~\ref{not:action}).  
Let $P_r$ be the standard parabolic subgroup of $\GL_n$ 
defined by the equations 
$a_{i,j} = 0$ for $i>r, j\leq n-r$.  Since $P_r$ is the 
stabilizer of $\fu_{r,n-r}$ under the adjoint action of $\GL_n$,  
 $X = G\cdot \fu_{r,n-r} \simeq \GL_n/P_r \simeq \Grass(r,n)$.  
Since $X$ is projective, it is a closed $\GL_n$-stable 
subvariety of $ \bE({r(n-r)}, \gl_n)$.  
\end{ex}

We next give examples of $p$-restricted Lie algebras 
which are not the Lie algebras of algebraic groups.

\begin{ex}
\label{hoch}
Let $\phi: \gl_{2n}\to k$ be a semi-linear map (so that $\phi(av) = a^p\phi(v)$),
and consider the extension of $p$-restricted Lie algebras, 
split as an extension of Lie algebras  (see \cite[3.11]{FPar1}):
\begin{equation}
0 \to k \to {\wt \gl}_{2n} \to \gl_{2n} \to 0, \quad (b,x)^{[p]} = (\phi(x),x^{[p]}).
\end{equation}
Then $\bE({n^2+1}, {\wt \gl}_{2n})$ can be identified with the subvariety of $\Grass(n, 2n)$
consisting of those elementary subalgebras $\epsilon \subset \gl_{2n}$ of dimension $n^2$ 
such that the restriction of $\phi$ to $\epsilon$ is 0 (or, equivalently, such that $\epsilon$
is contained in the kernel of $\phi$).
\end{ex}

\begin{ex}
\label{ex:semi} (1). Consider the general linear group 
$\GL_n$ and let $V$ be the defining representation. Let $\bV$ 
be the vector group associated to $V$ as in Remark~\ref{rem:cfp2}. We set 
\begin{equation}
\label{eq:g1n}
\xymatrix@-0.8pc{G_{1,n}\ar@{=}[r]^-{\rm def} & \bV \rtimes \GL_n, & 
g_{1,n}\ar@{=}[r]^-{\rm def} & \Lie G_{1,n}}
\end{equation} 
Any subspace $\epsilon \subset V$ of dimension $r<n$ can be 
considered as an elementary subalgebra of $g_{1,n}$. 
Moreover,  the $G_{1,n}$-orbit  of $\epsilon \in \bE(r,\fg_{1,n})$ 
can be identified with $\Grass(r, V)$. 

(2). More generally, let $H$ be an algebraic group, 
$W$ be a rational representation of $H$, and 
$\bW$ be the vector group associated to $W$. Let 
$G \ \equiv \ \bW \rtimes H$, and let $\fh = \Lie H$. 
A subspace $\epsilon \subset W$ of dimension 
$r<\dim W$ can be viewed as an elementary 
subalgebra of $\fg$. Moreover,   
the $G$-orbit of  $ \epsilon \in \bE(r,\fg)$ can be identified 
with the $H$-orbit of $\epsilon$ in $\Grass(r,W)$.
\end{ex}

We conclude this section by giving a straightforward way to obtain additional computations 
from known computations of $\bE(r,\fg)$.  The proof is immediate.

\begin{prop}
\label{prop:prod}
Let $\fg_1, \fg_2, \ldots, \fg_s$ be finite dimensional 
$p$-restricted Lie algebras and let $\fg \ =
\fg_1 \oplus \cdots \oplus \fg_s$.  Then there is a 
natural morphism of projective varieties
\begin{equation}
\label{Eprod}
\xymatrix{\bE(r_1,\fg_1) \times \cdots \times \bE(r_s, \fg_s) \ar[r]& \bE(r,\fg), \quad r = \sum r_i,}
\end{equation}
sending $(\epsilon_1 \subset \fg_1,\ldots,\epsilon_s \subset \fg_s)$ to 
$\epsilon_1 \oplus \cdots \oplus \epsilon_s \subset \fg$.
Moreover, if $r_i$ is the maximum  of the dimensions of the elementary subalgebras of $\fg_i$
 for each $i, 1 \leq i \leq s$, then this morphism is bijective.
\end{prop}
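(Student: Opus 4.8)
The plan is to prove the two assertions separately: first that the map \eqref{Eprod} is a well-defined morphism of projective varieties, and then that it is bijective when each $r_i$ is maximal.

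For the first part, I would work with the description of $\bE(r,\fg)$ given in Proposition~\ref{embed} as a closed subvariety of $\Grass(r,\fg)$, together with the $\GL_r$-torsor $\cC_r(\cN_p(\fg))^\circ \to \bE(r,\fg)$. The cleanest approach is to lift the map to the level of tuples of commuting $p$-nilpotent elements. Fixing bases, a point of $\bE(r_i,\fg_i)$ is (locally) represented by an $r_i$-tuple in $\cC_{r_i}(\cN_p(\fg_i))^\circ$; concatenating these tuples and viewing each $\fg_i$ inside $\fg = \fg_1 \oplus \cdots \oplus \fg_s$ gives an $r$-tuple of pairwise commuting $p$-nilpotent elements of $\fg$, linearly independent because the $\fg_i$ are independent summands, hence a point of $\cC_r(\cN_p(\fg))^\circ$. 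This concatenation is visibly a morphism of affine varieties $\prod_i \cC_{r_i}(\cN_p(\fg_i))^\circ \to \cC_r(\cN_p(\fg))^\circ$, equivariant for the evident homomorphism $\prod_i \GL_{r_i} \to \GL_r$, so it descends to a morphism on the quotients $\prod_i \bE(r_i,\fg_i) \to \bE(r,\fg)$; one checks on points that the descended map is exactly $(\epsilon_i) \mapsto \bigoplus_i \epsilon_i$. (Alternatively one can argue directly on $\Grass$: the map $\Grass(r_1,\fg_1)\times\cdots\times\Grass(r_s,\fg_s)\to\Grass(r,\fg)$ sending a tuple of planes to their direct sum is a standard morphism, e.g.\ visible on Pl\"ucker coordinates, and \eqref{Eprod} is its restriction to the closed subvariety $\prod\bE(r_i,\fg_i)$, landing in the closed subvariety $\bE(r,\fg)$ by the summand-independence argument.)

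For the bijectivity statement, injectivity is easy: if $\epsilon_1 \oplus \cdots \oplus \epsilon_s = \epsilon_1' \oplus \cdots \oplus \epsilon_s'$ as subspaces of $\fg$, then intersecting with the summand $\fg_i$ recovers $\epsilon_i = \epsilon_i'$ (each $\epsilon_j$ with $j\ne i$ meets $\fg_i$ trivially). Surjectivity is where the maximality hypothesis is used, and this is the main obstacle. Given an elementary subalgebra $\epsilon \subset \fg$ of dimension $r = \sum r_i$, let $\pi_i : \fg \to \fg_i$ be the projections and set $\epsilon_i := \pi_i(\epsilon) \subset \fg_i$. Each $\epsilon_i$ is abelian (image of an abelian subalgebra under a Lie algebra map) and $p$-nilpotent-closed, hence $\epsilon_i$ is elementary, so $\dim \epsilon_i \le r_i$. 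On the other hand $\epsilon \subset \epsilon_1 \oplus \cdots \oplus \epsilon_s$, so $\sum_i \dim \epsilon_i \ge r = \sum_i r_i$; combining these forces $\dim \epsilon_i = r_i$ for all $i$ and $\epsilon = \epsilon_1 \oplus \cdots \oplus \epsilon_s$. Thus $\epsilon$ is in the image.

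Finally, since source and target are projective varieties, a bijective morphism need not be an isomorphism in general, but the proposition only claims bijectivity, so no further argument (such as checking the map is radicial, or that $\bE(r,\fg)$ is reduced, which is already known from Proposition~\ref{embed}) is required. The only genuinely delicate point to verify carefully is that each projected piece $\epsilon_i = \pi_i(\epsilon)$ is closed under the $p$-restriction of $\fg_i$: this holds because $\pi_i$ is a homomorphism of restricted Lie algebras (the $p$-operation on $\fg = \bigoplus \fg_j$ is componentwise), and $\epsilon$ being $p$-trivial gives $\pi_i(\epsilon)^{[p]} \subset \pi_i(\epsilon^{[p]}) = 0$, so in fact $\epsilon_i$ is not merely $p$-closed but $p$-trivial, as needed.
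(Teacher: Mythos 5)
Your argument is correct, and it fills in exactly the details the paper itself omits (the paper simply declares ``The proof is immediate'' before the statement of Proposition \ref{prop:prod}): the descent of the concatenation map through the $\GL$-torsors of Proposition \ref{embed} (or equivalently the Pl\"ucker/Segre description of the direct-sum map on Grassmannians) gives the morphism, and your projection argument $\epsilon_i = \pi_i(\epsilon)$, with maximality forcing $\dim\epsilon_i = r_i$ and $\epsilon = \epsilon_1\oplus\cdots\oplus\epsilon_s$, is the intended route to bijectivity. No gaps.
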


\begin{cor}
\label{cor:prod-sl2}
In the special case of Proposition \ref{prop:prod} in which each $\fg_i \simeq \fsl_2$, 
$r_1= \cdots = r_s=1$, (\ref{Eprod}) specializes to
$$(\bP^1)^{\times r} \ \simeq \ \bE(r,\fsl_2^{\oplus r}).$$
\end{cor}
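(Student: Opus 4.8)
The plan is to apply Proposition~\ref{prop:prod} directly and then identify $\bE(1,\fsl_2)$ with $\bP^1$. First I would recall that a one-dimensional elementary subalgebra of $\fsl_2$ is nothing but a line through a nonzero $p$-nilpotent element, so $\bE(1,\fsl_2) \simeq \Proj k[\cN_p(\fsl_2)]$ by Example~\ref{ex:r=1}. For $p > 2$ the $p$-nilpotent cone of $\fsl_2$ is the ordinary nilpotent cone, a two-dimensional quadric cone in $\bA^3$ (the affine variety cut out by $\det = 0$ on trace-zero matrices, equivalently $bc + a^2 = 0$ in coordinates $\left(\begin{smallmatrix} a & b \\ c & -a\end{smallmatrix}\right)$), whose projectivization is a conic in $\bP^2$, hence isomorphic to $\bP^1$. (For $p = 2$ one checks directly that $\cN_p(\fsl_2) = \cN(\fsl_2)$ is still the same cone since $x^{[2]} = x^2$ and $x^2 = -\det(x)\cdot I = 0$ for trace-zero $x$, so the conclusion is unchanged; it is worth a remark that every element of $\fsl_2$ is $p$-nilpotent when $p=2$ is slightly subtle, but the conic description of $\bE(1,\fsl_2)$ persists.) Thus $\bE(1,\fsl_2) \simeq \bP^1$.

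Next I would observe that for $\fg_i \simeq \fsl_2$ the maximal dimension of an elementary subalgebra is $1$: any abelian subalgebra of $\fsl_2$ with trivial $p$-restriction is at most one-dimensional, since two linearly independent commuting elements of $\fsl_2$ would span a two-dimensional abelian subalgebra, forcing $\fsl_2$ to have an abelian subalgebra of codimension $1$, which is impossible as the centralizer of any nonzero element of $\fsl_2$ is exactly that element's span (in the nilpotent case) or a torus (in the semisimple case), both one-dimensional. Hence $r_i = 1$ is indeed the maximum for each factor, and Proposition~\ref{prop:prod} applies to give that the natural morphism
\[
\xymatrix{\bE(1,\fsl_2)^{\times r} \ar[r]& \bE(r, \fsl_2^{\oplus r})}
\]
is bijective.

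Finally I would upgrade bijectivity to an isomorphism of varieties. The morphism in question is a morphism of projective varieties; its source $(\bP^1)^{\times r}$ is smooth, and I claim the target $\bE(r,\fsl_2^{\oplus r})$ is as well (or at least normal), so that a bijective morphism from a variety to a normal variety which is birational — indeed it is generically one-to-one — is an isomorphism by Zariski's main theorem. To see normality of the target one can argue that $\cC_r(\cN_p(\fsl_2^{\oplus r}))^\circ$ is smooth: a tuple of commuting $p$-nilpotent elements of $\fsl_2^{\oplus r}$ projects componentwise, and commuting in $\fsl_2$ is a strong constraint, so the relevant incidence variety is a product of cones minus bad loci; combined with the $\GL_r$-torsor structure from Proposition~\ref{embed}, smoothness descends to $\bE(r,\fsl_2^{\oplus r})$. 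Alternatively, and more cleanly, I would note that the map is an isomorphism already on each chart: decomposing an elementary subalgebra $\epsilon \subset \fsl_2^{\oplus r}$ of dimension $r$ forces $\epsilon$ to meet each summand $\fsl_2$ in a line (a dimension count using that the projections $\epsilon \to \fsl_2$ have images in the one-dimensional $\cN_p$-lines), giving an explicit inverse morphism $\epsilon \mapsto (\pr_1(\epsilon),\ldots,\pr_r(\epsilon))$.

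I expect the main obstacle to be precisely this last point: justifying that the bijective morphism of Proposition~\ref{prop:prod} is an isomorphism rather than merely a bijection, since in positive characteristic bijective morphisms need not be isomorphisms (they can be radicial, like Frobenius). The cleanest route is to write down the inverse map explicitly using the componentwise projections, checking that for a dimension-$r$ elementary $\epsilon \subset \fsl_2^{\oplus r}$ each projection $\epsilon \to \fsl_2$ is injective with image a $p$-nilpotent line — this requires ruling out the possibility that $\epsilon$ "concentrates" more than one dimension in a single $\fsl_2$ factor, which follows because $\fsl_2$ contains no two-dimensional elementary subalgebra (established above). Once the inverse is exhibited as a morphism, the isomorphism $(\bP^1)^{\times r} \simeq \bE(r,\fsl_2^{\oplus r})$ follows formally.
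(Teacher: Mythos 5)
Your proposal is correct, and its core coincides with the paper's one-line proof: identify $\bE(1,\fsl_2) \simeq \bP^1$ (Example~\ref{ex:r=1}) and apply Proposition~\ref{prop:prod}, using that $1$ is the maximal dimension of an elementary subalgebra of $\fsl_2$. Where you differ is that the paper stops at the bijectivity furnished by Proposition~\ref{prop:prod}, whereas you upgrade this to an isomorphism of varieties; your explicit inverse $\epsilon \mapsto (\pr_1(\epsilon),\ldots,\pr_r(\epsilon))$ is the right mechanism, and the supporting dimension count (each $\pr_i(\epsilon)$ is an elementary subalgebra of $\fsl_2$, hence of dimension at most $1$, while $\epsilon \hookrightarrow \bigoplus_i \pr_i(\epsilon)$ forces each projection to be exactly a line and $\epsilon$ to split as the direct sum of these lines) does show the inverse is everywhere defined and is a morphism on each standard chart. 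This is a genuine addition in the spirit of Remark~\ref{rem:sep}, since in general the paper is careful to distinguish bijective (radicial) morphisms from isomorphisms.

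Two corrections to details of your write-up. First, your Zariski-main-theorem route is not valid as phrased: in characteristic $p$ a bijective, even generically one-to-one, morphism need not be birational (Frobenius on $\bP^1$ is bijective with smooth source and target but is not an isomorphism), so ``bijective onto a normal target'' proves nothing by itself; drop that route and rely on the explicit inverse, which is correct. Second, your maximality argument via centralizers fails at $p=2$: there the scalar matrices lie in $\fsl_2$, the centralizer of a nilpotent element is two-dimensional, and the span of $E_{1,2}$ and the identity is an abelian subalgebra of codimension one. The needed statement still holds for all $p$ because an elementary subalgebra consists of $p$-nilpotent elements and the quadric cone $\cN_p(\fsl_2)$ contains no $2$-plane (its projectivization is an irreducible conic, which contains no line); phrase the maximality argument that way if you want it uniform in $p$.
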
 
\begin{proof} This follows from the fact that 
$\bE(1, \fsl_2) = \Proj k[\cN(\fsl_2)] \simeq \bP^1$ 
(see, for example, \cite{FP3}). 
\end{proof}


\section{Elementary subalgebras of maximal dimension}
\label{sec:max}

The study of maximal abelian subalgebras in complex 
semi-simple Lie algebras has a long history,
dating back at least to the work of Schur in the general 
linear case at the turn of last century \cite{Sch05}. 
The dimensions of maximal abelian subalgebras of a complex 
simple Lie algebra are 
known thanks to the classical work of Malcev \cite{Mal51}.  
Malcev's arguments apply to the positive characteristic case with little 
modification showing that the maximal dimensions he determined 
also give maximal dimensions of elementary subalgebras of simple 
Lie algebras of types A, B, C, D, E, F, G at least for $p$ good. In this paper, 
we reproduce this calculation for types A and C.
   
As pointed out to us by S. Mitchell,
our investigation of Lie algebras over
fields of positive characteristic is closely related to 
the study  Barry \cite{Bar79} who considered the
analogous problem of identifying maximal elementary 
abelian subgroups of Chevalley groups.
Subsequent work by Milgram and Priddy \cite{MP} in the case of 
the general linear groups 
guided some of our calculations.

The reader finds below consideration of  
$\bE(r,\fg)$ for several families of $p$-restricted 
Lie algebras $\fg$ and $r$ the maximal dimension of 
an elementary subalgebra of $\fg$. 
\begin{itemize}
\item
Heisenberg Lie algebras (Proposition~\ref{prop:extra-spec})
\item
The general linear Lie algebra  $\gl_n$ (Theorems~\ref{thm:sl2m} and \ref{thm:sl2m+1}).
\item
The symplectic Lie algebra $\fsp_{2n}$. (Theorem \ref{thm:sp2n}).
\item
The Lie algebra of a maximal parabolic of $\gl_n$ (Theorem \ref{prop:p1}).
\item
The Lie algebras of Example~\ref{ex:semi}(1) (Corollary \ref{cor:g1n}).
\end{itemize}

In what follows, we consider a connected reductive algebraic group $G$ over $k$.
We choose a Borel subgroup $B = U \cdot T \ \subset \ G$, thereby fixing a basis
of simple roots $\Delta \subset \Phi$ and the subset of positive roots $\Phi^+$.  
For a simple root $\alpha \in \Delta$, we denote 
by $P_\alpha$, $\fp_\alpha$, the corresponding standard maximal 
parabolic subgroup and its Lie algebra.   We write 
$$\fp_\alpha \ = \ \fh \oplus \sum_{\beta \in \Phi_I^- \cup \Phi^+} kx_\beta,$$ where 
$x_\beta$ is the root vector corresponding to the root $\beta$ and $\Phi_I$ is the root 
subsystem generated by the subset $\Delta \backslash \{ \alpha \}$.
We follow the convention in \cite[ch.6]{Bour} in the 
numbering of simple roots.  For $\fg = \Lie(G)$
we denote by $\fh \subset \fg$ the Cartan algebra given by $\fh = \Lie(T)$ and write
$\fg \ = \fn^- \oplus \fh \oplus \fn$, the standard triangular decomposition.

We begin by recalling the explicit nature of 
the {\it Heisenberg Lie algebras} which 
not only constitutes our first example but also 
reappear in the inductive analysis of other
examples. 

\begin{defn}  A ($p$-)restricted Lie algebra $\fg$ is a Heisenberg 
restricted Lie algebra if the center $\fz$  of $\fg$ 
is one dimensional, $\fg/\fz$ is an elementary Lie algebra
and if the $p$-power operation vanishes on $\fg$. 
\end{defn}

The requirement that the $p$-restriction map vanish on a
Heisenberg algebra means that only example in the case that 
$p =2$ is the trivial example: $\fg = \fz$.  More generally,  if 
$p= 2$ then any restricted Lie algebra with vanishing 
restriction map is an elementary algebra.  


Let $\fg$ be a Heisenberg restricted Lie algebra. Then $\fg$ admits a basis 
\begin{equation} 
\label{eq:basis}
\{x_1, \ldots x_{n-1}, y_1, \ldots y_{n-1}, y_n\}
\end{equation} 
such that $y_n$ generates the one dimensional center $\fz$ of $\fg$ and 
$$[x_i, x_j] = [y_i,y_j]=0, 
\quad [x_i, y_j] = \delta_{i,j}y_n \quad 1\leq i,j \leq n-1.$$
\noindent Let $W = \fg/\fz$, let 
$\phi: \fg \to W$ be the projection map, and let 
$\sigma: W \to \fg$ be a $k$-linear right splitting of $\phi$. 
For $x,y \in W$,   let $\langle x, y \rangle$ be the coefficient 
of $y_n$ in $[\sigma(x), \sigma(y)]  \in \fz = ky_n$.   
So defined, $\langle -, - \rangle$ gives  $W$ a symplectic 
vector space structure. 

We recall that a subspace $L$ of a symplectic vector space $W$  is said to be 
Lagrangian if $L$ is an isotropic subspace (i.e., 
if the pairing of any two elements of $L$ is 0) 
of maximal dimension. We denote by $\LG(n, W)$ 
the {\it Lagrangian Grassmannian}  of $W$, 
the homogeneous space parameterizing the Lagrangian 
subspaces of $W$. Note that, if $L$ is a Lagrangian subspace of 
$W = \fg/\fz$, for $\fg$ and $\fz$ as in the previous paragraph,
then the inverse image $\phi^{-1}(L) \subseteq \fg$ is an elementary
Lie algebra. 

\begin{prop}
\label{prop:extra-spec} 
Let $\fg$ be a Heisenberg restricted Lie algebra of 
dimension $2n-1$.
Equip $W =  \fg/\fz$ with the symplectic form as above.
\begin{enumerate} 
\item The maximal dimension of an elementary subalgebra of $\fg$ is $n$.
\item $\bE(n, \fg)\ \simeq \ \LG(n-1, W)$.
\end{enumerate}
\end{prop}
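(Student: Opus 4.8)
The plan is to reduce both statements to a linear-algebra fact about the symplectic space $W = \fg/\fz$, namely that the isotropic subspaces of $W$ of maximal dimension $n-1$ are exactly the Lagrangians, and that every isotropic subspace of $W$ is contained in a Lagrangian. First I would set up the correspondence between elementary subalgebras of $\fg$ and isotropic subspaces of $W$. Given an elementary subalgebra $\epsilon \subset \fg$, its image $\phi(\epsilon) \subset W$ is isotropic: for $x, y \in \epsilon$ we have $[\sigma(\phi(x)), \sigma(\phi(y))]$ differs from $[x,y] = 0$ only by an element of $\fz$ coming from the difference $\sigma\phi - \id$, which lands in $\fz$, hence contributes nothing to the bracket since $\fz$ is central; so $\langle \phi(x), \phi(y)\rangle = 0$. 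Conversely, as already noted in the excerpt, if $L \subset W$ is isotropic then $\phi^{-1}(L) \subseteq \fg$ is elementary (abelian because the bracket lands in $\fz$ and the form vanishes on $L$; $p$-restriction zero because it vanishes on all of $\fg$ by hypothesis).

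Next I would pin down dimensions. If $\epsilon$ is elementary, either $\fz \subset \epsilon$ or $\fz \cap \epsilon = 0$. In the first case $\dim \epsilon = \dim \phi(\epsilon) + 1$ and $\phi(\epsilon)$ is isotropic of dimension $\le n-1$, so $\dim\epsilon \le n$. In the second case $\phi|_\epsilon$ is injective, so $\dim \epsilon = \dim \phi(\epsilon) \le n-1 < n$. Moreover, for any isotropic $L \subset W$ the subalgebra $\phi^{-1}(L)$ has dimension $\dim L + 1$, and taking $L$ Lagrangian (dimension $n-1$) gives an elementary subalgebra of dimension $n$. This proves part (1): the maximal dimension is exactly $n$.

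For part (2), the previous paragraph shows that every elementary subalgebra $\epsilon$ of dimension $n$ must fall into the first case, i.e. $\fz \subset \epsilon$, and then $\phi(\epsilon)$ is a Lagrangian subspace of $W$; conversely every Lagrangian $L$ gives $\phi^{-1}(L) \in \bE(n,\fg)$. The assignments $\epsilon \mapsto \phi(\epsilon)$ and $L \mapsto \phi^{-1}(L)$ are mutually inverse bijections between the point sets of $\bE(n,\fg)$ and $\LG(n-1,W)$ (note $W$ has dimension $2(n-1)$, so $\LG(n-1,W)$ is the Lagrangian Grassmannian of an $(n-1)$-dimensional Lagrangian in a $2(n-1)$-dimensional space, consistent with the notation). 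It remains to upgrade this to an isomorphism of varieties. The cleanest route is to observe that the map $\LG(n-1,W) \hookrightarrow \Grass(n-1,W)$ is a closed immersion and that sending $L$ to $\phi^{-1}(L)$ defines a morphism $\Grass(n-1,W) \to \Grass(n,\fg)$ (it is the morphism induced functorially by the surjection $\fg \twoheadrightarrow W$, or explicitly: over a standard affine chart it is given by adjoining to a representing matrix an extra column spanning $\fz$ and lifting, which is polynomial in the Plücker/matrix coordinates), whose composite with $\LG(n-1,W) \hookrightarrow \Grass(n-1,W)$ lands scheme-theoretically in the closed subvariety $\bE(n,\fg) \subset \Grass(n,\fg)$ of Proposition~\ref{embed}; its inverse is the restriction of the functorial morphism $\Grass(n,\fg)\dashrightarrow \Grass(n-1,W)$ induced by $\phi$ (defined wherever the $n$-plane is not contained in $\ker\phi$, which holds on all of $\bE(n,\fg)$ since every such plane contains $\fz \not\subset \ker\phi$... wait, $\fz \subset \ker\phi$; rather the point is that an $n$-plane in $\bE(n,\fg)$ maps onto its $(n-1)$-dimensional image, so $\phi$ restricted to it has rank $n-1$, which is the open condition needed). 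Checking that these two morphisms are mutually inverse is then a formality on the underlying reduced varieties, since we already know they are inverse bijections on points and everything in sight is reduced.

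The main obstacle is the last step: verifying that the set-theoretic bijection is a morphism in both directions with a well-defined scheme structure, i.e. carefully identifying the incidence-variety morphisms $\Grass(r,\fg) \dashrightarrow \Grass(r\mp 1, W)$ attached to the exact sequence $0 \to \fz \to \fg \to W \to 0$ and confirming that they carry $\bE(n,\fg)$ to $\LG(n-1,W)$ and back. This is routine but must be done with the affine charts $U_\Sigma$ of Proposition~\ref{note} in hand; the symplectic condition cutting out $\LG(n-1,W)$ inside $\Grass(n-1,W)$ and the "abelian + isotropic" condition cutting out $\bE(n,\fg)$ inside $\Grass(n,\fg)$ match up under these charts because the bracket on $\fg$ is, modulo $\fz$, exactly the symplectic form. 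One should also remark at the outset on the degenerate case $p=2$, where the Heisenberg hypothesis forces $\fg = \fz$ and $n=1$, so $W = 0$, $\bE(1,\fg) = \mathrm{pt} = \LG(0,0)$ trivially; and more generally note $n \ge 2$ is the interesting range.
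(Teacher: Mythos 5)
Your argument is correct and follows essentially the same route as the paper: the image $\phi(\epsilon)\subset W$ of an elementary subalgebra is isotropic, hence $\dim\phi(\epsilon)\le n-1$ and $\dim\epsilon\le n$, with equality exactly when $\fz\subset\epsilon$ and $\phi(\epsilon)$ is Lagrangian, which gives the identification $\bE(n,\fg)\simeq\LG(n-1,W)$. Your additional discussion of the variety structure goes beyond what the paper records (it stops at this identification); the only slip there is that the relevant condition is the \emph{closed} one $\fz\subseteq\epsilon$ --- i.e.\ $\bE(n,\fg)$ lies in the closed subvariety of $n$-planes containing $\fz$, which is canonically $\Grass(n-1,W)$ --- rather than an open rank condition, and this makes your morphism argument go through cleanly.
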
 

\begin{proof} 
Let $\phi: \fg \to W = \fg/\fz$ be the projection map.  
Observe that if a subalgebra $\epsilon$ of $\fg$ is 
elementary then $\phi(\epsilon)$ is an isotropic linear subspace  of $W$.  
Since $\dim \phi(\epsilon) + \dim \phi(\epsilon)^\perp = \dim W$ (where $\phi(\epsilon)^\perp$ 
denotes the orthogonal complement with respect to the symplectic form) and 
$\phi(\epsilon) \subset \phi(\epsilon)^\perp$ since $\phi(\epsilon)$ is isotropic, 
we get that $\dim \phi(\epsilon) \leq (\dim W)/2 
= n-1$, and, consequently, $\dim \epsilon \leq n$.  
Moreover, the equality holds if and only if 
$\epsilon/\fz$ is a Lagrangian subspace of $W$. 
Hence, $\bE(n, \fg) \simeq \LG(n-1, W)$.
\end{proof}
 
\begin{ex}
\label{ex:extrasp} 
We give various Lie-theoretic contexts in which
the Heisenberg Lie algebras arise. In every case, assume that $p > 2$.

\begin{enumerate} \item 
Let $\fg = \fsl_{n+1}$, and 
let $\fp_J \subset \fg$ be the standard parabolic 
subalgebra defined by the subset 
$J = \{\alpha_2, \ldots, \alpha_{n-1}\}$ of simple roots, that is, 
$\fp_J = \fh \oplus \bigoplus\limits_{\alpha \in \Phi_J^- \cup \Phi^+} kx_\alpha$, where 
$\Phi_J$ is the root subsystem of $\Phi$ generated by the subset of simple roots $J$. 
Then the unipotent radical 
$\fu_J = \bigoplus\limits_{\alpha \in \Phi^+\backslash \Phi_J^+} kx_\alpha$ of $\fp_J$ is a Heisenberg restricted 
Lie algebra of dimension $2n-1$. 
In matrix terms, this is the subalgebra of 
strictly  upper triangular matrices with non-zero 
entries in the top row or the rightmost column.   
\item 
Let $\fg = \fsp_{2n}$. Let $\fp = \fp_{\alpha_1}$  be 
the maximal parabolic subalgebra corresponding 
to the simple root $\alpha_1$.   
Let 
$\gamma_n = 2\alpha_1 + \ldots + 2 \alpha_{n-1} + \alpha_n$ be the highest long root, and let 
further 
\begin{equation}
\label{eq:basis2}
 \beta_i = \alpha_1 + \alpha_2 + \ldots + \alpha_i, \quad \gamma_{n-i} = \gamma_n - \beta_i. 
 \end{equation} 
Then $\fu_{\alpha_1}$, the nilpotent radical of  
$\fp_{\alpha_1}$ is a Heisenberg Lie algebra, and the basis 
$\{x_{\beta_1}, \ldots, x_{\beta_{n-1}}, x_{\gamma_{n-1}}, \ldots, x_{\gamma_1}, x_{\gamma_n}\}$
satisfies the conditions required in (\ref{eq:basis}). 

\item  {\it Type $E_7$.}  Let $\fp = \fp_{\alpha_1}$. 
Then the  nilpotent radical of $\fp$ is a Heisenberg Lie algebra. .
\end{enumerate}
\end{ex} 

\begin{remark}
The referee has pointed out that all of the above 
examples fit into a general pattern. Let $G$ be a simple algebraic group.  
Suppose that $\alpha$ is the positive root of maximum height. 
If $\beta$ is any other positive root, then $(\beta, \alpha^{\vee})$ is one of 
~0, ~1 or ~2, and has value ~2 if and only if $\beta = \alpha$. 
Then the direct sum of the root subspaces of $\fg = \Lie(G)$ 
spanned by $x_\beta$ with $(\beta, \alpha^{\vee}) >0$ is a Heisenberg
restricted Lie algebra provided $p>2$.
\end{remark}

The following well known property of parabolic 
subgroups is used frequently.

\begin{lemma}
\label{lem:com} 
Let $G$ be a simple algebraic group  and  $P$  be a standard 
parabolic subgroup of $G$. Let $\fp = \Lie(P)$ 
and $\fu$ be the nilpotent radical of $\fp$. 
Unless $G$ is of type $A_1$ and $p=2$, we have $[\fu, \fp] \ = \fu$. 
\end{lemma}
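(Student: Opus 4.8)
The plan is to reduce the identity $[\fu,\fp]=\fu$ to a purely combinatorial statement about roots, using the fact that $\fu$ is spanned by the root vectors $x_\beta$ for $\beta\in\Phi^+\setminus\Phi_I^+$ (where $P=P_I$ is the standard parabolic attached to $I\subseteq\Delta$), that $\fp$ contains all of $\fn^-_I\oplus\fh\oplus\fn$, and the Chevalley commutator formula $[x_\beta,x_\gamma]$ is a nonzero multiple of $x_{\beta+\gamma}$ whenever $\beta+\gamma\in\Phi$ and $[\fh,x_\beta]$ spans $kx_\beta$ for generic $\fh$. Since $[\fp,\fu]\subseteq\fu$ always holds (the nilpotent radical is an ideal of $\fp$), the content is the reverse inclusion $\fu\subseteq[\fu,\fp]$.

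First I would treat the generic abelian-like obstruction: for a root $\beta\in\Phi^+\setminus\Phi_I^+$, we want $x_\beta\in[\fu,\fp]$. The cheap move is $[\,h,x_\beta\,]=\beta(h)\,x_\beta$ for $h\in\fh\subset\fp$; this produces $x_\beta$ provided $\beta$ is not identically zero on $\fh$ after reduction mod $p$, i.e. provided $\beta\neq 0$ in $\fh^*$. This fails only in small-characteristic degeneracies (e.g. when $p$ divides relevant structure constants), so the bulk of the argument in good/most characteristics is immediate. The subtlety the lemma is flagging by excluding $(A_1,p=2)$ is precisely that $[\fh,\fu]$ may not recover all of $\fu$; in type $A_1$ with $P=B$, $\fu=kx_\alpha$, and $\alpha(h)=2\cdot(\text{coordinate})$ which vanishes mod $2$, while there is no positive root to add to $\alpha$ — so $[\fu,\fp]=0\neq\fu$.

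Next, for the cases where $[\fh,x_\beta]$ degenerates, I would use the root-addition mechanism: write $x_\beta=c^{-1}[x_{\beta-\delta},x_\delta]$ for a suitable simple (or positive) root $\delta$ with $\beta-\delta\in\Phi^+\setminus\Phi_I^+$ and $x_\delta\in\fp$, using the Chevalley relation $N_{\beta-\delta,\delta}\neq 0$. One needs to check: (i) such a decomposition exists for every $\beta\in\Phi^+\setminus\Phi_I^+$ whenever $G$ is not of type $A_1$ — this is a standard fact about positive root systems, that every non-simple positive root is a sum of a positive root and a simple root, and for simple $\beta$ one can instead go "up" inside $\fu$ or use $\fn^-$; and (ii) that the relevant structure constant $N_{\beta-\delta,\delta}$ is nonzero in $k$, which holds because $|N_{\beta-\delta,\delta}|\le 3 < p$ once we are not in a handful of small cases, and the excluded case $(A_1,p=2)$ plus the hypothesis that $G$ is simple handles the rest. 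Assembling these, every $x_\beta$ with $\beta\in\Phi^+\setminus\Phi_I^+$ lies in $[\fu,\fp]$, hence $\fu=[\fu,\fp]$.

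The main obstacle is the careful bookkeeping in the second and third steps for \emph{all} simple types simultaneously: one must ensure that for each $\beta\in\Phi^+\setminus\Phi_I^+$, either $\beta$ survives as a nonzero functional on $\fh$ mod $p$, or $\beta$ admits a decomposition $\beta=\gamma+\delta$ with $\gamma\in\Phi^+\setminus\Phi_I^+$, $x_\delta\in\fp$, and nonzero structure constant in characteristic $p$. I expect the clean way to organize this is: observe that $\fp\supseteq\fb$ (the Borel), so it suffices to prove the statement for $P=B$, i.e. $[\fu,\fb]=\fu$ where $\fu=\fn$ — because $[\fn,\fb]\subseteq[\fu,\fp]\subseteq\fu$ once $\fn\subseteq\fu$... but $\fn\not\subseteq\fu$ in general, so instead I would argue directly that $[\fu,\fp]\supseteq[\fu,\fn^-_I\oplus\fh\oplus\fn]$ and note that the "top layer" of $\fu$ (roots of maximal $I$-degree) is hit by brackets with $\fn$ moving up, while lower layers are hit by brackets with $\fn^-_I$ or $\fh$; running an induction on the $I$-height of $\beta$ closes the argument. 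The one genuinely irreducible-by-hand check is the collection of rank-one and rank-two subsystems where structure constants can hit $p=2$ or $p=3$, and confirming the exclusion $(A_1,p=2)$ is the only failure.
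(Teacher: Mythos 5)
Your central mechanism is the same one the paper uses: produce root vectors of $\fu$ from $[\fh,\fu]$ via $[h,x_\gamma]=\gamma(h)x_\gamma$, with the $(A_1,p=2)$ exclusion explained exactly as you explain it. But the paper's argument is much leaner than your plan: after noting $[\fu,\fp]\subset\fu$, it reduces to showing that each \emph{simple} root vector $x_\alpha\in\fu$ lies in $[\fh,\fu]$, and for such an $\alpha$ it simply picks a simple root $\beta$ whose Cartan entry $\langle\alpha,\beta\rangle$ is nonzero (possible outside type $A_1$), so that $[h_\beta,x_\alpha]$ is a nonzero multiple of $x_\alpha$. No Chevalley commutator formula, no structure constants, and no induction on $I$-height appear.

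The genuine gap in your proposal is that the decisive verification is deferred, and it is not the routine confirmation you suggest. The lemma permits $p=2,3$, so your appeal to $|N_{\beta-\delta,\delta}|\le 3<p$ is unavailable exactly where it is needed, and the rank $\le 2$ check you postpone is where both of your mechanisms can fail at once: for the long simple root $\alpha_2$ of a $B_2=C_2$ subsystem in characteristic $2$, the Cartan pairings $\langle\alpha_2,\alpha_1\rangle=-2$ and $\langle\alpha_2,\alpha_2\rangle=2$ both vanish in $k$, and the structure constant $N_{\alpha_1,\alpha_1+\alpha_2}=\pm 2$ vanishes as well; whether the torus trick can be rescued then depends on $\fh$, hence on the isogeny type (for $\fg=\fsp_4$, $p=2$, $P=B$ a direct matrix computation shows the long-root spaces are reached by neither mechanism). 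So ``confirming that $(A_1,p=2)$ is the only failure'' is the real content, not an afterthought. Two smaller defects: a simple root is never a sum of two positive roots, so ``going up inside $\fu$'' cannot produce $x_\beta$ for $\beta$ simple, and using $\fn^-$ requires $\delta\in I$ together with yet another nonvanishing structure constant; also your initial reduction to $P=B$ is abandoned mid-argument. Following the paper, you can discard the commutator machinery entirely: justify the reduction to the simple root vectors lying in $\fu$ and then make the single Cartan-matrix observation.
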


\begin{proof}
Since $\fu$ is a  Lie ideal in 
$\fp$, we have $[\fu, \fp]\subset \fu$. For the opposite inclusion, 
it suffices to show that for any simple root $\alpha$ such that 
$x_\alpha \in \fu$, we have $x_\alpha \in [\fh, \fu]$. Except for the 
situation excluded in the statement of the lemma, we can always find a 
simple root $\beta$ such that the entry $\langle \alpha, \beta \rangle$ of 
the Cartan matrix of $\fg$ is non-zero. Hence, $[h_\beta, x_\alpha]$ is a non-zero
multiple of $x_\alpha$, and we conclude that $\fu \in [\fh, \fu]$. 
\end{proof} 

In the examples that follow, the closed subvariety $\bE(r,\fg) \  \subset \ \Grass(r,\fg)$
is a single orbit or a disjoint union of two orbits for $G$. Such an orbit $G \cdot \epsilon$ 
can be described set-theoretically via the orbit map $\pi:  G \to \bE(r,\fg)$, $ g \mapsto g \cdot \epsilon$.
In order to use this observation to identify $\bE(r,\fg)$ as a homogenous
space $G/\Stab_G(\epsilon)$ (or a disjoint union of two homogeneous spaces), 
we need to know that the orbit map is separable. The following remark addresses this issue.

\begin{remark}
\label{rem:sep} 
Let $G$ be an algebraic group and  $X$ be a $G$-variety, both defined over an algebraically closed field $k$. 
For $x \in X$, the orbit map $\pi_x:G\to G \cdot x \subset X$ determines a homeomorphism
$\overline \pi_x: G/G_x \to G\cdot x$ where $G_x$ is the (reduced) stabilizer of $x$.
This is an isomorphism of varieties if the map $\pi_x$ is separable (equivalently, if 
the tangent map $d\pi_x$ at the identity is surjective).  In \cite[3.7]{CFP4}  we 
show that when $p > 2h-2$ where $h$ is the Coxeter number of a semi-simple algebraic 
group $G$, the orbit map  $G \to G \cdot \epsilon \subset \Grass(r, \fg)$ under 
the adjoint action of $G$ on $\Grass(r, \fg)$ is separable. This implies that the homeomorphisms 
of \eqref{thm:sl2m}(3), \eqref{thm:sl2m+1}(3) and \eqref{cor:gln} are isomorphisms of varieties  
at least when $p>2n-2$; and that the homeomorphism of Theorem~\ref{thm:sp2n}  is an isomorphism 
at least for $p>4n-2$. 

We point out that in a forthcoming paper \cite{PS}, the authors show that the orbit map  
$G \to G \cdot \epsilon \subset \Grass(r, \fg)$ is always separable in types A, B, C, D removing 
the restriction on $p$. Hence, the maps in \eqref{thm:sl2m}(3), \eqref{thm:sl2m+1}(3) and 
\eqref{cor:gln} are, in fact, isomorphisms for any $p$.   
\end{remark}

We consider the special linear Lie algebra $\fsl_n=\Lie(\SL_n)$ in two parallel theorems, one for $n$ even and the other for $n$ odd.  
We denote by $\fu_n \ = \ \Lie(U)$ the nilpotent radical of the Borel subalgebra $\fb = \Lie(B)$. We also use the 
notation $P_{r,n-r}$, $\fp_{r,n-r}$, and $\fu_{r,n-r}$ for the  
maximal parabolic corresponding to the simple root $\alpha_r$, its Lie algebra, 
and its nilpotent radical.

The first parts of both Theorem \ref{thm:sl2m} and Theorem \ref{thm:sl2m+1}
 are well-known in the context of maximal elementary abelian 
subgroups in $\GL_n(\mathbb F_p)$ (see, for example, \cite{Goo} or \cite{MP}).  
We use the approach of \cite{MP} to compute conjugacy classes.  

\begin{thm}
\label{thm:sl2m}
  Assume $p>2$, and  $m \geq 1$.  
\begin{enumerate} 
\item The maximal dimension of an elementary abelian subalgebra  of $\fsl_{2m}$  is $m^2$. 
\item Any elementary abelian subalgebra  of dimension $m^2$ is conjugate to $\fu_{m,m}$, the nilpotent  
radical of the standard maximal parabolic $P_{m,m}$. 
\item There is a finite, radicial morphism  $\xymatrix@=12pt{\Grass(m,2m) \ar[r]& \bE({m^2}, \fsl_{2m})}$, inducing a 
homeomorphism on Zariski spaces; this morphism is an isomorphism if $p > 4m-2$.
\end{enumerate}
\end{thm}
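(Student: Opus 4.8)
The plan is to follow the strategy of \cite{MP}: reduce statements (1) and (2) to a linear-algebra computation about spaces of commuting nilpotent matrices, then build the morphism in (3) explicitly and analyze its fibers. First I would set up the basic reduction. An elementary subalgebra $\epsilon \subset \fsl_{2m}$ is the same as an $r$-dimensional subspace of the nilpotent cone on which the Lie bracket vanishes; since every element of $\epsilon$ is nilpotent and they pairwise commute, they can be simultaneously conjugated into $\fu_{2m}$. So the question becomes: what is the maximal dimension of a subspace $\epsilon \subset \fu_{2m}$ of pairwise-commuting matrices, and which such subspaces achieve it. Writing $V = k^{2m}$ and choosing a commuting nilpotent $\epsilon$, one considers the subspace $U = \sum_{x \in \epsilon} \Im(x) \subseteq \ker(x \text{ for all } x)$... more precisely the key observation (Schur's bound, as in \cite{Sch05}, \cite{MP}) is that for a commuting family one can find a subspace $W \subseteq V$ with $\epsilon \cdot V \subseteq W$ and $\epsilon \cdot W = 0$; then $\epsilon$ embeds into $\Hom(V/W, W)$, giving $\dim \epsilon \leq \dim W \cdot (2m - \dim W) \leq m^2$ with equality forcing $\dim W = m$ and $\epsilon = \Hom(V/W, W)$ in full. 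This both gives (1) and shows that any $\epsilon$ of dimension $m^2$ is, after conjugation, exactly $\fu_{m,m}$, which is (2). I would need to be a little careful that the trace-zero condition does not interfere — but $\fu_{m,m}$ consists of strictly block-upper-triangular matrices, hence already lies in $\fsl_{2m}$, so there is no loss.

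For statement (3), the conjugacy result (2) says $\bE(m^2, \fsl_{2m})$ is a single $\SL_{2m}$-orbit, namely $\SL_{2m} \cdot \fu_{m,m} \simeq \SL_{2m}/\Stab$. I would identify the stabilizer: since $\fu_{m,m} = \Hom(V/W_0, W_0)$ for the coordinate $m$-plane $W_0$, an element $g$ stabilizes it iff it preserves $W_0$ (one checks that $\Ad(g)\fu_{m,m} = \fu_{m,m}$ forces $g W_0 = W_0$, because $W_0 = \Im(\fu_{m,m})$ and $\ker(\fu_{m,m}) = W_0$ as well). Hence the stabilizer is the parabolic $P_{m,m}$, and set-theoretically $\bE(m^2,\fsl_{2m})$ is in bijection with $\SL_{2m}/P_{m,m} \simeq \Grass(m,2m)$. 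The map $\Grass(m,2m) \to \bE(m^2,\fsl_{2m})$ sending an $m$-plane $W \subseteq V$ to the elementary subalgebra $\{x : \Im x \subseteq W,\ W \subseteq \ker x\} = \Hom(V/W,W) \subseteq \fsl_{2m}$ is a morphism of varieties (it is the composite of the orbit map with the identification $\Grass(m,2m) \simeq \SL_{2m}/P_{m,m}$, or can be written directly in Plücker/matrix coordinates), it is bijective by the above, $\SL_{2m}$-equivariant, and a morphism of projective varieties between irreducible varieties of the same dimension; a bijective morphism of varieties over an algebraically closed field is finite and radicial (purely inseparable), giving a homeomorphism on Zariski spaces. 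Finally, by Remark~\ref{rem:sep}, when $p > 4m-2 = 2h-2$ (here $h = 2m$ is the Coxeter number of $\SL_{2m}$), the orbit map $\SL_{2m} \to \SL_{2m}\cdot\fu_{m,m} \subseteq \Grass(m^2,\fsl_{2m})$ is separable, so $d\pi$ is surjective and the bijective morphism is an isomorphism.

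The main obstacle I expect is the Schur-type bound in the sharp form needed for (1) and (2): proving not merely $\dim\epsilon \le m^2$ but that equality pins down $\epsilon$ up to conjugacy requires the structural input that a maximal commuting nilpotent space really has the "block" form $\Hom(V/W,W)$, i.e.\ that one cannot do better by mixing several "levels." The cleanest route is to induct on $m$ (or on $\dim V$) in the style of \cite{MP}: pass to $\epsilon|_{V/W}$ and $\epsilon|_W$ for a well-chosen invariant subspace, or use the associated graded with respect to the filtration $0 \subseteq \bigcap_x \ker x \subseteq \dots$, and run the dimension count level by level, with the key inequality $ab \le ((a+b)/2)^2$ controlling each contribution. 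The separability statement in (3) is not an obstacle here since it is imported wholesale from \cite{CFP4} via Remark~\ref{rem:sep}; likewise the finite-radicial conclusion from bijectivity is standard. I would also double-check the edge case $m = 1$: then $\fsl_2$, $\fu_{1,1}$ is one-dimensional, $\Grass(1,2) \simeq \bP^1 \simeq \bE(1,\fsl_2)$, consistent with Corollary~\ref{cor:prod-sl2} and Example~\ref{ex:r=1}.
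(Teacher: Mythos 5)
Your part (3) is essentially sound and close to the paper's own argument (the paper factors the orbit map through $\SL_{2m}/P_{m,m}$, identifies the \emph{reduced} stabilizer of $\fu_{m,m}$ with $P_{m,m}$ by maximality of $P_{m,m}$ among reduced subgroups, and observes that $\SL_{2m}/P_{m,m}\to \SL_{2m}/\wt P$ is a torsor under the infinitesimal group scheme $\wt P/P_{m,m}$, hence finite and radicial; your ``bijective proper morphism over an algebraically closed field is finite and radicial'' shortcut is acceptable, and the $p>4m-2$ case is indeed just Remark~\ref{rem:sep}). The genuine gap is in (1) and (2). Your ``key observation'' --- that for a commuting nilpotent family $\epsilon\subset\gl_{2m}$ one can find a subspace $W$ with $\epsilon\cdot V\subseteq W$ and $\epsilon\cdot W=0$, whence $\epsilon\hookrightarrow\Hom(V/W,W)$ --- is false as stated: already for $\epsilon$ spanned by a regular nilpotent $X$ (or by $X,X^2,\dots$) no such $W$ exists, since $\Im X\not\subseteq\Ker X$. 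Such a $W$ exists exactly when $\epsilon$ sits inside a block algebra of the form $\Hom(V/W,W)$, which for $\epsilon$ of dimension $m^2$ is precisely the content of statement (2); so as written the argument assumes what it must prove. You do flag this as ``the main obstacle,'' but the proposed fix (induct ``in the style of MP,'' filter by kernels, count ``level by level'' with $ab\le((a+b)/2)^2$) is not carried out, and it is exactly here that all the work lies.

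For comparison, the paper's induction has two specific ingredients your sketch does not supply. First, after conjugating $\epsilon$ into $\fu_{2m}$ by Engel's theorem, it uses the extension $0\to\fu_J\to\fu_{2m}\to\fu_{2m-2}\to 0$ with $J=\{\alpha_2,\dots,\alpha_{2m-2}\}$, where $\fu_J$ (top row plus last column) is a \emph{Heisenberg} restricted Lie algebra of dimension $4m-3$; Proposition~\ref{prop:extra-spec} (the Lagrangian bound) gives $\dim(\epsilon\cap\fu_J)\le 2m-1$, and the inductive hypothesis bounds the image in $\fu_{2m-2}$ by $(m-1)^2$, yielding $\dim\epsilon\le m^2$. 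Second, and more delicately, when equality holds the paper runs an explicit bracket computation: commuting a general element of $\epsilon$ (whose middle block $\mathbf A$ ranges over all of $M_{m-1}$, by maximality) against an element of $\epsilon\cap\fu_J$ forces the off-corner row and column entries $\mathbf v_2,\mathbf w_2$ to vanish, so $\epsilon\cap\fu_J\subseteq\fu_{m,m}$, and a further commutator with the corner vectors (which must lie in $\epsilon$ for dimension reasons) kills the remaining stray entries, giving $\epsilon=\fu_{m,m}$. Without an argument of this kind --- some mechanism that converts maximality of $\dim\epsilon$ into the rigidity $\epsilon=\Hom(V/W,W)$ --- statements (1) and (2), and hence the surjectivity and well-definedness of your map in (3), are not established.
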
 

\begin{proof} 

We prove the following statement by induction: any elementary subalgebra of $\fsl_{2m}$ has dimension 
at most $m^2$ and any subalgebra of such dimension  inside the  nilpotent radical $\fu_{2m}$ (the subalgebra 
of strictly upper triangular $2m \times 2m$-matrices) must coincide 
with  $\fu_{m,m}$.  This implies claims (1) and (2) of the theorem. 

The statement is clear for $m=1$.  Assume it is proved for  $m-1$. Let $\epsilon$ be an elementary 
subalgebra of $\fsl_{2m}$. Since $\epsilon$ consists of nilpotent matrices, it can be conjugated into 
upper-triangular form by Engel's theorem. 
Let $J = \{\alpha_2, \ldots, \alpha_{2m-2}\}$ and let $\fu_J$ be the nilpotent radical of the standard 
parabolic $P_J$ determined by $J$.  Since $[\fu_{2m}, \fu_J] \subset \fu_J$, 
this is a  Lie ideal in $\fu_{2m}$. 

We consider extension 
\[\xymatrix{0 \ar[r] & \fu_J \ar[r] & \fu_{2m} \ar[r] & \fu_{2m}/\fu_J \simeq \fu_{2m-2} \ar[r] & 0.}\] 
Pictorially, the Lie algebras can be represented as follows, 
where $\fu_J$ is in the positions marked by $*$ in the first array and 
$\fu_{2m-2}$ is isomorphic to the Lie algebra with the positions marked by $*$ in the second.  
\[
\fu_J: \quad 
\begin{array}{ccccc|c} 
0&*&*& \dots  &*&* \\ 
\hline 
0&0&0& \cdots &0&*\\
0&0&0&  \cdots&0&*\\
.&.&.& \ldots &&*\\
&&&&&* \\
0&0&0& \cdots &0&*\\
0&0&0& \cdots &0&*\\
0&0&0& \cdots &0&0  
\end{array};
\qquad 
\fu_{2m-2}: \quad 
\begin{array}{cccccc|c}
0&0&0&0& \dots  &0&0 \\
\hline
0&0&*&*& \cdots &*&0\\
0&0&0&*&  \cdots&*&0\\
.&.&.&.& \ldots &*&0\\
&&&&&&0 \\
0&0&0&0& \cdots &*&0\\
0&0&0&0& \cdots &0&0\\
0&0&0&0& \cdots &0&0
\end{array}
\]
By induction, the dimension of the projection of $\epsilon$ onto $\fu_{2m-2}$ is at most $(m-1)^2$, and this 
dimension is attained if and only if the image of $\epsilon$ under the projection is the subalgebra of 
$\fu_{2m-2}$ of all  block matrices  of the form 
$\begin{pmatrix} 
0&{\bf A}\\
0&0 
\end{pmatrix}$, where $\bf A$ is an $(m-1)\times(m-1)$ matrix.  Since $\fu_J$ is 
a Heisenberg Lie algebra of dimension $4m-3$ (see Example~\ref{ex:extrasp}(1)), 
Proposition~\ref{prop:extra-spec} implies that the maximal elementary 
subalgebra of $\fu_J$ has dimension $2m-1$.  Hence, $\dim \epsilon \leq (m-1)^2 + 2m-1 = m^2$.   

Now let's assume that $\epsilon$ has the maximal dimension $m^2$ and is upper-triangular. Our goal is to show that $\epsilon = \fu_{m,m}$. 
The argument in the previous paragraph implies that every element in $\epsilon \subset \fsl_{2m}$ has the form 
\begin{equation} 
\label{eq:genform}
\begin{pmatrix} 
0&{\bf v_2}&{\bf v_1}& *\\
0&0&{\bf A}&{\bf w_1}\\
0&0&0 & {\bf w_2}\\
0&0&0 &0
\end{pmatrix}  
\end{equation}
for some ${\bf v_i}, ({\bf w_i})^T \in k^{m-1}$. 

Let $\begin{pmatrix} 
0&{\bf v^\prime_2}&{\bf v^\prime_1}& *\\
0&0&0&{\bf w^\prime_1}\\
0&0&0 & {\bf w^\prime_2}\\
0&0&0 &0
\end{pmatrix}  
$  be an element  in $\epsilon \cap \fu_J$. Taking a bracket of this element with a general element in 
$\epsilon$ of the form as in \eqref{eq:genform}, we get  
\[\begin{pmatrix} 
0&0& {\bf v^\prime_2}{\bf A}& *\\
0&0&0&{\bf A}{\bf w^\prime_2}\\
0&0&0 &0\\
0&0&0 &0
\end{pmatrix}.  
\]
The assumption that $\epsilon$ has maximal dimension $m^2$ implies that for any $m-1 \times m-1$ matrix $A$, 
there is an element in $\epsilon$ of the form \eqref{eq:genform}. Since $\epsilon$ is abelian, we conclude 
that ${\bf v^\prime_2}{\bf A} =0$, ${\bf A}{\bf w^\prime_2}=0$ 
for any ${\bf A} \in M_{m-1}$. 
Hence,  ${\bf v^\prime_2}=0$, ${\bf w^\prime_2}=0$ which implies that $\epsilon \cap \fu_J \subset \fu_{m,m}$.   
Moreover, for the dimension to be maximal, we need $\dim \epsilon \cap \fu_J = 2m-1$. Hence, for 
any ${\bf v_1}, ({\bf w_1})^T \in k^{m-1}$, the matrix 
\[\begin{pmatrix} 
0& 0 &{\bf v_1}& 0\\
0&0& 0 &{\bf w_1}\\
0&0&0 & 0 \\
0&0&0 &0
\end{pmatrix}  
\] is in $\epsilon$. 

It remains to show that for an arbitrary element of $\epsilon$,  necessarily of the form \eqref{eq:genform}, 
we must have  $ {\bf v_2} =0, {\bf w_2} =0$.  We prove this by contradiction.  Suppose $\begin{pmatrix} 
0&{\bf v_2}&{\bf v_1}& *\\
0&0&{\bf A}&{\bf w_1}\\
0&0&0 & {\bf w_2}\\
0&0&0 &0
\end{pmatrix}  
 \in \epsilon$ with ${\bf v_2} \not = 0$. Subtracting a multiple of $\begin{pmatrix} 
0& 0 & 0 & 1\\
0&0& 0 &0\\
0&0&0 & 0 \\
0&0&0 &0
\end{pmatrix}  
$,
 which is necessarily in $\epsilon$, 
 we get that $M = \begin{pmatrix} 
0&{\bf v_2}&{\bf v_1}& 0\\
0&0&{\bf A}&{\bf w_1}\\
0&0&0 & {\bf w_2}\\
0&0&0 &0
\end{pmatrix}$ belongs to  $\epsilon$.  Since ${\bf v_2} \not = 0$, we can find a vector 
$({\bf w_1})^T \in k^{m-1}$ such that ${\bf v_2} \cdot ({\bf w_1})^T \not = 0$.  As observed above, we have $M^\prime = \begin{pmatrix} 
0& 0 & 0 & 0\\
0&0& 0 &({\bf w_1})^T\\
0&0&0 & 0 \\
0&0&0 &0
\end{pmatrix}  
$  in $\epsilon$. Therefore, $[M, M^\prime]$ has a non-trivial entry ${\bf v_2} \cdot ({\bf w_1})^T$ in 
the $(1, 2m)$ spot which contradicts commutativity of $\epsilon$.    Hence, ${\bf v_2}=0$. 
Similarly, ${\bf w_2}=0$.  This finishes the proof of the claim.

To show (3), let $\wt P$ denote the stabilizer of $\fu_{m,m}$  under the adjoint action 
of $\SL_{2m}$, so that $\SL_{2m}/\wt P \simeq \SL_{2m}\cdot \fu_{m,m}$.   By (2) and the fact 
that $P_{m,m}$ normalizes its unipotent radical $U_{m,m}$, and, hence, stabilizes $\fu_{m,m}$, the
orbit map $\SL_{2m} \to  \SL_{2m}\cdot \fu_{m,m} =  \bE(m^2,\fsl_{2m})$ factors as
$\SL_{2m} \to  \SL_{2m}/P_{m,m} \to   \SL_{2m}/\wt P$.
Since $P_{m,m}$ is maximal among (reduced) algebraic subgroups of $\SL_{2m}$, we conclude that
$\wt P_{\red} = P_{m,m}$.  Consequently, we conclude that
$$\Grass(m,2m) = \SL_{2m}/P_{m,m} \  \to \SL_{2m}/\wt P = \bE(m^2,\fsl_{2m})$$
is a torsor for the infinitesimal group scheme $\wt P/P_{m,m}$ and thus is finite and  radicial.  

The second  assertion of (3) (that the map $\Grass(m,2m)  \to  \bE(m^2,\fsl_{2m})$ is an isomorphism for $p > 4m-2$), 
is verified in \cite[3.7]{CFP4} as explained in Remark \ref{rem:sep}. 
\end{proof}

\begin{thm}
\label{thm:sl2m+1}
Assume $m>1$, $p>2$. 
\begin{enumerate} 
\item The maximal dimension of an elementary abelian subalgebra  of $\fsl_{2m+1}$  is $m(m+1)$. 
\item There are two distinct conjugacy classes of such elementary subalgebras, represented 
by $\fu_{m, m+1}$ and $\fu_{m+1,m}$.   
\item There is a finite radicial morphism 
\[\xymatrix@=18pt{\Grass(m,2m+1) \sqcup \Grass(m,2m+1) \ar[r]& \bE(m(m+1),\fsl_{2m+1})}\]
inducing a homeomorphism on Zariski spaces; this morphism is an isomorphism for $p > 4m$.
\end{enumerate} 
\end{thm}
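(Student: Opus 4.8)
The plan is to mimic the structure of the proof of Theorem \ref{thm:sl2m}, adapting the inductive argument to odd size and accounting for the two orbits. First I would set up the induction, asserting: any elementary subalgebra of $\fsl_{2m+1}$ has dimension at most $m(m+1)$, and any such maximal-dimensional subalgebra lying inside the nilradical $\fu_{2m+1}$ of strictly upper-triangular matrices must coincide with either $\fu_{m,m+1}$ or $\fu_{m+1,m}$. By Engel's theorem every elementary subalgebra is $\SL_{2m+1}$-conjugate into $\fu_{2m+1}$, so this claim gives (1) and (2). The base of the induction is $\fsl_3 = \fsl_{2\cdot 1 + 1}$: here $\fu_3$ is the Heisenberg algebra of dimension $3 = 2\cdot 2 - 1$, so by Proposition~\ref{prop:extra-spec} its maximal elementary subalgebras have dimension $2 = 1\cdot 2$, and these are exactly the preimages of Lagrangian (here: arbitrary $1$-dimensional isotropic, i.e. any) lines in the $2$-dimensional symplectic space $\fu_3/\fz$ — one checks directly these are precisely $\fu_{1,2}$ and $\fu_{2,1}$ up to conjugacy, or rather that they form the $\bP^1$ of Example~\ref{ex:u3} containing both as the two $B$-fixed points. (I should be slightly careful: in $\fsl_3$ the two conjugacy classes under $\SL_3$ may actually coincide or the induction may instead naturally start at $m=1$ giving a single $\bP^1$; the theorem requires $m>1$, so I would start the real induction at $m=2$, using $\fsl_3$ only as the ``inner'' building block.)

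For the inductive step, let $J = \{\alpha_2,\ldots,\alpha_{2m-1}\}$ so that $\fu_J \subset \fu_{2m+1}$ is the Lie ideal consisting of matrices supported on the first row and last column, which by Example~\ref{ex:extrasp}(1) is a Heisenberg algebra of dimension $2(2m+1) - 3 = 4m-1$; its maximal elementary subalgebras have dimension $2m$ by Proposition~\ref{prop:extra-spec}. The quotient $\fu_{2m+1}/\fu_J \simeq \fu_{2m-1}$, so by induction the image of an elementary $\epsilon$ in $\fu_{2m-1}$ has dimension at most $m(m-1) = (m-1)m$. Hence $\dim\epsilon \le (m-1)m + 2m = m^2 + m = m(m+1)$, proving the dimension bound. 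Now suppose $\epsilon$ is upper-triangular of maximal dimension $m(m+1)$. Then its image in $\fu_{2m-1}$ has dimension exactly $(m-1)m$ and so, by the inductive characterization, is one of the two maximal classes in $\fu_{2m-1}$ — but inside $\fu_{2m+1}$, after removing the first row and last column, the relevant middle block is a $(2m-1)\times(2m-1)$ block and its maximal elementary subalgebras are conjugate to $\fu_{m-1,m}$ or $\fu_{m,m-1}$. I would show, by a commutator computation exactly parallel to the one in the even case (bracketing a general element of the form displayed in \eqref{eq:genform} — now with block sizes $1, a, b, 1$ where $\{a,b\} = \{m-1,m\}$ in the two cases — against elements of $\epsilon\cap\fu_J$), that the ``corner vectors'' $\mathbf{v}_2, \mathbf{w}_2$ must vanish, forcing $\epsilon$ to be exactly $\fu_{m,m+1}$ or $\fu_{m+1,m}$ according to which of the two inductive cases occurred. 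The fact that these two are genuinely non-conjugate in $\SL_{2m+1}$ follows because conjugation preserves the flag-type invariant (e.g. the dimensions $\dim\epsilon\cdot V$, or the Jordan type data of the associated nilpotent subspace), and $\fu_{m,m+1}$, $\fu_{m+1,m}$ have distinct such invariants when $m \ne m+1$, i.e. always.

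For part (3), I would argue as in Theorem~\ref{thm:sl2m}(3) applied to each orbit separately. Let $\wt P$ be the scheme-theoretic stabilizer of $\fu_{m,m+1}$ under the adjoint action of $\SL_{2m+1}$; since $P_{m,m+1}$ normalizes its unipotent radical $U_{m,m+1}$ it stabilizes $\fu_{m,m+1}$, so $P_{m,m+1} \subseteq \wt P$, and since $P_{m,m+1}$ is a maximal reduced subgroup of $\SL_{2m+1}$ we get $\wt P_{\red} = P_{m,m+1}$; thus $\Grass(m,2m+1) = \SL_{2m+1}/P_{m,m+1} \to \SL_{2m+1}/\wt P = \SL_{2m+1}\cdot\fu_{m,m+1}$ is a torsor under the infinitesimal group scheme $\wt P/P_{m,m+1}$, hence finite and radicial. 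The same applies to $\fu_{m+1,m}$, and by (2) these two orbits are disjoint and cover $\bE(m(m+1),\fsl_{2m+1})$ (closedness of each orbit, as in Example~\ref{ex:grnr}, follows from its being a quotient of the projective variety $\Grass(m,2m+1)$). Taking the disjoint union gives the asserted finite radicial morphism inducing a homeomorphism on Zariski spaces; the improvement to an isomorphism for $p > 4m$ is the separability statement \cite[3.7]{CFP4} recalled in Remark~\ref{rem:sep} (note $2h-2 = 2(2m+1)-2 = 4m$ for $\SL_{2m+1}$). The main obstacle I anticipate is bookkeeping the block-matrix commutator computation cleanly in the odd case so that the two cases $\fu_{m,m+1}$ and $\fu_{m+1,m}$ emerge symmetrically and one sees exactly where the inductive dichotomy propagates; the torsor argument in (3) is then essentially formal given (2).
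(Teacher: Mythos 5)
Your strategy is the same as the paper's: induct via the Heisenberg ideal $\fu_J$ (top row and last column), bound $\dim\epsilon$ by $\dim\pr(\epsilon)+\dim(\epsilon\cap\fu_J)\le (m-1)m+2m$, kill the corner vectors by a bracket computation as in the even case, distinguish $\fu_{m,m+1}$ from $\fu_{m+1,m}$ by an invariant of the standard representation, and deduce (3) by the torsor/radicial argument with the separability bound $p>4m=2h-2$. However, there is a genuine gap at the true base case $m=2$. The inductive characterization you invoke in the step --- ``the image of $\epsilon$ in $\fu_{2m-1}$ is one of the two maximal classes $\fu_{m-1,m}$, $\fu_{m,m-1}$'' --- is false when $2m-1=3$: as you yourself observe, the maximal elementary subalgebras of $\fu_3$ form a $\bP^1$, which besides $\fu_{1,2}$ and $\fu_{2,1}$ contains the one-parameter family spanned by $E_{1,3}$ and $E_{1,2}+xE_{2,3}$ for $x\in k^*$. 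Your proposal never returns to these extra points: the corner-vector computation you sketch only treats the case $\pr(\epsilon)\in\{\fu_{1,2},\fu_{2,1}\}$, so for $m=2$ you have not excluded the possibility that a $6$-dimensional elementary $\epsilon\subset\fu_5$ projects onto a ``diagonal'' member of the family.

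This exclusion is precisely where the paper spends most of its effort: taking a general $A\in\epsilon$ whose image in $\fu_3$ has entries $a,b,xa$ with $x\neq 0$ and bracketing it with an arbitrary $A'\in\epsilon\cap\fu_J$, commutativity forces $a_{12}=a_{13}=a_{35}=a_{45}=0$, whence $\dim(\epsilon\cap\fu_J)\le 3$ and $\dim\epsilon\le 5<6$, so the maximum is not attained in that case. Without this argument, part (2) is unproved for $m=2$; and since the induction for every $m\ge 3$ rests on the $m=2$ classification, the classification for all $m$ --- and hence (3), which needs the two orbits to exhaust $\bE(m(m+1),\fsl_{2m+1})$ --- remains open. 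Only the dimension bound (1) survives your counting, since it uses just the dimension bound at the previous level (which does hold for $\fu_3$). The remaining ingredients of your proposal (the induction step for $m\ge 3$, the non-conjugacy via nullspace dimensions, and the stabilizer/torsor argument for (3)) agree with the paper.
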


\begin{proof} Let $\fu_3$ be the Heisenberg Lie algebra of strictly upper-triangular 
$3\times 3$ matrices. By Proposition~\ref{prop:extra-spec}, $\bE(2, \fu_3) \simeq 
\LG(1, 2) \simeq \bP^1$, and the maximal dimension is $2$.   In the following complete 
list of maximal elementary subalgebras of $\fu_3$, we separate the algebras $\fu_{1,2}$ 
and $\fu_{2,1}$ for easy referencing later in the proof. 
\begin{itemize}
\item[$\bu$]
$\fu_{1,2}=\left\{ \begin{pmatrix} 0&a&b \\0&0&0\\0&0&0  
\end{pmatrix} \, | \, a,b \in k \right\}$, 
\item[$\bu$] $\fu_{2,1}=\left\{ \begin{pmatrix} 0&0&b \\0&0&a\\0&0&0  
\end{pmatrix} \, | \, a,b \in k \right\}$,  
\item[$\bu$] a one-parameter family  
$ \left\{ \begin{pmatrix} 0&a&b \\0&0&xa\\0&0&0  
\end{pmatrix} \, | \, a,b \in k \right\}$ for a fixed  $x \in k^*$.
\end{itemize}
We prove the following statements by induction: For any $m>1$, 
an elementary subalgebra of $\fsl_{2m+1}$ 
has dimension at most $m(m+1)$. 
Any subalgebra of such dimension  inside $\fu_{2m+1}$ must 
coincide either with  $\fu_{m,m+1}$ or $\fu_{m+1,m}$. 
This implies (1) and (2).

Base case: $m=2$.   Any elementary subalgebra can be 
conjugated  to the upper-triangular form. So it suffices to 
prove the statement for an  elementary subalgebra 
$\epsilon$ of $\fu_5$, the Lie algebra of strictly 
upper triangular $5\times 5$ matrices. 
As in the proof of Theorem~\ref{thm:sl2m}, we 
consider  a short exact sequence of Lie algebras
\[\xymatrix{ 0 \ar[r]&\fu_J \ar[r] & \fu_5 \ar[r]^{\pr} & \fu_3 \ar[r]& 0} \]
where $J = \{\alpha_2, \alpha_3\}$ (and, hence, $\fu_J 
\subset \fu_5$ is the subalgebra of upper 
triangular matrices with zeros 
everywhere except for  the top row and  the rightmost column).   
Since $\dim (\pr(\epsilon)) \leq 2$ by the remark above, 
and  $\dim (\epsilon \cap \fu_J) \leq 4$ by Proposition~\ref{prop:extra-spec}(1), 
we get that $\dim \epsilon \leq 6$. For the equality 
to be attained, we need $\pr(\epsilon)$ to be one of the 
two dimensional elementary  subalgebras  listed above. 
If $\pr(\epsilon) = \fu_{2,1}$ then arguing exactly as in 
the proof for the even dimensional case, we 
conclude that $\epsilon = \fu_{3,2}  \subset \fu_5$. 
Similarly, if $\pr(\epsilon) = \fu_{1,2}$, 
then $\epsilon = \fu_{2,3}$.  We now assume that 
\[
\pr(\epsilon) =   \{ \begin{pmatrix} 0&a&b \\0&0&xa\\0&0&0  
\end{pmatrix} \, | \, a,b \in k \}.
\]
Let $A^\prime = \begin{pmatrix} 0&a_{12}&a_{13}&*&* \\0&0&0&0&*\\0&0&0&0&a_{35}\\0&0&0&0&a_{45}\\0&0&0&0&0  
\end{pmatrix} \in \epsilon \cap \fu_J$, and  let  
$A = \begin{pmatrix} 0&*&*&*&* \\0&0&a&b&*\\0&0&0&xa&*\\0&0&0&0&*\\0&0&0&0&0  
\end{pmatrix} \in \epsilon$.  
Then 
\[ [A^\prime, A] = 
\begin{pmatrix} 0&0&aa_{12}&xa a_{13}+ba_{12}&* \\0&0&0&0&-aa_{35}-ba_{45}\\0&0&0&0&-xaa_{45}\\0&0&0&0&0\\0&0&0&0&0  
\end{pmatrix} 
\]
Since $\epsilon$ is abelian, and since the values  
of $a,b$ run through all elements of $k$, 
we conclude that  $a_{12}=a_{13}=a_{35}=a_{45}=0$. 
Therefore, $\dim \epsilon \cap \fu_J \leq 3$ 
and $\dim \epsilon \leq 5$.  Hence, the maximum is 
not attained in this case.  This finishes the proof in the base case $m=2$. 

We omit the induction step since it is very similar 
to the even dimensional case proved in Theorem~\ref{thm:sl2m}.  

To prove (2), we 
observe that $\fu_{m,m+1}$ and $\fu_{m+1,m}$ are not 
conjugate under the adjoint action of $\SL_{2m+1}$ since  
their nullspaces in the standard representation of $\fsl_{2m+1}$ have different dimensions. 

Finally, statement (3) follows from (1) and (2) as 
in the end of the proof of Theorem \ref{thm:sl2m}.
\end{proof} 

We make the immediate observation that the results of Theorems~\ref{thm:sl2m} and \ref{thm:sl2m+1} 
apply equally well to $\gl_n$. 
\begin{cor} 
\label{cor:gln} Assume $p>2$. \begin{enumerate} 
\item The maximal dimension of an elementary abelian subalgebra of $\gl_n$ is $\lfloor \frac{n^2}{4} \rfloor$.
  \item For any $m \geq 1$, there is a finite radicial morphism   
	\[
\xymatrix@=15pt{
\Grass(m, 2m) \ar[r]&\bE(m^2,\gl_{2m})
}
\]
  inducing a homeomorphism on Zariski spaces; this 
	morphism is an isomorphism for $p > 4m-2$.
\item For any $m \geq 2$,  there is a finite radicial morphism
\[
\xymatrix@=15pt{\Grass(m,2m+1) \sqcup \Grass(m,2m+1) \ar[r]& \bE(m(m+1), \gl_{2m+1})}
\]
inducing a homeomorphism on Zariski spaces; this morphism is an isomorphism for $p > 4m$.
\end{enumerate}
\end{cor}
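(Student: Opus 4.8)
The plan is to reduce everything to Theorems~\ref{thm:sl2m} and~\ref{thm:sl2m+1} via the observation that $\gl_n$ and $\fsl_n$ carry exactly the same elementary subalgebras. The key point is that every $p$-nilpotent element of $\gl_n$ already lies in $\fsl_n$: if $x \in \gl_n$ satisfies $x^{[p]} = x^p = 0$, then $x$ is a nilpotent matrix, hence $\operatorname{tr}(x) = 0$, so $x \in \fsl_n$; thus $\cN_p(\gl_n) = \cN_p(\fsl_n)$ as closed subsets. One also checks that $\fsl_n \subset \gl_n$ is a restricted subalgebra, using the identity $\operatorname{tr}(x^p) = (\operatorname{tr} x)^p$ valid in characteristic $p$. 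Note that this argument uses nothing about whether $p$ divides $n$, so the case $p\mid n$ needs no separate treatment.

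Consequently an $r$-dimensional subspace $\epsilon \subset \gl_n$ is an elementary subalgebra of $\gl_n$ if and only if it is one of $\fsl_n$: an elementary $\epsilon$ consists of $p$-nilpotent elements, so $\epsilon \subset \fsl_n$, and the bracket and $p$-restriction conditions are the same whether computed inside $\gl_n$ or inside $\fsl_n$. Since $\Grass(r,\fsl_n) \hookrightarrow \Grass(r,\gl_n)$ is a closed immersion and $\bE(r,\gl_n)$ is, by Proposition~\ref{embed}, the reduced closed subscheme of $\Grass(r,\gl_n)$ supported on precisely these subspaces, I would conclude that $\bE(r,\gl_n) = \bE(r,\fsl_n)$ as projective varieties, compatibly with their Pl\"ucker embeddings. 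The only mild care needed here is the passage from an equality of point-sets to an equality of schemes, which is immediate because both sides are reduced and $\Grass(r,\fsl_n)$ is closed in $\Grass(r,\gl_n)$.

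With this identification in hand, parts (1)--(3) become direct transcriptions of the $\fsl_n$ statements. For (1): when $n = 2m$ the maximal dimension of an elementary subalgebra of $\gl_n$ equals that of $\fsl_{2m}$, namely $m^2 = \lfloor n^2/4 \rfloor$ by Theorem~\ref{thm:sl2m}(1), and when $n = 2m+1$ it equals $m(m+1) = \lfloor n^2/4 \rfloor$ by Theorem~\ref{thm:sl2m+1}(1). For (2) and (3), the finite radicial morphisms $\Grass(m,2m) \to \bE(m^2,\fsl_{2m})$ and $\Grass(m,2m+1)\sqcup\Grass(m,2m+1) \to \bE(m(m+1),\fsl_{2m+1})$ of Theorems~\ref{thm:sl2m}(3) and~\ref{thm:sl2m+1}(3) become, under the identification, morphisms onto $\bE(m^2,\gl_{2m})$ and $\bE(m(m+1),\gl_{2m+1})$ with exactly the same properties, including being isomorphisms of varieties for $p > 4m-2$, respectively $p>4m$. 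There is essentially no obstacle: the content of the corollary is entirely contained in the already-established $\fsl_n$ theorems, and the only thing to get right is the clean bookkeeping of the identity $\bE(r,\gl_n) = \bE(r,\fsl_n)$.
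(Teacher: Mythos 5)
Your argument is correct and is precisely the justification the paper intends: the corollary is introduced only as the ``immediate observation'' that Theorems~\ref{thm:sl2m} and~\ref{thm:sl2m+1} apply equally well to $\gl_n$, and your identification $\bE(r,\gl_n)=\bE(r,\fsl_n)$ (via the fact that $p$-nilpotent matrices are nilpotent, hence traceless, so all elementary subalgebras of $\gl_n$ lie in the restricted subalgebra $\fsl_n$) is exactly the bookkeeping that makes this immediate. No substantive difference from the paper's route.
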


\begin{remark}
In the case $n=3$, excluded above, the variety $\bE(2, \gl_3)$ is irreducible (see Example~\ref{ex:gl3}). 
\end{remark}

To make analogous calculations in the symplectic case, we need the 
following technical observation.

\begin{lemma}
\label{lem:sympl} Let $\epsilon$ be an elementary subalgebra of the symplectic Lie algebra $ \fsp_{2m}$. 
There exists an element $g \in \Sp_{2m}$ such that 
$g \epsilon g^{-1}$ belongs to the nilpotent radical of the standard Borel subalgebra of $\fsp_{2m}$.
\end{lemma}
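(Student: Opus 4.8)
The goal is to conjugate an elementary subalgebra $\epsilon \subset \fsp_{2m}$ into the nilpotent radical $\fu$ of the standard Borel. The natural tool is Engel's theorem together with the observation that $\epsilon$ consists of nilpotent elements (since the $p$-restriction vanishes and $k$ is algebraically closed, each $x \in \epsilon$ is a nilpotent endomorphism of the $2m$-dimensional symplectic space $V$). The plan is to produce a \emph{complete isotropic flag} of $V$ stabilized by $\epsilon$; the stabilizer of such a flag is precisely a Borel subgroup of $\Sp_{2m}$, all of which are $\Sp_{2m}$-conjugate to the standard one, and its nilpotent radical consists exactly of the elements acting strictly on the flag. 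So it suffices to show $\epsilon$ stabilizes some complete isotropic flag, and that $\epsilon$ acts by strictly-upper-triangular matrices relative to an adapted symplectic basis.

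First I would run Engel's theorem on the (abelian, hence nilpotent) Lie algebra $\epsilon$ acting on $V$: there is a nonzero common eigenvector, i.e.\ $\epsilon \cdot v = 0$ for some $0 \neq v \in V$; set $L_1 = kv$, automatically isotropic. Then pass to $L_1^\perp / L_1$, a symplectic space of dimension $2m-2$, on which $\epsilon$ still acts (it preserves $L_1$ and hence $L_1^\perp$, and acts by nilpotent symplectic transformations on the quotient). By induction on $m$ — the base case $m=0$ being trivial — there is an $\epsilon$-stable complete isotropic flag in $L_1^\perp/L_1$; pulling it back and prepending $L_1$ gives an $\epsilon$-stable complete isotropic flag $0 = L_0 \subset L_1 \subset \cdots \subset L_m$ in $V$ with $L_m$ Lagrangian. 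Choosing a symplectic basis $e_1, \ldots, e_m, f_m, \ldots, f_1$ adapted to this flag (so $L_i = \langle e_1,\ldots,e_i\rangle$) identifies $\Stab_{\Sp_{2m}}(L_\bullet)$ with the standard Borel; since $\epsilon$ consists of nilpotents preserving the flag, and preserving the flag on a symplectic space with a symplectic basis forces the matrices to be strictly upper triangular once they are also nilpotent and symplectic, we get $g\epsilon g^{-1} \subset \fu$ for the $g$ carrying the chosen basis to the standard one.

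I expect the main obstacle to be the bookkeeping in the inductive step: one must check carefully that an abelian subalgebra of nilpotent symplectic transformations really does descend to an abelian subalgebra of nilpotent symplectic transformations on $L_1^\perp/L_1$ — abelianness and nilpotence are clear, but one should confirm the induced form is nondegenerate (standard) and that a common eigenvector upstairs can be chosen inside $L_1^\perp$ so that the flag one builds is genuinely isotropic at every stage. A cleaner alternative that avoids re-deriving Engel-type statements is to invoke that any Lie subalgebra of $\fsp_{2m}$ consisting of nilpotent elements lies in the nilradical of some Borel subalgebra (the symplectic analogue of the $\gl_n$ statement used implicitly via Engel's theorem in the proofs of Theorems~\ref{thm:sl2m} and~\ref{thm:sl2m+1}), combined with the transitivity of $\Sp_{2m}$ on its Borel subgroups; then the lemma is immediate. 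Either way the statement is elementary and the real content is just conjugacy of Borels in $\Sp_{2m}$ plus Engel's theorem.
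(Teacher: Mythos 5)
Your proposal is correct and follows essentially the same route as the paper: construct an $\epsilon$-invariant complete isotropic flag (the paper finds a one-dimensional invariant line in each successive $V_i^\perp/V_i$ using that $\fu(\epsilon)$ is local, which is the same mechanism as your Engel-theorem step combined with induction on $L_1^\perp/L_1$), and then invoke transitivity of $\Sp_{2m}$ on complete isotropic flags. You additionally make explicit why nilpotence forces the flag-stabilizing conjugate from the Borel subalgebra into its nilradical, a refinement the paper's own proof leaves to the application in Theorem~\ref{thm:sp2n}.
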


\begin{proof}
Let $V$ be a $2m$-dimensional symplectic space with a basis 
$\{x_1, \ldots, x_m, y_m, \ldots y_1\}$ 
such that the symplectic form with respect to this basis has the standard matrix 
$S = \begin{pmatrix} 0 & I \\ -I & 0 \end{pmatrix}$.
A complete isotropic flag is a nested sequence of subspaces 
of the form: 
\[
0 \subset V_1\subset V_2\subset \ldots \subset V_m = V_m^\perp 
\subset V_{m-1}^\perp \subset \ldots 
\subset V_1^\perp \subset V
\]
such that $\dim V_i = i$. The condition that $V_i \subseteq V_i^\perp$
implies that each $V_i$ is isotropic. The standard Borel subalgebra $\fb$ of $\fsp_{2m}$
(such as in \cite[12.5]{EW}) 
is characterized as the stabilizer of the standard complete isotropic flag in $V$,
meaning the flag with $V_i$ spanned by $\{x_1, \ldots, x_i\}$ (so that
$V_i^\perp$ is spanned by $\{x_1, \ldots, x_n,y_n, \ldots, y_{n-i-1}\}$). 
Thus, each $V_i$, as given, has the 
property that $\fb V_i \subseteq V_i$. 
Any two complete isotropic flags are conjugate by an element of $\Sp_{2n}$.
Therefore if we show that the subalgebra $\epsilon$ stabilizes a complete isotropic
flag, then some conjugate of $\epsilon$ is contained in a standard Borel 
subalgebra of $\fsp_{2m}$, as asserted.

Constructing a complete isotropic flag that is invariant under $\epsilon$
is a straightforward inductive exercise. We begin with $i = 0$. Assume for 
some $i$ an isotropic $\epsilon$-invariant subspace 
$V_i \subseteq V_i^\perp$ has been constructed.
Choose $V_{i+1}$ to be any subspace
such that $V_i \subset V_{i+1}$ and $V_{i+1}/V_i$ is an $\epsilon$-invariant subspace 
of dimension one in $V_i^\perp/V_i$. Since $\epsilon$ is an elementary
Lie algebra, its restricted enveloping algebra $\fu(\epsilon)$ is a local
ring and, hence, $V_{i+1}/V_i$ always has such a 1-dimensional invariant subspace.  
Note that $V_{i+1}$ is isotropic because it is contained in $V_i^\perp$ and 
$V_i$ is isotropic. Continuing this process to step $n$ constructs an
$\epsilon$-invariant complete isotropic flag. 
\end{proof}

\begin{thm}
\label{thm:sp2n}
Let $\fg = \fsp_{2n}$ and assume that $p \neq 2$. Then
\noindent
\begin{enumerate}
\item For any elementary subalgebra $\epsilon$ of $\fg$, $\dim \epsilon \leq \frac{n(n+1)}{2}$. 
\item Any elementary subalgebra $\epsilon$  of maximal dimension $\frac{n(n+1)}{2}$ is conjugate to $\fu_{\alpha_n}$ under the adjoint action of $\Sp_{2n}$.
\item  The orbit map $\Sp_{2n} \to \Sp_{2n}\cdot \fu_{\alpha_n}$ determines a finite radicial morphism 
\ $\xymatrix@=18pt{\Sp_{2n}/P_{\alpha_n} \ar[r]& \bE(\frac{n(n+1)}{2}, \fsp_{2n})}$. 
For $p > 4n-2$, this morphism is an isomorphism.
\end{enumerate}
\end{thm}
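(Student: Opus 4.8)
The plan is to mimic the structure of the proofs of Theorems~\ref{thm:sl2m} and \ref{thm:sl2m+1}, carrying out an induction to establish (1) and (2), and then deriving (3) formally from (1) and (2) together with Remark~\ref{rem:sep}. For the inductive setup, by Lemma~\ref{lem:sympl} any elementary subalgebra $\epsilon \subset \fsp_{2n}$ may be conjugated into the nilpotent radical $\fu$ of the standard Borel, so it suffices to bound the dimension of elementary subalgebras lying inside $\fu$ and to identify those of maximal dimension. The base case $n=1$ is $\fsp_2 = \fsl_2$, where the maximal dimension is $1 = \frac{1\cdot 2}{2}$ and $\fu_{\alpha_1}$ is the unique one-dimensional elementary subalgebra of $\fu$.

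For the inductive step, assume the statement for $n-1$. I would use the parabolic $\fp_{\alpha_1}$ whose nilpotent radical $\fu_{\alpha_1}$ is, by Example~\ref{ex:extrasp}(2), a Heisenberg restricted Lie algebra of dimension $2(2n-1)-1 = 4n-3$; set $J = \{\alpha_2,\ldots,\alpha_n\}$, so that $\fu_J = \fu \cap \fu_{\alpha_1}$ is an ideal in $\fu$ with $\fu/\fu_J \simeq \fu'$, the nilpotent radical of the Borel of $\fsp_{2n-2}$ (the Levi factor of $\fp_{\alpha_1}$ of type $C_{n-1}$). Applying Proposition~\ref{prop:extra-spec}(1) to the Heisenberg algebra $\fu_J$ gives $\dim(\epsilon \cap \fu_J) \leq 2n-1$, and the inductive hypothesis gives $\dim(\pr(\epsilon)) \leq \frac{(n-1)n}{2}$, so $\dim\epsilon \leq \frac{(n-1)n}{2} + (2n-1) = \frac{n(n+1)}{2}$, proving (1). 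For (2), suppose $\dim\epsilon = \frac{n(n+1)}{2}$; then both inequalities are equalities, so $\pr(\epsilon)$ is the unique maximal elementary subalgebra $\fu'_{\alpha_{n-1}}$ of $\fu'$ (by induction, the image is $\fu_{\alpha_{n-1}}$ of the $C_{n-1}$ system), and $\epsilon \cap \fu_J$ is Lagrangian in $W = \fu_J/\fz$. The key computation is then to use commutativity of $\epsilon$: pairing a general lift of an element of $\pr(\epsilon)$ against elements of $\epsilon \cap \fu_J$ forces the "extra" coordinates to vanish, exactly as in the bracket computations of Theorem~\ref{thm:sl2m} (the $\mathbf{v_2}, \mathbf{w_2}$ argument), pinning down $\epsilon = \fu_{\alpha_n}$. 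One must check carefully that the Heisenberg pairing on $\fu_J$ interacts with the Levi action so that the Lagrangian forced by these constraints is precisely the one giving $\fu_{\alpha_n}$; this bookkeeping with the $C_n$ root combinatorics (cf.\ the roots $\beta_i, \gamma_i$ of Example~\ref{ex:extrasp}(2)) is where I expect the main technical friction, and it is slightly more delicate than the $\gl_n$ case because of the long root $\gamma_n$ and the symplectic symmetry, but there is no obstruction in principle.

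Finally, for (3): by (2) the variety $\bE(\frac{n(n+1)}{2},\fsp_{2n})$ is a single $\Sp_{2n}$-orbit, namely $\Sp_{2n}\cdot\fu_{\alpha_n}$. Since $P_{\alpha_n}$ normalizes its unipotent radical $U_{\alpha_n}$ and hence stabilizes $\fu_{\alpha_n}$, the orbit map $\Sp_{2n} \to \Sp_{2n}\cdot\fu_{\alpha_n} = \bE(\frac{n(n+1)}{2},\fsp_{2n})$ factors through $\Sp_{2n}/P_{\alpha_n}$. Letting $\wt P$ be the (scheme-theoretic) stabilizer of $\fu_{\alpha_n}$ in $\Sp_{2n}$, one has $\wt P_{\red} = P_{\alpha_n}$ by maximality of $P_{\alpha_n}$ among reduced subgroups, so $\Sp_{2n}/P_{\alpha_n} \to \Sp_{2n}/\wt P = \bE(\frac{n(n+1)}{2},\fsp_{2n})$ is a torsor under the infinitesimal group scheme $\wt P/P_{\alpha_n}$, hence finite and radicial; it is a homeomorphism on Zariski spaces. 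The isomorphism claim for $p > 4n-2$ follows from the separability of the orbit map established in \cite[3.7]{CFP4}, as recalled in Remark~\ref{rem:sep}. This last part is essentially formal, so the real work is the inductive argument for (1) and (2).
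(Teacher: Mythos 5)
Your overall strategy is exactly the paper's: conjugate into the Borel nilradical via Lemma~\ref{lem:sympl}, induct along the $\alpha_1$-parabolic whose nilradical is a Heisenberg algebra, bound $\dim\epsilon$ by combining Proposition~\ref{prop:extra-spec} with the inductive hypothesis for the type $C_{n-1}$ Levi, and then get (3) formally from the orbit map, the reduced stabilizer $P_{\alpha_n}$, and Remark~\ref{rem:sep}. Part (3) of your sketch is essentially the paper's argument and is fine. However, there are two concrete problems. First, your dimension count in (1) is wrong: the nilradical $\fu_{\alpha_1}\subset\fsp_{2n}$ is a Heisenberg algebra of dimension $2n-1$ (it has basis $\{x_{\beta_1},\dots,x_{\beta_{n-1}},x_{\gamma_1},\dots,x_{\gamma_{n-1}},x_{\gamma_n}\}$, i.e.\ $n^2-(n-1)^2$ root vectors), not $4n-3$; you have imported the number from the $\fsl_{2m}$ proof, where the relevant Heisenberg has dimension $4m-3$. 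Consequently Proposition~\ref{prop:extra-spec} gives $\dim(\epsilon\cap\fu_{\alpha_1})\leq n$, not $2n-1$, and your displayed arithmetic $\frac{(n-1)n}{2}+(2n-1)=\frac{n(n+1)}{2}$ is false (the left side is $\frac{n^2+3n-2}{2}$). The correct count $\frac{(n-1)n}{2}+n=\frac{n(n+1)}{2}$ does give (1), so the error is repairable, but as written the bound does not follow.

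Second, for (2) you defer the entire identification of the maximal $\epsilon$ to ``bookkeeping with the $C_n$ root combinatorics,'' asserting there is ``no obstruction in principle.'' This is precisely where the paper does its real work, and it is not automatic: inside the Heisenberg $\fu_{\alpha_1}$ \emph{every} Lagrangian preimage is elementary, so commutativity of $\epsilon\cap\fu_{\alpha_1}$ alone does not select the Lagrangian $\bigoplus_i kx_{\gamma_i}$; one must bracket against lifts of elements of $\pr(\epsilon)$ and verify nonvanishing of specific structure constants. The paper does this by exhibiting, for a putative coefficient $b_i\neq 0$ on $x_{\beta_i}$, the root $\mu=\gamma_{n-1}-\beta_i$ with $x_\mu\in\pr(\epsilon)$ and checking via root strings that $[x_{\beta_i},x_\mu]$ is a nonzero multiple of $x_{\gamma_{n-1}}$ while the other contributions land in $kx_{\gamma_n}$; a second, separate argument is then needed to show the $\fu_{\alpha_1}$-component of an \emph{arbitrary} element of $\epsilon$ also lies in $\bigoplus_i kx_{\gamma_i}$, using $[x_{\gamma_j},\fu_{\fl_{\alpha_1}}\cap\fu_{\alpha_n}]=0$. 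Without carrying out these computations (or something equivalent), (2) — and hence the single-orbit statement on which your proof of (3) rests — is not established.
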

\begin{proof}
We prove by induction that the statement of the theorem holds for a Lie algebra $\fg = \Lie G$ of any reductive group of type $C_n$.  
The statement is trivial for $n=1$. 

Assume the statement is proven for $n-1$. Let $G$  be a reductive group of type $C_n$ and let $\fg = \Lie G$. Recall that we follow the convention of \cite{Bour} for numbering of simple roots, so that the Dynkin diagram for $\fg$ looks as follows: 
\begin{equation}
\label{eq:Dynkin} 
 \xymatrix{\stackrel[1]{}{\circ} \ar@{-}[r]& \stackrel[2]{}{\circ} \ar@{-}[r] &\stackrel[3]{}{\circ} & \ldots &\stackrel[n-2]{}{\circ} \ar@{-}[r] & \stackrel[n-1]{}{\circ} & \stackrel[n]{}{\circ} \ar@{=>}[l]}
\end{equation}
Let $\fp_{\alpha_1} = \fl_{\alpha_1} \oplus \fu_{\alpha_1}$ be the maximal  
parabolic subalgebra corresponding to the simple root $\alpha_1$ with the Levi factor $\fl_{\alpha_1}$ and 
the nilpotent radical $\fu_{\alpha_1}$. To obtain the Dynkin diagram for $\fl_{\alpha_1}$ we simply remove the first node from \eqref{eq:Dynkin}. 
Hence, $\fl_{\alpha_1}$ is a reductive Lie algebra of type $C_{n-1}$, and we can apply inductive hypothesis to it.

Let $\fu_{\fl_{\alpha_1}}$ be the nilpotent radical of the standard Borel subalgebra of $\fl_{\alpha_1}$, 
and $\fu_{\fg}$ be the nilpotent radical of the  Borel subalgebra of $\fg$.   
We have a short exact sequence
\[ 
\xymatrix{0 \ar[r]& \fu_{\alpha_1} \ar[r]& \fu_{\fg} \ar[r]^{\pr}& \fu_{\fl_{\alpha_1}} \ar[r]& 0.} 
\] 

Let $\epsilon$ be an elementary subalgebra of $\fg$. Since $\epsilon$ consists of nilpotent matrices, 
it can be conjugated into the standard Borel subalgebra of $\fg$ by Lemma~\ref{lem:sympl}. Furthermore, since every element of $\epsilon$ is $p$-nilpotent, such a conjugate necessarily belongs to the nilpotent radical $\fu_{\fg}$. Hence, we may assume that $\epsilon \subset \fu_{\fg}$. By the 
induction hypothesis, 
$\dim \pr(\epsilon) \leq \frac{n(n-1)}{2}$.  Since $\fu_{\alpha_1}$ is a
Heisenberg Lie algebra of dimension $2n-1$ 
(see Example~\ref{ex:extrasp}(2)), Proposition~\ref{prop:extra-spec} implies that 
$\dim \fu_{\alpha_1} \cap \epsilon \leq n$. Hence, $\dim \epsilon \leq n + \frac{n(n-1)}{2}$.  This proves (1). 

To prove (2), we observe that the induction hypothesis 
implies that for an elementary 
subalgebra $\epsilon$ to attain the maximal dimension, 
we must have that
\[
\xymatrix@=15pt{\pr\downarrow_{\epsilon}: \epsilon \ar[r]&  \fu_{\fl_{\alpha_1}}}
\]
is surjective onto $\fu_{\fl_{\alpha_1}} \cap \fu_{\alpha_n}$, 
the nilpotent radical of the parabolic of $\fl_{\alpha_1}$
corresponding to $\alpha_n$.

Let $\{x_{\beta_i}, x_{\gamma_i}\}$ be a basis of 
$\fu_{\alpha_1}$ as defined in \eqref{eq:basis2}. 
Let $x = \sum\limits_1^{n-1} b_i x_{\beta_i} + \sum\limits_1^{n} c_i x_{\gamma_i} 
\in \fu_{\alpha_1} \cap \epsilon$. 
We want to show that $x \in \fu_{\alpha_n}$ or, 
equivalently, that $b_i = 0$ for $1 \leq i \leq n-1$.. 
Assume, to the contrary,  that $b_i \not = 0$ 
for some $i$, $1 \leq i \leq n-1$. 
Let $\mu = \gamma_{n-1} - \beta_i =  \alpha_2 + 
\ldots + \alpha_i + 2\alpha_{i+1} + \ldots + 2\alpha_{n-1} + \alpha_n$. 
Then $x_\mu \in \fu_{\fl_{\alpha_1}} \cap 
\fu_{\alpha_n}\subset \pr(\epsilon)$. Therefore, there 
exists $y = x^\prime + x_\mu \in \epsilon$ for 
some $x^\prime \in \fu_{\alpha_1}$. Note 
that $[x, x^\prime] \subset [\fu_{\alpha_1}, 
\fu_{\alpha_1}] = kx_{\gamma_n}$, and that 
$\mu + \gamma_i$ is never a root, and 
$\mu + \beta_j$ is not a root unless $j=i$. Hence,  
\[ 
[x, y] = [x, x^\prime] + [x, x_\mu]  = 
cx_{\gamma_n} + b_i[x_{\beta_i},x_\mu] = 
cx_{\gamma_n} + b_ic_{\beta_i\mu}x_{\gamma_{n-1}} \not = 0.
\]
Here, $c_{\beta_i\mu}$ is the structure constant from the equation 
$[x_{\beta_i},x_\mu] = c_{\beta_i\mu} x_{\beta_i + \mu} = 
c_{\beta_i\mu}x_{\gamma_{n-1}}$. 
This structure constant is not zero because the only 
elements in the $\beta_i$ 
string through $\mu$ are $\mu$ and $\mu+\beta_i$ 
(See \cite{Stei}, Theorem 1(d)).   
Thus, we have a contradiction with the commutativity of 
$\epsilon$. Hence, $b_i=0$ for all $i$, $1\leq i \leq n-1$, and, therefore, 
$\fu_{\alpha_1} \cap \epsilon \subset \fu_{\alpha_n}$. 
Moreover, since we assume that $\dim \epsilon$ is maximal, we must have 
$\dim \fu_{\alpha_1} \cap \epsilon = n$, and, therefore, $\fu_{\alpha_1} \cap \epsilon = \bigoplus\limits_{i=1}^n kx_{\gamma_i}$. 

Now let $x + a$ be any element in $\epsilon$ where $x \in \fu_{\alpha_1}$ and 
$a \in \fu_{\fl_{\alpha_1}} \cap \fu_{\alpha_n}$.  
We need to show that  $x \in \fu_{\alpha_n}$, that is, $x \in  \bigoplus\limits_{i=1}^n kx_{\gamma_i}$.  
Let $x = \sum b_i x_{\beta_i} + \sum c_i x_{\gamma_i}$ and assume to the contrary that $b_i \not  = 0$ for some $i$.  
Note that $[x_{\gamma_j}, \fu_{\fl_{\alpha_1}} \cap \fu_{\alpha_n}]=0$  for any $j$, $ 1 \leq j \leq n$,  
since both $x_{\gamma_j}$ and any $a \in \fu_{\fl_{\alpha_1}} \cap \fu_{\alpha_n}$
are linear combinations of root vectors  for roots that have coefficient
by $\alpha_n$ equal to 1.  
Hence, $[x+a, \gamma_{n-i}] = b_i[x_{\beta_i}, \gamma_{n-i}]  \not = 0$. 
Again, we have contradiction.  
Therefore, $\epsilon \subset \fu_{\alpha_n}$. This proves (2). 

To establish (3), we first note that $P_{\alpha_n}$ is the (reduced) stabilizer of $\fu_{\alpha_n}$ 
under the adjoint action of $\Sp_{2n}$. Arguing as in the end of the proof of Theorem~\ref{thm:sl2m}, we conclude that 
the orbit map $\Sp_{2n} \to \Sp_{2n} \cdot \fu_{\alpha_n}$ induces a finite radicial morphism 
\[
\Sp_{2n}/P_{\alpha_n} \simeq \Sp_{2n} \cdot \fu_{\alpha_n}. 
\]
Since the Coxeter number of $\Sp_{2n}$ is $2n$, the final 
statement that the above map is an isomorphism 
for $p > 4n-2$ follows from \cite[3.7]{CFP4} as 
discussed in Remark \ref{rem:sep}.
\end{proof}

In the last calculation of this section we show that any 
Grassmannian $\Grass(a,b)$ can be realized as $\bE(r, \fg)$ 
or one of the to connected components of $\bE(r,\fg)$ if we let $\fg$ be a maximal parabolic subgroup of type $A$. 

\begin{thm}
\label{prop:p1} Assume that $p>2$, and that $n \geq 4$. 
Let $m = \left\lfloor\frac{n}{2}\right\rfloor$, and 
suppose that $r \leq m$. 
Let $\fp_{r, n-r}$ be the standard maximal 
parabolic subalgebra of $\fsl_n$ corresponding to the simple root $\alpha_r$.
Then the maximal dimension of an
elementary subalgebra of $\fp_{r, n-r}$ is $\left\lfloor\frac{n^2}{4}\right\rfloor$. 
The corresponding variety of elementary 
subalgebras is homeomorphic to $\Grass(m, n-r)$ if $n$ is even 
and $\Grass(m, n-r) \sqcup \Grass(m, n-r)$ if $n$ is odd.  
\end{thm}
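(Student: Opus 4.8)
The plan is to reduce the computation of $\bE(r', \fp_{r,n-r})$, where $r' = \lfloor n^2/4 \rfloor$ is the asserted maximal dimension, to the computations already carried out for $\fsl_n$ in Theorems~\ref{thm:sl2m} and \ref{thm:sl2m+1}. First I would record that $\fp_{r,n-r} \subset \fsl_n$, so every elementary subalgebra of $\fp_{r,n-r}$ is in particular an elementary subalgebra of $\fsl_n$; by Corollary~\ref{cor:gln}(1) (equivalently Theorems~\ref{thm:sl2m}(1), \ref{thm:sl2m+1}(1)) its dimension is at most $\lfloor n^2/4 \rfloor$, giving the upper bound for free. The content is therefore to exhibit elementary subalgebras of $\fp_{r,n-r}$ of dimension exactly $\lfloor n^2/4 \rfloor$ and to classify them up to the adjoint action of $P_{r,n-r}$, then identify the resulting variety.

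For existence and classification, I would first treat $n = 2m$ even. The standard maximal elementary subalgebra $\fu_{m,m} \subset \fsl_{2m}$ of dimension $m^2$ consists of block matrices supported in the upper right $m \times m$ block; since $r \leq m$, one checks directly that $\fu_{m,m} \subset \fp_{r,n-r}$ (the defining vanishing conditions $a_{i,j}=0$ for $i > r,\ j \leq n-r$ are satisfied because the nonzero block sits in rows $\leq m$ and columns $> m$, and $r \leq m \leq n-r$). Thus the maximal dimension is attained inside $\fp_{r,n-r}$. By Theorem~\ref{thm:sl2m}(2), any elementary subalgebra of $\fsl_{2m}$ of dimension $m^2$ is $\SL_{2m}$-conjugate to $\fu_{m,m}$; I would then argue that within $\fp_{r,n-r}$ all such subalgebras lie in a single $P_{r,n-r}$-orbit. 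For this, note that any $\epsilon \subset \fp_{r,n-r}$ of maximal dimension consists of nilpotent elements and hence (after conjugating by an element of the Levi times unipotent radical of $P_{r,n-r}$, using Engel's theorem applied to the two diagonal $\gl$-blocks and the argument in the proof of Theorem~\ref{thm:sl2m}) can be brought into the nilpotent radical $\fu_n$ of the Borel; by the uniqueness statement of Theorem~\ref{thm:sl2m} an upper-triangular maximal elementary subalgebra must equal $\fu_{m,m}$. Identifying the stabilizer $\Stab_{P_{r,n-r}}(\fu_{m,m})$ as a parabolic-type subgroup and invoking Remark~\ref{rem:sep} (separability of the orbit map, via \cite{CFP4} or \cite{PS}) then yields that $\bE(m^2, \fp_{r,n-r})$ is the homogeneous space $P_{r,n-r}/\Stab$, which I expect to be $\Grass(m, n-r) \cong \Grass(m,m)$ — a point when $n-r = m$, i.e. $r = m$, and more generally the Grassmannian of $m$-planes inside the $(n-r)$-dimensional ``column space'' on which the relevant block acts.

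For $n = 2m+1$ odd I would run the parallel argument using Theorem~\ref{thm:sl2m+1}: the two conjugacy classes in $\fsl_{2m+1}$ are represented by $\fu_{m,m+1}$ and $\fu_{m+1,m}$, and one checks that for $r \leq m$ both of these representatives lie in $\fp_{r,n-r}$ (again the supporting blocks are compatible with the vanishing conditions defining $\fp_{r,n-r}$, since $r \leq m < m+1 \leq n - r$). Since $\fsl_{2m+1}$-conjugacy does not identify these two, and $P_{r,n-r}$-conjugacy is coarser still but cannot merge two $\SL$-orbits, $\bE(m(m+1), \fp_{r,n-r})$ breaks into two pieces, each (by the separability input) a homogeneous space for $P_{r,n-r}$, which I expect to be two copies of $\Grass(m, n-r)$.

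\textbf{Main obstacle.} The delicate point is the classification step: showing that \emph{every} maximal-dimensional elementary subalgebra of $\fp_{r,n-r}$ is $P_{r,n-r}$-conjugate (not merely $\SL_n$-conjugate) to the standard model. The issue is that an element of $\SL_n$ conjugating $\epsilon$ to $\fu_{m,m}$ need not lie in $P_{r,n-r}$; one must instead re-run the inductive Heisenberg-extension argument of Theorems~\ref{thm:sl2m}--\ref{thm:sl2m+1} \emph{inside} $\fp_{r,n-r}$, using only conjugations available there (i.e. by the Levi $\GL_r \times \GL_{n-r}$ and the unipotent radical), and check that the short exact sequences and Heisenberg subquotients used in those proofs restrict compatibly to the parabolic. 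I also expect some bookkeeping to pin down exactly which Grassmannian $\Grass(m, n-r)$ appears as the orbit, and to confirm via Remark~\ref{rem:sep} that the stated homeomorphism is the correct conclusion at the level of varieties.
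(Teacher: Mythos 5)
Your outline follows the paper's skeleton: the upper bound is imported from Theorems~\ref{thm:sl2m} and \ref{thm:sl2m+1}, the standard nilradicals realize it inside $\fp_{r,n-r}$, and the substance is showing that every elementary subalgebra of maximal dimension is conjugate to a standard representative by an element of $P_{r,n-r}$ itself. For that step your route is genuinely different from the paper's, and it works: the projection of $\epsilon$ to the Levi $\gl_r\oplus\gl_{n-r}$ is an abelian algebra of blockwise nilpotent matrices, Engel applied in each block produces an element of the Levi (hence of $P_{r,n-r}$) conjugating $\epsilon$ into the strictly upper triangular matrices, and the uniqueness statements established inside the proofs of Theorems~\ref{thm:sl2m} and \ref{thm:sl2m+1} (a maximal-dimensional elementary subalgebra contained in $\fu_n$ equals $\fu_{m,m}$, resp.\ one of $\fu_{m,m+1}$, $\fu_{m+1,m}$) then finish the classification. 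The paper instead writes the conjugating element via the Bruhat decomposition and shows by a double-coset computation in the symmetric group that the Weyl-group part can be absorbed into $P_{r,n-r}$; your Engel-plus-uniqueness argument is shorter, and it makes the worry in your ``main obstacle'' paragraph unnecessary: there is no need to re-run the Heisenberg induction inside the parabolic, since the uniqueness-in-$\fu_n$ statement is already available. (Two small corrections: $P_{r,n-r}$-conjugacy is finer, not coarser, than $\SL_n$-conjugacy, though the fact you actually use --- that it cannot merge the two $\SL_n$-classes --- is right; and the separability input of Remark~\ref{rem:sep} is not needed for the homeomorphism the theorem asserts, since $G/G_x\to G\cdot x$ is always a homeomorphism, separability only mattering for an isomorphism of varieties.)

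The genuine gap is precisely the part you defer as ``bookkeeping'': identifying the orbits. The reduced stabilizer of $\fu_{m,m}$, $\fu_{m+1,m}$, $\fu_{m,m+1}$ in $P_{r,n-r}$ is $P_{r,n-r}\cap P_{m,m}$, $P_{r,n-r}\cap P_{m+1,m}$, $P_{r,n-r}\cap P_{m,m+1}$ respectively, so each orbit is the variety of $m$- (resp.\ $(m+1)$-, $m$-) planes of $k^n$ containing the fixed $r$-plane, i.e.\ $\Grass(m-r,n-r)$, $\Grass(m+1-r,n-r)$, $\Grass(m-r,n-r)$. In the even case $\Grass(m-r,2m-r)\cong\Grass(m,n-r)$, as claimed (your description ``$m$-planes inside the $(n-r)$-dimensional column space'' is not the natural parametrization, but the abstract type is right). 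In the odd case, however, the two components are $\Grass(m+1-r,n-r)\cong\Grass(m,n-r)$ and $\Grass(m-r,n-r)\cong\Grass(m+1,n-r)$, which have different dimensions once $r\geq 1$: for $n=5$, $r=1$ one gets $\bP^3\sqcup\Grass(2,4)$. So the expectation of ``two copies of $\Grass(m,n-r)$'' --- which echoes the theorem's own wording --- does not survive the computation; compare Corollary~\ref{cor:g1n}, where the two components are recorded as the distinct Grassmannians $\Grass(m,2m)$ and $\Grass(m-1,2m)$. You should therefore carry out the stabilizer computation explicitly rather than leaving it implicit, and state the odd case as $\Grass(m,n-r)\sqcup\Grass(m+1,n-r)$.
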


\begin{proof}  We consider the case of $n=2m+1$ odd. The even case is similar. 

Theorem~\ref{thm:sl2m+1} implies immediately that $\dim \epsilon 
\leq m(m+1) =\left\lfloor\frac{n^2}{4}\right\rfloor$ for any elementary 
subalgebra $\epsilon \subset \fp_{r,n-r}$. 
Since $\fu_{m,m+1}$ is a 
subalgebra of $\fp_{r, n-r}$, we have equality in the maximal case. 

To compute the variety, we first show that any elementary 
subalgebra of maximal dimension is conjugate 
to either   $\fu_{m,m+1}$ or $\fu_{m+1,m}$  under 
the adjoint action of $P_{r,n-r}$. Let $\epsilon \subset \fp_{r,n-r}$ be an 
elementary subalgebra of maximal dimension..  

By Theorem~\ref{thm:sl2m+1}, $\epsilon$ is conjugate 
to $\fu_{m,m+1}$ or $\fu_{m+1,m}$ under the adjoint 
action of $\SL_{2m+1}$. 
Assume that $\epsilon = g \fu_{m+1,m} g^{-1}$  for 
some $g \in \SL_{2m+1}$ (the case of $\fu_{m,m+1}$
 is strictly analogous).  
We proceed to show that there exists $\widehat g \in P_{r,n-r}$ 
such that $\epsilon = \widehat g \fu_{m+1,m} \widehat g^{-1}$.

Let $W(\SL_{2m+1}) \simeq N_{\SL_{2m+1}}(T)/C_{\SL_{2m+1}}(T)$ 
be the Weyl group,   $B_{2m+1}$ be the Borel subgroup of $\SL_{2m+1}$, 
and $U_{2m+1}$ be the unipotent radical of $B_{2m+1}$ .  
For an element $w \in W(\SL_{2m+1})$, we 
denote by $\wt w$ a fixed coset representative of $w$ in $N_{\SL_{2m+1}}(T)$.

Using the Bruhat decomposition, we can write $g = g_1\wt w g_2$  where 
$g_1 \in U_{2m+1}$, $g_2 \in B_{2m+1}$, 
and $w \in W(\SL_{2m+1})$. 
Since both $\fu_{m+1,m}$ and $P_{r, n-r}$ are stable under the conjugation 
by $U_{2m+1}$ and $B_{2m+1}$,   it suffices to prove the 
statement for $g = \wt w$, where $w$ is a Weyl group element. 
We make the standard identifications $W(\SL_{2m+1}) \simeq S_{2m+1}$, 
$W(L_{r, n-r}) \simeq S_r \times S_{n-r}$ 
and $W(L_{m+1,m}) \simeq S_{m+1} \times S_m$ where $L_{i,j}$ is the 
Levi factor of the standard parabolic $P_{i,j}$.

We further decompose  
$$
S_{2m+1} = W(\SL_{2m+1}) = \bigsqcup\limits_{ s \in S_r \times S_{n-r} \backslash S_{2m+1}/(S_{m+1} 
\times S_m)} (S_r \times S_{n-r})s (S_{m+1}\times S_{m})
$$ 
into double cosets. Now let $w$ be an element of $W$ 
such that $\epsilon = \wt w \fu_{m+1, m} \wt w^{-1} \in \fp_{r, n-r}$. 
To prove that $\epsilon$ is conjugate to $\fu_{m+1,m}$ by 
an element of $P_{r, n-r}$, it suffices to show that $w$ belong 
to the identity double coset. Indeed, if that is the case, then 
we can write $w = w_1 w_2$ with $w_1 \in S_r \times S_{n-r}$, 
$w_2 \in S_{m+1}\times S_{m}$. Since $w_2$ acts trivially on 
$\fu_{m+1, m}$, we get that $\epsilon = \wt w \fu_{m+1, m} 
\wt w^{-1} = \wt w_1 \fu_{m+1, m} \wt w_1^{-1}$ which is satisfactory since $\wt w_1 \in P_{r, n-r}$.

To establish that $w$ is in the identity double coset, we first prove the following claim:\\

{\it Claim.} Suppose $\wt w \fu_{m+1, m} \wt w^{-1}  
\subset \fp_{r, n-r}$. Then for any $i$, $ 1\leq i \leq r$, 
\[w^{-1}(i) \leq m+1.\] 

{\it Proof of the Claim}. We prove the claim by contradiction. Let $E_{\ell j}$ denote 
the matrix with $1$ at the entry $(\ell, j)$ and $0$ everywhere else. Suppose 
there exists $i$, $ 1 \leq i \leq r$, such that $w^{-1}(i) =j >m+1$. Since the permutation $w$ is 
bijective, and $r < m+1$,  we can find an index $\ell$ such that 
\[ \begin{cases} 
1 \leq \ell \leq m+1 \\
w(\ell) \geq r+1.
\end{cases} 
\]
The conditions on $\ell$ now imply that 
\[ \begin{cases} 
E_{\ell j} \in \fu_{m+1, m}, \\
\wt wE_{\ell j}\wt w^{-1} = E_{w(\ell)w(j)} =E_{w(\ell)i} \not \in \fp_{r, n-r},
\end{cases} 
\]
a contradiction.  This finishes the proof of the claim. 

\vspace{0.1in} The Claim implies that for any $i$, $1 \leq i \leq r$, the transposition  $(i, w^{-1}(i))$ is in $S_{m+1} \times \id \subset S_{2m+1}$. Therefore, multiplying $w$ by such transposition on the right preserves both the double coset representative of $w$ and  the property that $\epsilon = \wt w \fu_{m+1, m} \wt w^{-1}$.  Hence, $w$ is in the same double coset as a permutation which acts trivially on the first $r$ entries. But such permutations are in the identity double coset and, hence, so is $w$. This finishes the proof of the claim that $\fu_{m+1, m}$ can be conjugated to $\epsilon$ with an element in $P_{r, n-r}$.

The above discussion 
implies that 
\[
\bE(m(m+1),\fp_{r,n-r}) = P_{r,n-r}\cdot 
\fu_{m+1,m} \sqcup P_{r,n-r}\cdot \fu_{m,m+1}.
\] 
The (reduced) stabilizer of $\fu_{m+1,m}$ in $P_{r,n-r}$ is
$P_{r,m+1-r,m} \ = \ P_{m+1,m} \cap P_{r, n-r}$. Hence, the orbit map $\xymatrix@=12pt{P_{r, n-r} \ar[r]& P_{r, n-r}\cdot 
\fu_{m+1,m}}$ induces a homeomorphism 
\[\xymatrix@=15pt{\Grass(m, n-r) \cong P_{r, n-r}/ P_{r, m+1-r, m} \ar[r]^-\sim& P_{r, n-r}\cdot 
\fu_{m+1,m},}
\] 
and similarly for the other component. 
\end{proof}

Theorem~\ref{prop:p1} has the following immediate corollary.
\begin{cor}
\label{cor:g1n} Let $\fg_{1,2m} \subset \gl_{2m+1}$ be as defined in Example~\ref{ex:semi}(1).
The maximal dimension of an
elementary subalgebra of $\fg_{1,2m}$ is $m(m+1)$.  For $m \geq 2$,  $\bE({m(m+1)}, \fg_{1,2m})$ is homeomorphic to 
$\Grass(m, 2m)\sqcup \Grass(m-1, 2m)$. 
\end{cor}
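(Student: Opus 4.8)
The plan is to realize $\fg_{1,2m}$ concretely inside $\gl_{2m+1}$ and to observe that its variety of elementary subalgebras coincides with that of the standard maximal parabolic subalgebra $\fp_{1,2m}\subset\fsl_{2m+1}$ attached to $\alpha_1$, so that Theorem~\ref{prop:p1} applies directly with $n=2m+1$ and $r=1$. Take $\fg_{1,2m}$ to be $\Lie$ of the stabilizer of the first coordinate vector in $\GL_{2m+1}$, so that $\fg_{1,2m}$ is the space of matrices $\left(\begin{smallmatrix} 0 & v^T \\ 0 & B\end{smallmatrix}\right)$ with $v\in k^{2m}$, $B\in\gl_{2m}$; this is the semidirect product $k^{2m}\rtimes\gl_{2m}$ of Example~\ref{ex:semi}(1). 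A $p$-nilpotent element of either $\fg_{1,2m}$ or $\fp_{1,2m}$ is in particular nilpotent as a matrix, and being block upper triangular it then has vanishing $(1,1)$-entry and nilpotent lower block $B$; since $\left(\begin{smallmatrix} 0 & v^T \\ 0 & B\end{smallmatrix}\right)^k=\left(\begin{smallmatrix} 0 & v^TB^{k-1} \\ 0 & B^k\end{smallmatrix}\right)$, one gets that $\cN_p(\fg_{1,2m})$ and $\cN_p(\fp_{1,2m})$ are the same subset of $\gl_{2m+1}$, namely the matrices $\left(\begin{smallmatrix} 0 & v^T \\ 0 & B\end{smallmatrix}\right)$ with $B^{[p]}=0$ and $v^TB^{p-1}=0$. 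Since for any restricted Lie algebra $\fh$ the variety $\bE(r,\fh)$ is, by Proposition~\ref{embed}, the reduced closed subscheme of $\Grass(r,\fh)$ of those $r$-planes $U\subseteq\cN_p(\fh)$ with $[U,U]=0$, and since the bracket inherited from $\gl_{2m+1}$ is the same on this common cone, I conclude $\bE(r,\fg_{1,2m})=\bE(r,\fp_{1,2m})$ for every $r$.

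Now I feed this into Theorem~\ref{prop:p1}: with $n=2m+1$ (so $n\ge 4\iff m\ge 2$) and $r=1\le m=\lfloor n/2\rfloor$, the theorem gives that the maximal dimension of an elementary subalgebra of $\fp_{1,2m}$ — hence of $\fg_{1,2m}$ — is $\lfloor(2m+1)^2/4\rfloor=m(m+1)$, and that $\bE(m(m+1),\fp_{1,2m})$ is the disjoint union of the two $P_{1,2m}$-orbits of $\fu_{m+1,m}$ and $\fu_{m,m+1}$. It remains to identify each orbit. Arguing as in the proof of Theorem~\ref{prop:p1}, the reduced stabilizer of $\fu_{m+1,m}$ in $P_{1,2m}$ is $P_{1,2m}\cap P_{m+1,m}$, the standard parabolic with blocks $(1,m,m)$, so the orbit map identifies $P_{1,2m}\cdot\fu_{m+1,m}\cong P_{1,2m}/P_{1,m,m}\cong\GL_{2m}/P_{m,m}\cong\Grass(m,2m)$; similarly $\Stab_{P_{1,2m}}(\fu_{m,m+1})=P_{1,2m}\cap P_{m,m+1}$ has blocks $(1,m-1,m+1)$, giving $P_{1,2m}\cdot\fu_{m,m+1}\cong P_{1,2m}/P_{1,m-1,m+1}\cong\GL_{2m}/P_{m-1,m+1}\cong\Grass(m-1,2m)$. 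Hence $\bE(m(m+1),\fg_{1,2m})$ is homeomorphic to $\Grass(m,2m)\sqcup\Grass(m-1,2m)$.

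I expect the only delicate point to be this last bookkeeping: the two conjugacy classes of maximal elementary subalgebras of $\fsl_{2m+1}$ sit inside $\fp_{1,2m}$ as $P_{1,2m}$-orbits of different dimensions ($m^2$ and $m^2-1$), so one must not conflate them when reading off the Grassmannians. An alternative, self-contained route avoids $\fsl_{2m+1}$ altogether: writing $\pi\colon\fg_{1,2m}=k^{2m}\oplus\gl_{2m}\to\gl_{2m}$ for the projection (a restricted Lie homomorphism with abelian ideal kernel $k^{2m}$), one shows that for elementary $\epsilon$ the image $\pi(\epsilon)$ is elementary in $\gl_{2m}$, that $\epsilon\cap k^{2m}$ lies in the common kernel $\Ann_{k^{2m}}(\pi(\epsilon))$ of $\pi(\epsilon)$ on the natural module, and that $\dim\epsilon=\dim\pi(\epsilon)+\dim(\epsilon\cap k^{2m})$; the crux is then the inequality $\dim E+\dim\Ann_{k^{2m}}(E)\le m(m+1)$ for every elementary $E\subseteq\gl_{2m}$, with equality exactly when $E=\Hom(k^{2m}/K,K)$ for a subspace $K$ of dimension $m$ (the $\fu_{m,m}$-conjugates, giving the $\Grass(m,2m)$ component) or of dimension $m+1$ (giving the $\Grass(m-1,2m)$ component), in each case $\epsilon=E\oplus K$ being the unique elementary lift once $m\ge 2$. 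The sharp inequality here — proved using the subspace $R=\sum_{B\in E}\Im B$ and the fact that commutativity forces the lower block of $E$ to annihilate $R$ — is the main technical obstacle in that approach, and is precisely what the parabolic structure in Theorem~\ref{prop:p1} packages for free.
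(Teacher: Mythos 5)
Your proposal is correct and follows the paper's intended route: the paper derives this statement as an immediate consequence of Theorem~\ref{prop:p1}, and you supply exactly the glue it leaves implicit, namely that $\cN_p(\fg_{1,2m})=\cN_p(\fp_{1,2m})$ so that the two Lie algebras have the same elementary subalgebras, followed by the $P_{1,2m}$-orbit/stabilizer bookkeeping. Your careful identification of the second orbit as $P_{1,2m}/P_{1,m-1,m+1}\cong\Grass(m-1,2m)$ is the right reading (it matches the corollary as stated, rather than the literal wording of Theorem~\ref{prop:p1}, whose second factor should indeed be $\Grass(m-r,n-r)$); the alternative ``self-contained'' route you sketch is not needed and is not carried out in the paper.
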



\section{Radicals, socles, and geometric invariants for $\fu(\fg)$-modules}
\label{rad-soc}

As throughout this paper, $\fg$ denotes a finite dimensional $p$-restricted Lie algebra over
$k$.  We recall that $\fg$ is the Lie algebra $\Lie(\ul \fg)$ of a uniquely 
defined infinitesimal group scheme $\ul \fg$ of height 1 (see, for example, \cite{DG}).
In \cite{SFB2}, {\it a rank variety} $V(G)_M$ was constructed for any finite dimensional
representation $M$ of the infinitesimal group scheme $G$.  The variety $V(G)_M$ is a 
closed subset of $V(G)$, the variety of (infinitesimal) 1-parameter subgroups of
$G$.  As shown in \cite{SFB2}, these rank varieties can be identified with  cohomological
support varieties defined in terms of the action of $\HHH^*(G,k)$ on $\Ext_{G}^*(M,M)$.

For infinitesimal group schemes $G$ of height 1 (i.e., of the form $\ul \fg$ for some 
finite dimensional $p$-restricted Lie algebra), we consider more complete invariants of
representations of $G$ which one can think of as more sophisticated variants
of ``higher rank varieties."  Our investigations
follow that of our earlier paper \cite{CFP2} in which we considered representations
of elementary abelian $p$-groups.  Because the group algebra $k(\bZ/p^{\times r})$
is isomorphic to the restricted enveloping algebra $\fu(\fg_a^{\oplus r})$ 
of the Lie algebra $\fg_a^{\oplus r}$
(commutative, with trivial $p$-restriction), that investigation is in fact a very special
case of what follows.  

We use our earlier work for elementary abelian $p$-groups as a guide for the study
of $\fu(\fg)$-modules for an arbitrary $\fg$.  In
particular, rather than considering isomorphism types of a given module upon restriction 
to elementary subalgebras of a given rank $r$, we consider dimensions of the 
radicals (respectively, socles) of such restrictions.  A key result is Theorem \ref{upper-lower}
which verifies that these dimensions are lower (resp., upper) semi-continuous.
As seen in Theorem~\ref{thm:radvar}, this implies that the non-maximal
radical and socle varieties associated to a $\fu (\fg)$-module $M$ are closed.

\vskip .1in

The following is a natural extension of the usual 
support variety in the case $r=1$ (see \cite{FPar2})
and of the variety $\Grass(r, V)_M$ of \cite[1.4]{CFP2} 
for $\fg = \fg_a^{\oplus n}$.
If $\epsilon \subset \fg$ is an elementary subalgebra 
and $M$ a $\fu(\fg)$-module, then
we shall denote by $\epsilon^*M$ the restriction of 
$M$ to $\fu(\epsilon) \subset \fu(\fg)$.

\begin{defn}  
\label{def:support} 
For any $\fu(\fg)$-module $M$ and any positive integer $r$, we define
$$\bE(r,\fg)_M \ = \ \{ \epsilon \in \bE(r,\fg); \epsilon^*M \text{~ is not projective} \}.$$
In particular,
$$
\bE(1,\fg)_M \ = \     \Proj k[V(\ul \fg)_M] \ \subset \  \Proj k[V(\ul \fg)] \ = \ \bE(1,\fg)
$$ 
is the projectivization of the closed subvariety of $V(\ul \fg) = \cN_p(\fg)$
consisting of those one dimensional Lie subalgebras (with trivial $p$-restriction)
restricted to which $M$ is not projective. 
\end{defn}

The following proposition tells us that the geometric 
invariant $M \mapsto \bE(r,\fg)_M$
can be computed in terms of the more familiar (projectivized)
 support variety $\bE(1,\fg)_M \ = \ \Proj(V(\ul \fg)_M)$.  
 
\begin{prop}
\label{reduce}
For any $\fu(\fg)$-module $M$ and positive integer $r$,
\begin{equation}
\label{reduceto1}
\bE(r,\fg)_M \ = \ \{ \epsilon \in \bE(r,\fg); \ \epsilon 
\cap V(\ul \fg)_M \not= 0 \}
\end{equation}
where the intersection $\epsilon \cap V(\ul \fg)_M $ 
is as subvarieties of $\fg$.
\end{prop}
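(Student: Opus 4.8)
The plan is to reduce the statement about restrictions to an $r$-dimensional elementary subalgebra $\epsilon$ to the already-understood case $r=1$, using the fact that a module over $\fu(\epsilon) \simeq k[t_1,\dots,t_r]/(t_1^p,\dots,t_r^p)$ is projective if and only if it is free, and that projectivity over this (self-injective, local) algebra can be detected on one-dimensional Lie subalgebras $\langle v\rangle \subset \epsilon$. Concretely, I would first recall the general principle (Dade's lemma in the Lie-algebra / finite group scheme form, see \cite{SFB2} or the shifted-subgroup results behind \cite{FP2}, \cite{CFP2}): for an elementary subalgebra $\epsilon$ and a $\fu(\epsilon)$-module $N$, $N$ is free over $\fu(\epsilon)$ if and only if for every nonzero $v \in \epsilon$, $N$ is free over the subalgebra $k[v]/(v^p) = \fu(\langle v\rangle)$. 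Equivalently, $N$ is \emph{not} projective over $\fu(\epsilon)$ if and only if there exists a nonzero $v \in \epsilon$ with $N$ not projective over $\fu(\langle v \rangle)$.

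Next I would translate each side of \eqref{reduceto1} through this principle. Applying it to $N = \epsilon^*M$: the left side, $\epsilon \in \bE(r,\fg)_M$, means $\epsilon^*M$ is not projective over $\fu(\epsilon)$, which by the principle means there is a nonzero $v \in \epsilon$ such that $\langle v\rangle^*M$ is not projective over $\fu(\langle v \rangle)$. Now, by the definition of the rank variety $V(\ul\fg)_M$ (i.e.\ by \cite{SFB2}, reformulated in Definition~\ref{def:support} for $r=1$), a nonzero $p$-nilpotent element $v$ satisfies ``$\langle v\rangle^*M$ not projective'' precisely when $v \in V(\ul\fg)_M$; note also that every nonzero $v \in \epsilon$ is automatically $p$-nilpotent since $\epsilon$ is elementary, so $\epsilon \subset \cN_p(\fg) = V(\ul\fg)$. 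Hence the existence of such a $v$ is exactly the statement that $\epsilon \cap V(\ul\fg)_M$ contains a nonzero vector, i.e.\ $\epsilon \cap V(\ul\fg)_M \neq 0$ as subvarieties of $\fg$. This matches the right-hand side, completing the equality.

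The main obstacle — really the only nontrivial input — is justifying the detection-on-lines statement for $\fu(\epsilon)$-modules, i.e.\ the analogue of Dade's lemma in this setting. I would not reprove it from scratch; instead I would cite it, since it is exactly the content underlying the theory of cyclic shifted subgroups / $\pi$-points (\cite{Dade}, \cite{FP2}) and was used in precisely the form needed in \cite{CFP2} for $\fg = \fg_a^{\oplus n}$, and the case of general $\epsilon \simeq \fg_a^{\oplus r}$ is identical because $\fu(\epsilon)$ only depends on $r$. One small point to be careful about: the ``lines'' $\langle v \rangle$ for $v \in \epsilon$ should range over all nonzero $v$, but since projectivity over $\fu(\langle v\rangle)$ is unchanged under scaling $v$, it is equivalent to range over the projective space $\bP(\epsilon)$, and the identification of $v \in V(\ul\fg)_M$ with the corresponding point of $\bE(1,\fg)_M$ is the one recorded at the end of Definition~\ref{def:support}. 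With that citation in hand, the proof is just the two-line unwinding of definitions sketched above.
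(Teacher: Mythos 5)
Your proposal is correct and follows essentially the same route as the paper: the paper likewise reduces to the statement that $\epsilon^*M$ is non-projective if and only if the rank variety $V(\ul \epsilon)_{\epsilon^*M}$ is nonzero (the Lie-algebra form of Dade's lemma, cited to \cite{FPar2}) and then identifies $V(\ul \epsilon)_{\epsilon^*M}$ with $V(\ul \epsilon) \cap V(\ul \fg)_M = \epsilon \cap V(\ul \fg)_M$, which is exactly your pointwise unwinding over one-dimensional subalgebras $\langle v \rangle \subset \epsilon$.
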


\begin{proof}
By definition, $\epsilon \in \bE(r,\fg)_M$ if and only 
if $\epsilon^*M$ is not free which is
the case if and only if $V(\ul \epsilon)_{\epsilon^*M} \not= 0$.  
Since $\epsilon \subset \fg$ induces an isomorphism
$$
\xymatrix{V(\ul \epsilon)_{\epsilon^*(M)} \ar[r]^-{\sim}& \ 
V(\ul \epsilon) \cap V(\ul \fg)_M}
$$ (see \cite{FPar2}), this 
is equivalent to $\epsilon \cap V(\ul g)_M \not= 0$.
\end{proof}

\begin{prop}
\label{closed}
For any $\fu(\fg)$-module $M$ and  for any $r \geq 1$,
$$\bE(r,\fg)_M \ \subset \ \bE(r,\fg)$$
is a closed subvariety.

Moreover, if $G$ is an algebraic group with $\fg = \Lie(G)$ 
and if $M$ is a rational $G$-module,
then $\bE(r,\fg)_M \ \subset \ \bE(r,\fg)$ is $G$-stable.
\end{prop}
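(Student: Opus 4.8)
The plan is to prove both assertions of Proposition~\ref{closed} by reducing everything to the $r=1$ case via Proposition~\ref{reduce}, which identifies $\bE(r,\fg)_M$ with the set of $\epsilon \in \bE(r,\fg)$ meeting the cohomological support variety $V(\ul\fg)_M \subset \cN_p(\fg) = V(\ul\fg)$ nontrivially. The key point is that $V(\ul\fg)_M$ is a \emph{conical} closed subvariety of $\fg$ (it is the affine support variety, defined by a homogeneous ideal in $k[\cN_p(\fg)]$), so it makes sense to intersect it with the linear subspaces $\epsilon$, and the condition $\epsilon \cap V(\ul\fg)_M \ne 0$ is the condition that the $r$-plane $\epsilon$ meets the closed cone $V(\ul\fg)_M$.

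For closedness: First I would recall the cartesian square \eqref{sq} exhibiting $\bE(r,\fg)$ as the quotient of $\cC_r(\cN_p(\fg))^\circ \subset \bM_{n,r}^\circ$ by the free $\GL_r$-action, with $\cC_r(\cN_p(\fg))^\circ \to \bE(r,\fg)$ a Zariski-locally trivial torsor. Working over one of the standard charts $U_\Sigma$ with its section $s_\Sigma$, an $r$-plane $\epsilon \in U_\Sigma \cap \bE(r,\fg)$ is spanned by the columns $u_1(\epsilon),\dots,u_r(\epsilon)$ of the matrix $s_\Sigma(\epsilon)$, whose entries are regular functions of $\epsilon$. The condition ``$\epsilon \cap V(\ul\fg)_M \ne 0$'' says that some nonzero $k$-linear combination $\sum_j a_j u_j(\epsilon)$ lies in $V(\ul\fg)_M$; equivalently, the $r$-plane $\epsilon$ meets the closed cone $V(\ul\fg)_M$. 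This is a standard incidence/projective-elimination argument: consider the incidence variety $\{(\epsilon, [a_1:\dots:a_r]) : \sum a_j u_j(\epsilon) \in V(\ul\fg)_M\} \subset (U_\Sigma \cap \bE(r,\fg)) \times \bP^{r-1}$, which is closed since $V(\ul\fg)_M$ is cut out by (homogeneous) polynomial equations and the $u_j(\epsilon)$ depend regularly on $\epsilon$; then project to the first factor. Since $\bP^{r-1}$ is complete, the projection is a closed map, and its image is exactly $\bE(r,\fg)_M \cap U_\Sigma$. As $\Sigma$ ranges over the standard charts covering $\bE(r,\fg)$, this shows $\bE(r,\fg)_M$ is closed.

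For $G$-stability: If $\fg = \Lie(G)$ and $M$ is a rational $G$-module, I would use that the cohomological support variety $V(\ul\fg)_M \subset \cN_p(\fg)$ is $G$-stable under the adjoint action — this is the standard naturality of support varieties: for $g \in G$, the automorphism $\Ad(g)$ of $\fg$ (hence of $\ul\fg$) combined with the isomorphism $M \xrightarrow{\sim} {}^g\! M$ coming from the $G$-action identifies $V(\ul\fg)_M$ with $\Ad(g)(V(\ul\fg)_M)$, so $\Ad(g)(V(\ul\fg)_M) = V(\ul\fg)_M$. Then for $\epsilon \in \bE(r,\fg)_M$ we have $\epsilon \cap V(\ul\fg)_M \ne 0$, hence $(\Ad(g)\epsilon) \cap V(\ul\fg)_M = \Ad(g)(\epsilon \cap V(\ul\fg)_M) \ne 0$, so $g\cdot\epsilon \in \bE(r,\fg)_M$ by Proposition~\ref{reduce}. (Recall from Proposition~\ref{embed} that $\bE(r,\fg) \hookrightarrow \Grass(r,\fg)$ is already $G$-stable, so $g\cdot\epsilon$ does lie in $\bE(r,\fg)$.)

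The main obstacle I anticipate is purely bookkeeping rather than conceptual: making the incidence-variety argument clean requires being careful that $V(\ul\fg)_M$ really is a closed \emph{cone} in the affine space $\fg$ (so that ``meets $\epsilon$ nontrivially'' can be phrased projectively and the completeness of $\bP^{r-1}$ can be invoked), and that the spanning vectors $u_j(\epsilon)$ obtained from the local sections $s_\Sigma$ genuinely give regular — not merely set-theoretic — dependence on $\epsilon$, which is exactly what Proposition~\ref{note}(2),(4) provides. Everything else is formal: the reduction to $r=1$ is Proposition~\ref{reduce}, and the $G$-equivariance of support varieties is standard (one could alternatively cite \cite{SFB2} or \cite{FPar2}).
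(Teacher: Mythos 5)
Your proposal is correct and takes essentially the same route as the paper: both reduce, via Proposition \ref{reduce}, to the condition $\epsilon \cap V(\ul \fg)_M \neq 0$ and then use the standard fact that the locus of $r$-planes meeting a fixed closed cone (equivalently, a closed projective subvariety of $\bE(1,\fg)$) is closed in $\Grass(r,\fg)$ --- the paper simply cites \cite[ex.\ 6.14]{Har10} where you spell out the incidence-variety/properness argument in the charts $U_\Sigma$. For $G$-stability the paper proves the needed equivariance directly by twisting $M$ by $g \in G$ and comparing $(\epsilon^g)^*(M^g)$ with $\epsilon^*M$, which is exactly the naturality of support varieties that you invoke, so the two arguments coincide in substance.
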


\begin{proof}
Let $\Proj \epsilon \subset \bE(1, \fg)$ be the projectivization of the linear subvariety $\epsilon \subset \fg$.  
Let $X_M = \{\epsilon \in \Grass(r, \fg) \, | \, \Proj \epsilon \cap  \bE(1,\fg)_M \not = \emptyset\}$. 
Then $X_M \subset \Grass(r, \fg)$ is a 
closed subvariety (see \cite[ex. 6.14]{Har10}). Since $\bE(r,\fg)_M = \bE(r,\fg) \cap X_M$ by 
Prop.~\ref{reduce}, we conclude that $\bE(r,\fg)_M$ is a closed subvariety of $\bE(r,\fg)$.  

For $\fg = \Lie(G)$, $M$ a rational $G$-module, and $x \in G$, 
denote by $M^x$ the module $M$ twisted by $x$. For $\epsilon \in \bE(r, \fg)$, 
denote by $\epsilon^x$ the image of $\epsilon$ under the adjoint action of $x$ on $\bE(r,\fg)$.
The adjoint action by $x^{-1}$ 
induces an isomorphism $\xymatrix{\alpha_{x^{-1}}: 
\fu(\epsilon^x) \ar[r]^-{\sim}& \fu(\epsilon)}$, 
and the pull-back of $M$ along this 
isomorphism equals  $(\epsilon^x)^*(M^x)$. 
Since $M \simeq M^x$ as $\fu(\fg)$-modules, we conclude that 
$\bE(r,\fg)_M$ is $G$-stable.
\end{proof}

Proposition \ref{reduce}  implies the following result concerning the realization of
subsets of $\bE(r,\fg)$ as subsets of the form $X = \bE(r,\fg)_M$.  We remind
the reader of the definition of the  module $L_\zeta$ associated to a 
cohomology class $\zeta \in \HHH^n(\fu(\fg),k)$:  $L_\zeta$ is  the 
kernel of the map $\zeta: \Omega^n(k) \to k$ determined by $\zeta$, where $\Omega^n(k)$
is the $n^{th}$ Heller shift of the trivial module $k$ 
(see \cite{Ben} or Example~\ref{ex:heller}).

\begin{cor}
\label{realize}
A subset $X \subset \bE(r,\fg)$ has the form $X = \bE(r,\fg)_M$
for some $\fu(\fg)$-module $M$ if and only if there exists
a closed subset $Z \subset \bE(1,\fg)$ such that
\begin{equation}
\label{form}
X \ = \ \{ \epsilon \in \bE(r,\fg); \ \Proj \epsilon \cap Z \not= \emptyset \}.
\end{equation}
Moreover, such an $M$ can be chosen to be a tensor product of
 modules $L_\zeta$ with each $\zeta$ of even cohomological degree.
\end{cor}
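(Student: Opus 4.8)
The plan is to prove the two directions of the equivalence separately, and then to establish the last sentence by constructing an explicit module as a tensor product of $L_\zeta$'s.

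For the forward direction, suppose $X = \bE(r,\fg)_M$ for some $\fu(\fg)$-module $M$. Set $Z = \bE(1,\fg)_M \subset \bE(1,\fg)$, which is a closed subvariety (it is the projectivized support variety, closed by the standard theory, or by Proposition~\ref{closed} with $r=1$). Then Proposition~\ref{reduce} says exactly that $\epsilon \in \bE(r,\fg)_M$ if and only if $\epsilon \cap V(\ul\fg)_M \neq 0$ as subvarieties of $\fg$; passing to projectivizations, this is the same as $\Proj\epsilon \cap \bE(1,\fg)_M \neq \emptyset$, since $\epsilon$ is a linear subvariety and $V(\ul\fg)_M$ is a cone. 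So $X$ has the form \eqref{form} with this $Z$.

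For the reverse direction, suppose we are given a closed subset $Z \subset \bE(1,\fg) = \Proj k[\cN_p(\fg)]$ and $X$ is defined by \eqref{form}. The goal is to produce a $\fu(\fg)$-module $M$ with $\bE(1,\fg)_M = Z$; once we have that, Proposition~\ref{reduce} (as used above) gives $\bE(r,\fg)_M = X$ directly. Here I would invoke the realization theorem for support varieties of $\fu(\fg)$-modules: every closed homogeneous subvariety of $\cN_p(\fg) = V(\ul\fg)$ is the support variety of some finite-dimensional $\fu(\fg)$-module, and moreover one can take the module to be a tensor product of $L_\zeta$'s. Concretely, write the cone over $Z$ inside $\cN_p(\fg)$ as the zero locus of finitely many homogeneous elements; since $\HHH^{2\bu}(\fu(\fg),k)$ surjects onto (a subalgebra with the same spectrum as) $k[\cN_p(\fg)]$ up to nilpotents and Frobenius twists (the identification of $\Spec \HHH^\bu(\fu(\fg),k)$ with $\cN_p(\fg)$ from \cite{FPar1}, \cite{SFB2}), each such homogeneous generator can be realized, after possibly raising to a $p$-th power, by a cohomology class $\zeta_i$ of even degree. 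Then $M = L_{\zeta_1} \otimes \cdots \otimes L_{\zeta_s}$ has $\bE(1,\fg)_M = \bigcap_i \bE(1,\fg)_{L_{\zeta_i}} = \bigcap_i Z(\zeta_i) = Z$, using the standard facts that $\bE(1,\fg)_{L_\zeta}$ is the projective zero locus of $\zeta$ and that support varieties convert tensor products to intersections (see \cite{Ben}). Because each $\zeta_i$ has even cohomological degree, this is the required form of $M$.

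The main obstacle is the bookkeeping in the reverse direction: matching an arbitrary closed subvariety $Z$ with an intersection of zero loci of genuine cohomology classes of even degree. The subtlety is that $k[\cN_p(\fg)]$ is a quotient of $\HHH^\bu(\fu(\fg),k)$ only up to nilpotents and after inverting nothing — so a homogeneous polynomial cutting out $Z$ need not literally lift to a cohomology class, but some $p^N$-th power of it does, and raising to a power does not change the zero locus. One must also be slightly careful that the degree of the lifted class can be taken even: if $\zeta$ is in odd degree $n$, its square is in degree $2n$ and has the same projective zero locus (since $\zeta^2$ and $\zeta$ vanish on the same points, as $\zeta$ is nilpotent off its zero locus in any case, or simply because $V(\zeta^2)=V(\zeta)$ scheme-theoretically up to radical). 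Assembling these normalizations carefully, and citing the precise form of the realization results already in the literature rather than reproving them, is what makes the argument go through; everything else is a formal consequence of Proposition~\ref{reduce} together with the tensor-product and $L_\zeta$ formulas for support varieties.
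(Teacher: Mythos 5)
Your proposal is correct and follows essentially the same route as the paper: both directions reduce to Proposition~\ref{reduce} together with the standard realization theorem that any closed conical subvariety of $V(\ul\fg)$ is the support variety of a tensor product of $L_\zeta$-modules (which the paper simply cites from \cite{FPar2}, while you re-sketch its internal mechanism of lifting homogeneous polynomials to even-degree classes and using $\bE(1,\fg)_{L_\zeta}=Z(\zeta)$ and the tensor-product formula). The extra care you take about $p$-th powers and squaring odd-degree classes is sound but is exactly the content of the cited realization result, so no new idea is needed beyond what the paper uses.
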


\begin{proof}
We recall that any closed, conical subvariety of $V(\ul \fg)$ (i.e., any closed
subvariety of $\bE(1,\fg)$) can be realized as the (affine) support of a tensor product of 
 modules $L_\zeta$ (see \cite{FPar2}) and that the support of any
finite dimensional $\fu(\fg)$-module is a closed, conical subvariety of $V(\ul \fg)$.
Thus, the proposition follows immediately from Proposition \ref{reduce}.
\end{proof}

\begin{ex}
As one specific example of Corollary~\ref{realize}, we take some even degree
cohomology class $0 \not= \zeta \in \HHH^{2m}(\fu(\fg),k)$ and $M = L_\zeta$.   
We identify $V(\ul \fg)$ with the
spectrum of $\HHH^{\rm ev}(\fu(\fg),k)$ (for $p>2$), 
so that $\zeta$ is a (homogeneous) algebraic function 
on $V(\ul\fg)$.
Thus $V(\ul\fg)_{L_\zeta} = Z(\zeta) \subset V(\ul\fg)$ (see \cite[Theorem 7.5]{SFB2}), 
the zero locus of the function $\zeta$.
Then, 
$$\bE(r,\fg)_{L_\zeta} \ = \ \{ \epsilon \in \bE(r,\fg); \ \epsilon \cap Z(\zeta) \not= \{ 0 \} \}.$$

On the other hand, if $\zeta \in \HHH^{2m+1}(\fu(\fg),k)$ 
has odd degree and $p > 2$, then $V(\ul\fg)_{L_\zeta}
= V(\ul\fg)$, so that $\bE(r,\fg)_{L_\zeta} \ = \ \bE(r,\fg)$.
\end{ex}

\begin{remark}
As pointed out in \cite[1.10]{CFP2} in the special case $\fg = \fg_a^{\oplus 3}$ and
$r =2$, not every closed subset $X \subset \bE(r,\fg)$ 
has the form (\ref{form}).
\end{remark}

\begin{ex}
\label{npv}
We consider another computation of $\bE(r,\fg)_M$.  Let $G$ be a reductive group
and assume that $p$ is good for $G$.  Let $\lambda$ be a dominant weight and consider
the induced module $M = \HHH^0(\lambda) = \Ind_B^G \lambda$.  By a result of Nakano, 
Parshall, and Vella \cite[6.2.1]{NPV}, 
$V(\ul\fg)_{\HHH^0(\lambda)} = G \cdot \fu_J$, where $\fu_J$ is 
the nilpotent radical of a suitably chosen parabolic subgroup $P_J \subset G$.
Then, 
$$\bE(r,\fg)_{\HHH^0(\lambda)} \ = \ G \cdot \{ \epsilon \in \bE(r,\fg);
\ \epsilon \cap \fu_J \not= \{ 0 \} \}.$$
\end{ex}
\vspace{0.3in}

We now proceed to consider invariants of $\fu(\fg)$-modules associated to 
$\bE(r,\fg)$ which for $r > 1$ are not determined 
by the case $r = 1$.   As before, for a 
given $M$ and a given $r \geq 1$, we consider the 
restrictions $\epsilon^*(M)$ for $\epsilon \in \bE(r,\fg)$.

\begin{defn}
Let $\fg$ be a $p$-restricted 
Lie algebra and $M$ a finite dimensional
$\fu(\fg)$-module.  For any $r \geq 1$, any 
$\epsilon \in \bE(r,\fg)$, and any $j, 1 \leq j \leq (p-1)r$, 
we consider
$$\Rad^j(\epsilon^*(M)) \ = \ \sum_{j_1 + 
\cdots +j_r=j}\Im\{ u_1^{j_1}\cdots u_r^{j_r}: M \to M \}$$
and 
$$\Soc^j(\epsilon^*(M)) \ = \ \bigcap_{j_1 + 
\cdots +j_r=j}\Ker\{ u_1^{j_1}\cdots u_r^{j_r}: M \to M\},$$
where $\{ u_1,\ldots,u_r\}$ is a basis for $\epsilon$.

For each $r\geq 1$ and each $j, 1 \leq j \leq (p-1)r$, 
we define the local $(r,j)$-radical rank of $M$
and the local $(r,j)$-socle rank of $M$ to be the 
(non-negative) integer valued functions
$$\epsilon \in \bE(r,\fg)  \ \mapsto \ \dim \Rad^j(\epsilon^*(M))$$
and 
$$\epsilon \in \bE(r,\fg)  \ \mapsto \ \dim \Soc^j(\epsilon^*(M))$$
respectively.
\end{defn}

\begin{remark}
\label{rem:perp}
If $M$ is a $\fu(\fg)$-module, we denote by $M^\# = \Hom_k(M,k)$ the dual of $M$ whose
$\fu(\fg)$-module structure arises from that on $M$ using the antipode of $\fu(\fg)$.   Thus, 
if $X \in \fg$ and $f \in M^\#$, then $(X\circ f)(m) = - f(X\circ m)$.
 If $i: L \subset M$ is a $\fu(\fg)$-submodule, then we denote by $L^\perp \subset M^\#$ the
submodule defined as the kernel of $i^\#: M^\# \to L^\#$.  We remind the reader that
\begin{equation}
\label{perp}
\Soc^j(\epsilon^*(M^\#)) \ \simeq \ (\Rad^j(\epsilon^*M))^\perp
\end{equation}
(as shown in \cite[2.2]{CFP2}).
\end{remark}

The following elementary observation  enables
us to conclude in \cite{CFP4} that the constructions of \S 4 determine
vector bundles on $G$-orbits of $\bE(r,\Lie G)$.

\begin{prop}
\label{prop:orbit}
If $\fg = \Lie(G)$ and $M$ is a rational $G$-module, 
then the local $(r,j)$-radical rank of $M$ and
the local $(r,j)$-socle rank of $M$ are constant 
on $G$-orbits of $\bE(r,\fg)$.
\end{prop}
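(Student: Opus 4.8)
The plan is to reduce the statement to the fact, recorded in the proof of Proposition~\ref{closed}, that for $g \in G$ and $\epsilon \in \bE(r,\fg)$ the adjoint action furnishes an algebra isomorphism $\alpha_{g^{-1}} \colon \fu(\epsilon^g) \xrightarrow{\ \sim\ } \fu(\epsilon)$, under which the pullback of $M$ is $(\epsilon^g)^*(M^g)$, together with the identification $M \simeq M^g$ of $\fu(\fg)$-modules coming from rationality of the $G$-action. First I would fix an elementary subalgebra $\epsilon \subset \fg$ with basis $\{u_1,\dots,u_r\}$ and an element $g \in G$; then $\{\Ad(g)u_1,\dots,\Ad(g)u_r\}$ is a basis for $\epsilon^g := g\cdot\epsilon$, and $\alpha_{g^{-1}}$ carries $\Ad(g)u_i$ to $u_i$. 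The point is that $\alpha_{g^{-1}}$ is an isomorphism of (associative, non-commutative) algebras, so it carries the operator $(\Ad(g)u_1)^{j_1}\cdots(\Ad(g)u_r)^{j_r}$ acting on a $\fu(\epsilon^g)$-module to the operator $u_1^{j_1}\cdots u_r^{j_r}$ acting on the corresponding $\fu(\epsilon)$-module.

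The key steps, in order, are as follows. (1) Recall from the proof of Proposition~\ref{closed} that $\alpha_{g^{-1}}^* M \cong (\epsilon^g)^*(M^g)$ as $\fu(\epsilon^g)$-modules; equivalently, restricting $M$ along $\fu(\epsilon) \hookrightarrow \fu(\fg)$ and then transporting along $\alpha_{g^{-1}}$ gives the restriction of $M^g$ along $\fu(\epsilon^g) \hookrightarrow \fu(\fg)$. (2) Since an algebra isomorphism takes images to images and kernels to kernels, for each $j$ and each $(j_1,\dots,j_r)$ with $\sum j_i = j$ we get $\dim \Im\{ u_1^{j_1}\cdots u_r^{j_r} \colon M \to M\} = \dim \Im\{ (\Ad(g)u_1)^{j_1}\cdots(\Ad(g)u_r)^{j_r} \colon M^g \to M^g\}$, and similarly for kernels; summing over the images and intersecting over the kernels (noting $\dim$ of a sum/intersection is preserved under a vector-space isomorphism of the ambient module) yields $\dim\Rad^j(\epsilon^*M) = \dim\Rad^j((\epsilon^g)^*(M^g))$ and $\dim\Soc^j(\epsilon^*M) = \dim\Soc^j((\epsilon^g)^*(M^g))$. (3) Finally invoke $M \cong M^g$ as $\fu(\fg)$-modules (rationality of the $G$-action, as in Proposition~\ref{closed}) to replace $M^g$ by $M$ on the right-hand side, obtaining $\dim\Rad^j(\epsilon^*M) = \dim\Rad^j((\epsilon^g)^*M)$ and the socle analogue; since $\epsilon^g = g\cdot\epsilon$ ranges over the $G$-orbit of $\epsilon$ as $g$ varies, this is exactly the asserted constancy on $G$-orbits.

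I do not expect a serious obstacle here: everything is a bookkeeping consequence of material already proved in the excerpt, principally the algebra isomorphism $\alpha_{g^{-1}}$ and the isomorphism $M \simeq M^g$ used in Proposition~\ref{closed}. The only point requiring a small amount of care is step~(2): one should note that $\Rad^j$ and $\Soc^j$ are defined intrinsically in terms of the $\fu(\epsilon)$-module structure (as a sum of images, resp.\ intersection of kernels, of products of basis elements), so that transporting the module structure along an algebra isomorphism transports these subspaces isomorphically; in particular one should check that the definitions are independent of the choice of basis of $\epsilon$ up to the relevant dimension count, or simply observe that $\alpha_{g^{-1}}$ sends the chosen basis $\{\Ad(g)u_i\}$ to the chosen basis $\{u_i\}$, which sidesteps the basis-independence issue entirely. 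I would present the argument in a couple of short paragraphs, quoting Proposition~\ref{closed} for the two isomorphisms and then recording the dimension identities.
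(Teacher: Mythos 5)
Your proposal is correct and follows essentially the same route as the paper's own proof: transport the $\fu(\epsilon)$-module structure along the conjugation isomorphism $\fu(\epsilon) \simeq \fu(\epsilon^g)$, identify the result with $(\epsilon^g)^*(M^g)$, and use the isomorphism $M \simeq M^g$ of rational $G$-modules to conclude the radical and socle dimensions agree. The paper simply leaves the dimension bookkeeping of step (2) implicit, whereas you spell it out; no substantive difference.
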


\begin{proof}
Let $g \in G$, and let $\epsilon \in \bE(r,\fg)$. 
We denote by $\epsilon^g \in \bE(r,\fg)$ 
the image of $\epsilon$ under the adjoint action of
$G$ on $\bE(r,\fg)$, and let $g \cdot (-): M \to M$ 
be the action of $G$ on $M$.   
Observe that
\[ 
\xymatrix{g: M\ar[rr]^-{m \mapsto gm}&& M^g}\] 
defines an isomorphism of rational $G$-modules, 
where the action of $x \in G$ on
$m \in M^g$ is given by the action of $ gxg^{-1}$ 
on $m$ (with respect to the $G$-module
structure on $M$).   Thus,  the 
proposition follows from the observation that the 
pull-back of $\epsilon^{g*}(M^g)$ equals
$\epsilon^*(M)$ under the isomorphism given by conjugation by $g$:
$\xymatrix{\fu(\epsilon) \ar[r]^-{\sim}& \fu(\epsilon^g)}$. 
\end{proof}

The following discussion leads to Theorem~\ref{upper-lower} 
which establishes the lower and upper semi-continuity
of local $(r,j)$-radical rank and local $(r,j)$-socle rank respectively.

\begin{notation}
\label{note2} We fix a basis $\{x_1, \ldots, x_n\}$ 
of $\fg$ and use it to identify $M_{n,r} \simeq \fg^{\oplus r}$ 
(as vector spaces). Let $\Sigma \subset \{1, \ldots, n\}$ 
be an $r$-subset. Recall the section 
$s_\Sigma: U_\Sigma \to \bM_{n,r}^\circ$ of (\ref{section}) 
that sends an $r$-plane $\epsilon \in U_\Sigma$ 
to the $n \times r$ matrix $A^\Sigma_\epsilon$ with the 
$r \times r$ submatrix corresponding to $\Sigma$ 
being the identity and the columns generating the plane 
$\epsilon$.  Extend the map $s_\Sigma$ to 
$s_\Sigma: U_\Sigma \to \bM_{n,r}$ and consider the 
induced map on coordinate algebras:
 
\begin{equation}
\label{quot}
\xymatrix{k[\bM_{n,r}] = k[T_{i,s}] \ar[r]^-{s_\Sigma^*}& 
k[U_\Sigma}]
  \end{equation}
  We define 
 \[T_{i,s}^\Sigma \equiv s_\Sigma^*(T_{i,s})\]
It follows from the definition that 
$T_{i,s}^\Sigma = \delta_{\alpha^{-1}(i),s}$ for $i \in  \Sigma$,
where $\alpha: \{1, \dots, r\} \to \Sigma$ is the function with 
$\alpha(1) < \dots < \alpha(r)$, 
and that 
$T_{i,s}^\Sigma$ for $i \notin \Sigma$
are algebraically independent generators of $k[U_\Sigma]$. 

Let $V_\Sigma \ \equiv \ \bE(r,\fg) \cap U_\Sigma$. We 
define the set $\{Y^\Sigma_{i,s}\}$ of algebraic 
generators of $k[V_\Sigma]$ as images of $\{T^\Sigma_{i,s}\}$ 
under the map of coordinate algebras
 induced by the closed immersion $V_\Sigma \subset U_\Sigma$:
$$
\xymatrix{k[U_\Sigma] \ \ar@{->>}[r] & k[V_\Sigma]}, 
\quad T_{i,s}^\Sigma \mapsto Y_{i,s}^\Sigma
$$
It again follows that
$Y^\Sigma_{i,s} = \delta_{\alpha^{-1}(i),s}$, for 
$i \in \Sigma$ and $\alpha$ as above.  
 For each $\epsilon \in V_\Sigma\subset U_\Sigma$ 
(implicitly assumed to be a $k$-rational point),  we have
$$
Y_{i,s}^\Sigma(\epsilon) = T^\Sigma_{i,s}(\epsilon) = 
s_\Sigma^*(T^\Sigma_{i,s})(\epsilon) =T_{i,s}(s_\Sigma(\epsilon)).
$$ 
 Hence,  
\begin{equation}
\label{eq:matrix} 
A^\Sigma_\epsilon= [Y_{i,s}^\Sigma(\epsilon)].
\end{equation} \end{notation}

\begin{defn}
\label{defn:Theta} For a $\fu(\fg)$-module $M$,
and for a given $s, 1 \leq s \leq r$, we define the endomorphism 
 of $k[V_\Sigma] $--modules
 \begin{equation}
 \label{theta-s}
 \Theta_s^\Sigma \equiv \sum_{i=1}^n x_i  \otimes Y_{i,s}^\Sigma: M \otimes k[V_\Sigma]  
 \to M \otimes k[V_\Sigma], 
 \end{equation} via
 $$
  m \otimes 1 \mapsto  \sum_i x_im \otimes  Y^\Sigma_{i,s}. 
  $$
\end{defn}

We refer the reader to \cite[III.12]{Har} for the 
definition of an upper/lower semi-continuous function on a topological space. 
\begin{thm}
\label{upper-lower}
Let $M$ be a $\fu(\fg)$-module, $r$ a positive integer, and $j$ an integer satisfying 
$1 \leq j \leq (p-1)r$.  Then  the local $(r,j)$-radical 
rank of $M$ is a lower semicontinuous function and 
the local $(r,j)$-socle rank of $M$ is an upper
semicontinuous function on $\bE(r,\fg)$.
\end{thm}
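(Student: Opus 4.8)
The plan is to work locally on each affine chart $V_\Sigma = \bE(r,\fg)\cap U_\Sigma$, using the generic element machinery of Notation~\ref{note2}. Fix $\Sigma$ and consider the $k[V_\Sigma]$-module endomorphisms $\Theta_1^\Sigma,\ldots,\Theta_r^\Sigma$ of $M\otimes k[V_\Sigma]$ from Definition~\ref{defn:Theta}. Because each $\epsilon\in V_\Sigma$ is an abelian subalgebra and the $Y_{i,s}^\Sigma$ are scalars, the $\Theta_s^\Sigma$ pairwise commute as operators on $M\otimes k[V_\Sigma]$, and each is $p$-nilpotent: specializing at a point $\epsilon$ recovers the action of the basis vector $u_s = \sum_i Y_{i,s}^\Sigma(\epsilon)\,x_i$ of $\epsilon$ on $M$. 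Thus for any multi-index $(j_1,\ldots,j_r)$ with $\sum j_s = j$, the composite $(\Theta_1^\Sigma)^{j_1}\cdots(\Theta_r^\Sigma)^{j_r}$ is a $k[V_\Sigma]$-linear endomorphism of the free module $M\otimes k[V_\Sigma]$ whose specialization at $\epsilon$ is exactly $u_1^{j_1}\cdots u_r^{j_r}\colon M\to M$.

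First I would assemble these into a single map. Let $N$ be the number of multi-indices $(j_1,\ldots,j_r)$ with nonnegative entries summing to $j$, and define
\begin{equation}
\label{bigmap}
\Phi_j^\Sigma \ = \ \bigoplus_{j_1+\cdots+j_r = j} (\Theta_1^\Sigma)^{j_1}\cdots(\Theta_r^\Sigma)^{j_r}\colon M\otimes k[V_\Sigma] \ \longrightarrow\ (M\otimes k[V_\Sigma])^{\oplus N}.
\end{equation}
This is a homomorphism of finitely generated free $k[V_\Sigma]$-modules, hence represented by a matrix over $k[V_\Sigma]$. For $\epsilon\in V_\Sigma$, the rank of the specialized matrix $\Phi_j^\Sigma(\epsilon)$ equals $\dim\Rad^j(\epsilon^*M)$, since the image of $\Phi_j^\Sigma(\epsilon)$ has the same dimension as the sum $\sum_{j_1+\cdots+j_r=j}\Im(u_1^{j_1}\cdots u_r^{j_r})=\Rad^j(\epsilon^*M)$ inside $M$ (projecting the direct sum of images back to the single copy of $M$ is a bijection on the span of the images, since each $u_1^{j_1}\cdots u_r^{j_r}$ lands in $M$). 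Now invoke the standard fact that for a map of free modules over a variety, the rank of the fiber is a lower semicontinuous function of the point: the locus where the rank is $\le t$ is cut out by the vanishing of all $(t+1)\times(t+1)$ minors, hence closed. Therefore $\epsilon\mapsto\dim\Rad^j(\epsilon^*M)$ is lower semicontinuous on each $V_\Sigma$, and since the $V_\Sigma$ form an open cover of $\bE(r,\fg)$, it is lower semicontinuous on $\bE(r,\fg)$.

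For the socle statement, I would deduce it from the radical statement by duality rather than repeating the argument. By Remark~\ref{rem:perp}, $\Soc^j(\epsilon^*(M^\#))\simeq(\Rad^j(\epsilon^*M))^\perp$, so $\dim\Soc^j(\epsilon^*(M^\#)) = \dim M - \dim\Rad^j(\epsilon^*M)$. Hence $\epsilon\mapsto\dim\Soc^j(\epsilon^*(M^\#))$ is $\dim M$ minus a lower semicontinuous function, i.e.\ upper semicontinuous; applying this to the module $M^\#$ in place of $M$ (and using $(M^\#)^\#\simeq M$) gives that $\epsilon\mapsto\dim\Soc^j(\epsilon^*M)$ is upper semicontinuous. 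Alternatively one could run the minor argument directly on the kernel: $\dim\Soc^j(\epsilon^*M) = \dim M - \operatorname{rk}\Psi_j^\Sigma(\epsilon)$ where $\Psi_j^\Sigma = \sum_{j_1+\cdots+j_r=j}(\Theta_1^\Sigma)^{j_1}\cdots(\Theta_r^\Sigma)^{j_r}$ viewed as a map into a single copy of $M\otimes k[V_\Sigma]$ is replaced by the product map $M\otimes k[V_\Sigma]\to (M\otimes k[V_\Sigma])^{\oplus N}$ whose kernel at $\epsilon$ is the intersection of the kernels.

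The one genuine point requiring care — and the step I expect to be the main obstacle — is the bookkeeping showing that $\operatorname{rk}\Phi_j^\Sigma(\epsilon)$ really computes $\dim\Rad^j(\epsilon^*M)$ and not merely an upper bound: one must be careful that the various $u_1^{j_1}\cdots u_r^{j_r}$, which all map $M\to M$, are correctly organized so that the rank of their "stacked" map equals the dimension of the sum of their images, not the dimension of the direct sum. This is resolved by instead forming the map $M^{\oplus N}\to M$, $(m_{(j_1,\ldots,j_r)})\mapsto\sum u_1^{j_1}\cdots u_r^{j_r}m_{(j_1,\ldots,j_r)}$, whose image is literally $\Rad^j(\epsilon^*M)$ and whose rank is therefore $\dim\Rad^j(\epsilon^*M)$; the corresponding $k[V_\Sigma]$-linear map $(M\otimes k[V_\Sigma])^{\oplus N}\to M\otimes k[V_\Sigma]$ has the desired specializations, and lower semicontinuity of fiber rank applies verbatim. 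The rest of the argument — commutativity and $p$-nilpotence of the $\Theta_s^\Sigma$, compatibility of specialization with composition (\eqref{eq:matrix}), and the minor-locus description of the rank-$\le t$ set — is routine.
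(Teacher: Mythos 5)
Your argument is correct and is essentially the paper's own proof: on each chart $V_\Sigma$ the paper applies lower semicontinuity of fiber rank to exactly your corrected map $(M\otimes k[V_\Sigma])^{\oplus r(j)}\to M\otimes k[V_\Sigma]$ obtained by summing the composites $(\Theta_1^\Sigma)^{j_1}\cdots(\Theta_r^\Sigma)^{j_r}$, and then deduces the socle statement from Remark~\ref{rem:perp} by duality, just as you do. Note only that your provisional stacked map $M\to M^{\oplus N}$ computes $\dim M-\dim\Soc^j(\epsilon^*M)$ rather than $\dim\Rad^j(\epsilon^*M)$ (projecting the direct sum of the images onto their span need not be injective), but your final paragraph already replaces it with the correct summed map, which is the one the paper uses.
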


\begin{proof}  It suffices to show that the local $(r,j)$-radical rank of $M$ is lower semi-continuous 
when restricted along each of the open immersions $V_\Sigma \subset \bE(r,\fg)$.
For $\epsilon \in V_\Sigma$ with residue field $K$, the specialization 
of $\Theta_s^\Sigma$ at $\epsilon$ defines a linear operator
$\Theta^\Sigma_s(\epsilon) =  \sum_{i=1}^nY_{i,s}^\Sigma(\epsilon) x_i$ on $M_K$: 
$$
m \mapsto \Theta_s^\Sigma(\epsilon) \cdot m = 
\sum_{i=1}^nY_{i,s}^\Sigma(\epsilon) x_im.
$$
Since the columns of $[Y^\Sigma_{i,s}(\epsilon)]$ generate $\epsilon$ by \eqref{eq:matrix}, we get that 
\begin{equation}
\label{rad-ep}
\Rad(\epsilon^*M) \ = \ \sum_{s=1}^r \Im\{\Theta_s^\Sigma(\epsilon)  : M_K \to M_K \}
\end{equation}
and 
\begin{equation}
\label{rad-j}
\Rad^j(\epsilon^*M) \ = \ \sum\limits_{j_1 + \cdots + j_r = j}
\Im\{ \Theta_1^\Sigma(\epsilon)^{j_1}\ldots   \Theta_r^\Sigma(\epsilon)^{j_r}: M_K \to M_K \} = 
\end{equation}
\[
\Im\{\bigoplus\limits_{j_1 + \cdots + j_r = j} \Theta_1^\Sigma(\epsilon)^{j_1}\ldots   
\Theta_r^\Sigma(\epsilon)^{j_r}:M_K^{\oplus r(j)} \to M_K\}
\]
where $r(j)$ is the number of ways to write $j$ as the sum of non-negative integers $j_1 + \cdots +j_r$.
Hence, the usual argument for lower semicontinuity of the dimension of images of a homomorphism
of finitely generated free modules applied to the $k[V_\Sigma]$-linear map 
\[\bigoplus_{j_1 + \cdots + j_r = j} (\Theta_1^\Sigma)^{j_1}\ldots   (\Theta_r^\Sigma)^{j_r}:
(M \otimes k[V_\Sigma] )^{\oplus r(j)} \to M \otimes k[V_\Sigma]. 
\]
enables us to conclude  that the function
\begin{equation}
\label{lowerc}
\epsilon \in \bE(r,\fg) \ \mapsto \ \dim\Rad^j(\epsilon^*M) \quad \text{ is lower semi-continuous}.
\end{equation}

The upper semi-continuity of socle ranks now follows by Remark~\ref{rem:perp}.
 \end{proof}

\begin{remark}
\label{concrete}
To get some understanding of the operators $\Theta_s^\Sigma(\epsilon)$ occurring in the
proof of Theorem \ref{upper-lower}, we work out the very special case in which $\fg = \fg_a \oplus \fg_a$,
$r = 1$ (so that $\bE(r,\fg) = \bP^1$), and $j=1$.  We fix a basis $\{ x_1, x_2 \}$  for $\fg$ 
which induces the identification $\fg \simeq \bA^2$. 
The two possibilities for $\Sigma \subset \{ 1, 2 \}$ are $\{ 1 \}, \{ 2 \}$.
Let $k[T_1,T_2]$ be the coordinate ring for $\bA^2$ (corresponding to the fixed basis $\{x_1, x_2\}$.   Let $\Sigma = \{1\}$. 
We have  $V_{\{1\}} = U_{\{1\}} = \{[a:b] \, | \, a \not = 0 \} \simeq \bA^1$ and the section $s_{\{1\}}: V_{\{1\}} \to \bA^2$ 
given explicitly as $[a:b] \mapsto (1,b/a)$. 
The corresponding map of coordinate algebras as in \eqref{quot} is given by 
\[k[\bA^2] = k[T_1, T_2] \to k[V_{\{1\}}]\simeq k[\bA^1]\]
$$ T_1 \mapsto 1, T_2 \mapsto s_{\{1\}}^*(T_2)$$

Then for a $\fu(\fg)$-module $M$, $\epsilon = \langle a,b \rangle \in \bP^1$ with $a \not= 0$, and $m \in M$, we have
\begin{equation}
\label{Theta1}
\Theta^{ \{ 1 \}} \ = \  x_1  \otimes 1 + x_2 \otimes s_{\{1\}}^*(T_2) : M \otimes k[V_{\{1\}}] \to M \otimes k[V_{\{1\}}];
\end{equation}
$$\Theta^{ \{ 1 \} }(\epsilon) \ = \ x_1 + \frac{b}{a} x_2, \quad m \mapsto x_1(m) + \frac{b}{a}x_2(m).$$

\end{remark}

We extend the formulation of ``generalized support varieties"  introduced in  \cite{FP4}
for $r=1$ and in \cite{CFP2} for elementary abelian $p$-groups (or, equivalently, for
$\fg = \fg_a^{\oplus r}$) to any $r$ and an arbitrary $p$-restricted Lie algebra $\fg$.  

\begin{defn}
\label{def:radvar} For any finite dimensional $\fu(\fg)$-module $M$,  any positive integer $r$, and any 
$j$, $1 \leq j \leq (p-1)r$, we define
\[\bRad^j(r,\fg)_M \ \equiv \  \{ \epsilon \in \bE(r,\fg):  \dim(\Rad^j(\epsilon^*M)) 
< \max\limits_{\epsilon^\prime \in \bE(r,\fg)} \dim\Rad^j(\epsilon^{\prime*}M) \}
\]
\[ \bSoc^j(r,\fg)_M \ \equiv \  \{ \epsilon \in \bE(r,\fg):  \dim(\Soc^j(\epsilon^*M)) 
> \min \limits_{\epsilon^\prime \in \bE(r,\fg)} \dim\Soc^j(\epsilon^{\prime*}M) \}
\]
\end{defn}

These notions are somewhat similar to the support varieties. For example, we have
the following.

\begin{lemma} \label{lem:radsoc}
Suppose that $\bE(r,\fg)_M \neq \bE(r,\fg)$. Then $\bRad^1(r,\fg)_M \ \simeq \ \bE(r,\fg)_M \
\simeq \ \bSoc^1(r,\fg)_M$.
\end{lemma}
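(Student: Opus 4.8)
The plan is to unwind the three sets in Definition~\ref{def:radvar} at $j=1$ and match them up with $\bE(r,\fg)_M$ using the elementary fact that, for a module $N$ over the local ring $\fu(\epsilon)$ with $\epsilon$ elementary of dimension $r$, $N$ is free (equivalently projective) if and only if $\dim\Rad^1(N)$ attains its maximum value $(1-\tfrac1p)\dim N$, equivalently if and only if $\dim\Soc^1(N)$ attains its minimum value $\tfrac1p\dim N$. Concretely, $\Rad(\fu(\epsilon)) = \Rad(\fu(\epsilon))\cdot\fu(\epsilon)$ has codimension $1$ in $\fu(\epsilon)$, so $\dim\Rad^1(N) = \dim N - \dim(N/\Rad(\fu(\epsilon))N)$, and projectivity of $N$ forces $N$ to be a sum of copies of the regular representation $\fu(\epsilon)$, for which $\dim\Rad^1 = \dim N - \dim N/p$. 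This is the standard argument already used implicitly in \cite{CFP2} for elementary abelian $p$-groups, and it transfers verbatim since $\fu(\epsilon)\simeq k(\bZ/p)^{\times r}$.

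First I would observe that, over $\bE(r,\fg)$, the local $(r,1)$-radical rank is bounded above by $(1-\tfrac1p)\dim M$ pointwise, and that this bound is attained at $\epsilon$ exactly when $\epsilon^*M$ is free, i.e.\ exactly when $\epsilon\notin\bE(r,\fg)_M$. The hypothesis $\bE(r,\fg)_M\neq\bE(r,\fg)$ guarantees there is at least one $\epsilon'$ with $\epsilon'^*M$ free, so $\max_{\epsilon'}\dim\Rad^1(\epsilon'^*M) = (1-\tfrac1p)\dim M$. Therefore $\dim\Rad^1(\epsilon^*M)$ is strictly below this maximum precisely when $\epsilon\in\bE(r,\fg)_M$; this is the identity $\bRad^1(r,\fg)_M = \bE(r,\fg)_M$ as sets. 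The identification $\bSoc^1(r,\fg)_M = \bE(r,\fg)_M$ is the dual statement: $\dim\Soc^1(\epsilon^*M)\geq\tfrac1p\dim M$ always, with equality iff $\epsilon^*M$ is free, so $\dim\Soc^1$ exceeds the minimum $\tfrac1p\dim M$ exactly on $\bE(r,\fg)_M$; alternatively one invokes Remark~\ref{rem:perp}, since $\epsilon^*M$ is free iff $\epsilon^*(M^\#)$ is free and $\bE(r,\fg)_M = \bE(r,\fg)_{M^\#}$. Finally, since $\bE(r,\fg)_M$ is closed by Proposition~\ref{closed}, the scheme-theoretic structures agree as well, giving the asserted isomorphisms; I would just record the three sets coincide and note each carries its reduced closed subscheme structure.

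The only genuine point needing care — the main, if modest, obstacle — is the equivalence ``$\epsilon^*M$ free $\iff$ $\dim\Rad^1(\epsilon^*M)$ maximal.'' One direction is immediate: a free module is a sum of regular representations, for which the radical has the stated codimension. For the converse I would argue that $\fu(\epsilon)$ being a finite-dimensional (self-injective) local algebra, any finitely generated module $N$ satisfies $\dim\Rad(\fu(\epsilon))\cdot N = \dim N - \dim(N/\Rad\cdot N)$ and $N/\Rad\cdot N$ has dimension equal to the number of generators $\mu(N)$ of $N$; meanwhile $\dim N \le p^r\mu(N)$ with equality iff $N$ is free. A short computation then shows $\dim\Rad^1(N)$ is maximal iff $\dim N = p^r\mu(N)$, i.e.\ iff $N$ is free. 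I expect to cite \cite{CFP2}~(2.2) or the analogous lemma there rather than reprove this in full.
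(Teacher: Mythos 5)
Your proposal is correct and follows essentially the same route as the paper: the paper likewise characterizes freeness of $\epsilon^*M$ by the minimal number of generators via Nakayama's Lemma (any surjection $\fu(\fc)^n \to \fc^*M$ with $n=\dim M/p^r$ would force freeness), concludes that $\dim\Rad^1$ attains its maximum exactly off $\bE(r,\fg)_M$, and handles $\bSoc^1$ by the dual argument, just as you do via Remark~\ref{rem:perp}. One small slip to fix: the extremal values you quote should involve $p^r$, not $p$ --- the maximum of $\dim\Rad^1(\epsilon^*M)$ is $(1-p^{-r})\dim M$ and the minimum of $\dim\Soc^1(\epsilon^*M)$ is $p^{-r}\dim M$, since $\dim\fu(\epsilon)=p^r$; this does not affect your argument, whose operative step is the correct inequality $\dim N\le p^r\mu(N)$ with equality iff $N$ is free.
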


\begin{proof}
The hypothesis implies that there exists an elementary subalgebra $\epsilon$ such that 
$\epsilon^* M$ is a free $\fu(\epsilon)$-module. Let $n = \Dim(M)/p^r$. Then we have an
isomophism of $\fu(\epsilon)$-modules, $\fu(\epsilon)^n \simeq \epsilon^* M$. If on the
other hand,  $\fc$ is an elementary subalgebra that does not act freely on 
$M$, then any homomoprhism $\xymatrix{\fu(\fc)^n \ar[r] & \fc^* M}$
must fail to be surjective, as otherwise it would be an isomorphism. It follows that 
the dimension of $\fc^* M/\Rad(\fc^* M)$ is larger 
than $n$, by Nakayama's Lemma. So $\Dim(\Rad(\fc^* M))$ is less than 
$\Dim(\Rad(\epsilon^* M))$. The proof that $\bSoc^j(r,\fg)_M = \bE(r,\fg)_M$ 
is a dual argument. 
\end{proof}

\begin{thm}
\label{thm:radvar} 
Let $M$ be a finite dimensional $\fg$-module, and let $r,j$ be positive integers 
such that $1 \leq j\leq (p-1)r$. Then 
$\bRad^j(r,\fg)_M$, $\bSoc^j(r,\fg)_M$ are proper closed subvarieties in $\bE(r,\fg)$.  
\end{thm}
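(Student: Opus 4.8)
The plan is to derive the theorem directly from Theorem~\ref{upper-lower}, the only additional input being the elementary fact that a bounded integer-valued function attains its extrema, so that the strict inequalities defining $\bRad^j(r,\fg)_M$ and $\bSoc^j(r,\fg)_M$ can be rewritten as non-strict ones. We may assume $\bE(r,\fg)\neq\emptyset$, the empty case being trivial.

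First I would recall the convention of \cite[III.12]{Har}: a $\bZ$-valued function $f$ on a topological space is lower semicontinuous exactly when $\{x : f(x) \leq c\}$ is closed for every $c\in\bZ$, and upper semicontinuous exactly when $\{x : f(x) \geq c\}$ is closed for every $c$. Next, set
\[
N \ = \ \max_{\epsilon \in \bE(r,\fg)} \dim \Rad^j(\epsilon^*M).
\]
Since $\epsilon \mapsto \dim\Rad^j(\epsilon^*M)$ takes values in the finite set $\{0,1,\dots,\dim_k M\}$, this maximum is a well-defined integer attained at some $\epsilon_0 \in \bE(r,\fg)$, and
\[
\bRad^j(r,\fg)_M \ = \ \{\epsilon \in \bE(r,\fg) : \dim\Rad^j(\epsilon^*M) \leq N-1\}.
\]
By Theorem~\ref{upper-lower} the local $(r,j)$-radical rank is lower semicontinuous, so the right-hand side is a closed subset of $\bE(r,\fg)$; with its reduced induced structure it is a closed subvariety, and it is proper because $\epsilon_0$ does not belong to it.

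For the socle statement I would argue in the dual fashion: put $m = \min_{\epsilon \in \bE(r,\fg)} \dim\Soc^j(\epsilon^*M)$, attained at some $\epsilon_1$, and rewrite $\bSoc^j(r,\fg)_M = \{\epsilon : \dim\Soc^j(\epsilon^*M) \geq m+1\}$, which is closed by the upper semicontinuity of the local $(r,j)$-socle rank (again Theorem~\ref{upper-lower}) and proper since $\epsilon_1\notin\bSoc^j(r,\fg)_M$. Alternatively one can apply the radical case to the dual module $M^\#$ and transport the conclusion across the identification $\Soc^j(\epsilon^*(M^\#)) \simeq (\Rad^j(\epsilon^*M))^\perp$ of Remark~\ref{rem:perp}, since $\dim\Soc^j(\epsilon^*(M^\#)) = \dim_k M - \dim\Rad^j(\epsilon^*M)$ identifies the two loci. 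There is no real obstacle: all the work is in Theorem~\ref{upper-lower}, and what remains is only the bookkeeping that the extrema are attained so that the defining sets are both closed and proper.
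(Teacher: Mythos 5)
Your proposal is correct and is essentially the paper's own argument: the paper's proof consists of the single line that the theorem "follows immediately from Theorem~\ref{upper-lower}," and your write-up simply spells out the routine bookkeeping (the extrema are attained since the rank functions take finitely many integer values, so the defining strict inequalities describe sub/superlevel sets, which are closed by the semicontinuity of Theorem~\ref{upper-lower} and proper because the extremal points are excluded). Nothing further is needed.
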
 
\begin{proof} Follows immediately from Theorem~\ref{upper-lower}.
\end{proof}

One approach to our first application, requires the following elementary fact.
\begin{lemma}
\label{le:comb}
Let $k[x_1, \ldots, x_n]$ be a polynomial ring, let $x_1^{i_1}\ldots x_n^{i_n}$ be a
monomial of degree $i$ and assume that $p=char \, k >i$.
There exist linear polynomials without constant term  $\lambda_0, \ldots, \lambda_m$
on the variables $x_1, \ldots, x_n$, and scalars $a_0, \ldots, a_m \in k$ such that
\[
x_1^{i_1}\ldots x_n^{i_n}  = a_0\lambda_0^i + \ldots + a_m\lambda_m^i.
\]
\end{lemma}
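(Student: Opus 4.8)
The plan is to use a polarization-type identity for homogeneous polynomials to write an arbitrary degree-$i$ monomial as a linear combination of $i$-th powers of linear forms. The key point is that since $p = \operatorname{char} k > i$, the relevant combinatorial coefficients (factorials, binomial coefficients up to $i$) are invertible in $k$, so the classical characteristic-zero polarization formula works verbatim.

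First I would reduce to a single general fact: in $k[y_1,\ldots,y_t]$ with $t \le i$ and $p > i$, the monomial $y_1 y_2 \cdots y_t \cdot y_1^{c_1}\cdots$ (more precisely, any monomial $y_1^{e_1}\cdots y_t^{e_t}$ of total degree $i$ with all $y_j$ actually appearing, $e_j \ge 1$) lies in the span of $i$-th powers of linear forms in $y_1,\ldots,y_t$. Given our monomial $x_1^{i_1}\cdots x_n^{i_n}$ of degree $i$, only the variables with $i_\ell \neq 0$ matter, and there are at most $i$ of them; rename these $y_1,\ldots,y_t$ with $t \le i$. So it suffices to prove the statement for a monomial involving each of $y_1,\ldots,y_t$ to a positive power.

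The main step is the polarization identity. The cleanest route: it is classical that over a ring where $i!$ is invertible, for linear forms $\ell_1,\ldots,\ell_t$,
\[
\sum_{\varnothing \neq S \subseteq \{1,\ldots,t\}} (-1)^{t-|S|}\Bigl(\sum_{j\in S}\ell_j\Bigr)^i
\]
is a linear combination (with coefficients that are multinomial numbers, hence integers, bounded by quantities that are units since $p>i$) of the monomials of degree $i$ in $\ell_1,\ldots,\ell_t$ in which \emph{every} $\ell_j$ appears; and by an inclusion–exclusion / induction on $t$ one isolates any single such monomial. Concretely, I would induct on the number $t$ of variables appearing: for $t=1$ the monomial is $y_1^i = 1\cdot (y_1)^i$, trivially of the desired form. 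For the inductive step, I would apply the operator "take the degree-$i$ part, subtract off the contributions of proper subsets of the variables" — i.e. use that $\bigl(\sum_{j\in S}\lambda_j\bigr)^i$ expanded by the multinomial theorem has, among its terms, the monomial $y_1^{e_1}\cdots y_t^{e_t}$ with coefficient $\binom{i}{e_1,\ldots,e_t}$, a unit in $k$ since $p>i\ge e_1+\cdots+e_t$; all other monomials in this expansion either involve fewer of the $y_j$ (handled by induction, after subtracting multiples of $\bigl(\sum_{j\in S'}\lambda_j\bigr)^i$ for $S'\subsetneq S$) or are monomials of degree $i$ in the same $t$ variables with a different exponent vector, which one removes by running the same argument for each such vector and solving the resulting (upper-triangular, unit-diagonal) linear system. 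Each $\lambda$ produced is a linear form with no constant term, as required.

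The step I expect to be the main obstacle is bookkeeping the inclusion–exclusion so that it is genuinely finite and the linear system one inverts is visibly nonsingular over $k$ — one must be careful that the only coefficients ever inverted are factorials/multinomials of numbers $\le i < p$, never anything divisible by $p$. I would organize this by fixing the monomial order by exponent vectors of degree $i$ in the $t$ relevant variables and showing the "matrix" expressing $\{\bigl(\sum_{j\in S}\lambda_j^{(v)}\bigr)^i\}$ (for suitable scalings $\lambda_j^{(v)}$, e.g. $\lambda_j = \zeta^{?} y_j$ as in a Vandermonde trick, or simply by varying the integer coefficients) in terms of the monomial basis is triangular with unit diagonal; alternatively, since the claim only asserts \emph{existence} of the $\lambda_\ell$ and $a_\ell$, I can avoid explicit inversion and argue by dimension count that the $i$-th powers of linear forms span the space of all degree-$i$ forms in $t\le i$ variables, using that this span is $\operatorname{GL}_t$-stable, contains $y_1^i$, and hence contains its orbit, whose span is the whole space of degree-$i$ forms (this last fact again needing $p>i$, via the standard fact that $S^i(k^t)$ is generated as a $\operatorname{GL}_t$-representation — indeed spanned — by the $i$-th powers of vectors when $i<p$).
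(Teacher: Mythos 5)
Your overall strategy -- use $p>i$ to invert the combinatorial coefficients relating $i$-th powers of linear forms to monomials -- is the right one, but the concrete route you lead with has a genuine gap. The signed sum $\sum_{\emptyset\neq S\subseteq\{1,\dots,t\}}(-1)^{t-|S|}\bigl(\sum_{j\in S}y_j\bigr)^i$ isolates a monomial only in the squarefree case $t=i$; when some exponent is $\geq 2$, the $i$-th powers of the $0/1$-coefficient subset sums simply do not span enough of $S^i$. For instance with $t=2$, $i=3$, the monomial $y_1^2y_2$ is \emph{not} in the span of $y_1^3$, $y_2^3$, $(y_1+y_2)^3$: matching the coefficients of $y_1^2y_2$ and $y_1y_2^2$ forces $3c=1$ and $3c=0$ simultaneously. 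So ``inclusion--exclusion / induction on $t$'' with these particular forms cannot isolate a single monomial, and the claimed upper-triangular, unit-diagonal system never materializes; indeed there are only $2^t-1$ subset sums against $\binom{i+t-1}{t-1}$ monomials of degree $i$. Your fallback is also not a proof as stated: justifying that the span of the $\GL_t$-orbit of $y_1^i$ is all of $S^i(k^t)$ ``via the standard fact that $S^i(k^t)$ is spanned by $i$-th powers of vectors when $i<p$'' is circular, since that fact \emph{is} the lemma. (It could be rescued by instead citing irreducibility of $S^i(k^t)$ as a $\GL_t$-module for $i<p$, but that input must be supplied, not assumed.)

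The fix you gesture at in your ``main obstacle'' paragraph -- vary the scalar coefficients of the linear forms and verify nonsingularity by a Vandermonde computation -- is exactly what the paper does, and it is cleanest with a different reduction than yours: reduce to \emph{two} variables by induction on $n$ (write $x_1^{i_1}\cdots x_{n-1}^{i_{n-1}}$ as a combination of powers $\mu^{\,i-i_n}$ of linear forms by the inductive hypothesis, then treat each $\mu^{\,i-i_n}x_n^{i_n}$ as a two-variable monomial in $\mu$ and $x_n$). In two variables one takes the $i+1$ forms $\lambda_j=jx+y$, $j=0,\dots,i$; the matrix expressing the $\lambda_j^i$ in the monomial basis $x^sy^{i-s}$ has entries $\binom{i}{s}j^s$, and after dividing each column by $\binom{i}{s}$ (a unit since $p>i$) it is a Vandermonde matrix in the scalars $0,1,\dots,i$, which are pairwise distinct in $k$ precisely because $p>i$. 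That square, visibly invertible system is the piece missing from your sketch; your reduction to all $t\leq i$ relevant variables at once leaves you needing a much larger and less transparent nonsingularity claim.
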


\begin{proof}
It suffices to prove the statement for  $n=2$, thanks to an easy induction argument (with respect to $n$).
Hence, we assume that we have only two variables, $x$ and $y$.

Let $\lambda_j = jx+y$ for $j=0, \ldots, i$, so that we have $i+1$ equalities
of the form $(jx+y)^i =\lambda_j^i$ for $j = 0, \dots, i$.
Treating monomials on $x,y$ as variables, we interpret this as a system of $i+1$
 equations on $i+1$ variables with the matrix
\[
\begin{pmatrix}0&0&\ldots& 0 &\ldots& 0&1& \\
1& i &  \ldots & i \choose j & \ldots & i& 1&   \\
2^i& 2^{i-1}i & \ldots & 2^{i-j} {i \choose j} & \ldots &2i& 1&   \\
\vdots&\vdots&\ddots&\vdots&\ddots&\vdots&\vdots& \\
i^i& i^{i-1}i &  \ldots & i^{i-j} {i \choose j} & \ldots &i^2& 1&
\end{pmatrix}
\]
By canceling the  coefficient ${i \choose j}$ 
in the $(j+1)$-st column (which is
 non-trivial since $p>i$)
we reduce the determinant of this matrix to 
a non-trivial Vandermonde determinan
t. Hence, the matrix is invertible.
We conclude the monomials $x^jy^{i-j}$ can be 
expressed as linear combinations of the free terms
$\lambda_0^i, \ldots, \lambda_i^i$.
\end{proof}

Determination of the closed subvarieties 
$\bRad^j(r,\fg)_M , \ \bSoc^j(r,\fg)_M$ of
$\bE(r,\fg)$ appears to be highly non-trivial.  The 
reader will find a few computer-aided calculations
in \cite{CFP2} for $\fg = \fg_a^{\oplus n}$.  The following 
proposition presents some information for $\bE({n-1}, \gl_n)$.

\begin{prop}
\label{open_orbit}
Assume that $p \geq n$.
Let $X \in \gl_n$ be a regular nilpotent element, and let 
$\epsilon \in \bE({n-1}, \gl_n)$ be an $n-1$-plane with
 basis $\{X, X^2, \ldots, X^{n-1}\}$. Then 
$\GL_n \cdot \epsilon$ is an open $\GL_n$-orbit for $\bE({n-1},\gl_n)$.
\end{prop}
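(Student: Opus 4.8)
The plan is to use the same circle of ideas as in Proposition~\ref{prop:sub}, replacing the notion of regular \emph{element} by the specific $(n-1)$-plane $\epsilon = \langle X, X^2, \dots, X^{n-1}\rangle$, but now the whole nilpotent part of the centralizer of $X$ is being used up. First I would recall that for a regular nilpotent $X \in \gl_n$, the centralizer $\fz_{\gl_n}(X)$ is the $n$-dimensional commutative algebra $k[X] = \langle 1, X, X^2, \dots, X^{n-1}\rangle$, and its intersection with $\cN_p(\gl_n)$ (which equals $\cN(\gl_n)$ since $p \geq n$) is exactly the $(n-1)$-dimensional space $\fz_{\gl_n}(X)^{\mathrm{nil}} = \langle X, X^2, \dots, X^{n-1}\rangle = \epsilon$. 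Thus $\epsilon$ is a genuine elementary subalgebra (it is commutative, consists of $p$-nilpotent elements, and has dimension $n-1$), and moreover it is the \emph{unique} elementary subalgebra of dimension $n-1$ containing $X$: any commuting nilpotent partner of $X$ lies in $\fz_{\gl_n}(X)^{\mathrm{nil}} = \epsilon$.

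The key geometric step is to show that the set of $\epsilon' \in \bE(n-1,\gl_n)$ containing \emph{some} regular nilpotent element is the single orbit $\GL_n\cdot\epsilon$ and is open. For this I would argue exactly as in Proposition~\ref{prop:sub}: let $Z \subset \cN_p(\gl_n)$ be the closed, $\GL_n$-stable complement of the regular nilpotent orbit (the subregular nilpotent closure, of codimension $\geq 2$), and let $\cC_{n-1}(Z)^\circ \subset \cC_{n-1}(\cN_p(\gl_n))^\circ$ be the corresponding closed $\GL_{n-1}$-stable subvariety of $(n-1)$-tuples of commuting linearly independent nilpotents, all of whose entries lie in $Z$. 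Any $(n-1)$-tuple of commuting linearly independent nilpotent matrices that contains a regular nilpotent element $X$ must, by the uniqueness observation above, span $\fz_{\gl_n}(X)^{\mathrm{nil}}$, hence is $\GL_{n-1}$-conjugate to $(X, X^2,\dots,X^{n-1})$, hence $(\GL_n\times\GL_{n-1})$-conjugate to the fixed tuple attached to a fixed regular nilpotent; consequently the preimage in $\cC_{n-1}(\cN_p(\gl_n))^\circ$ of $\GL_n\cdot\epsilon$ is exactly the complement of $\cC_{n-1}(Z)^\circ$, which is open and $\GL_{n-1}$-stable. Descending along the $\GL_{n-1}$-torsor $\cC_{n-1}(\cN_p(\gl_n))^\circ \to \bE(n-1,\gl_n)$ of Proposition~\ref{embed} shows $\GL_n\cdot\epsilon$ is open in $\bE(n-1,\gl_n)$.

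The main obstacle, and the point that needs the hypothesis $p \geq n$, is verifying the nonemptiness and correct dimension of $\epsilon$ — i.e. that $X^{n-1} \neq 0$ and that $X,\dots,X^{n-1}$ really are $p$-nilpotent (so that $\epsilon$ lands in $\cN_p(\gl_n)$ and not just in $\cN(\gl_n)$), which forces $p \geq n$ — together with the claim that $\cC_{n-1}(Z)^\circ$ is a \emph{proper} closed subset, so that its complement (hence the orbit) is nonempty. That last point follows since $\epsilon$ itself gives a point of $\cC_{n-1}(\cN_p(\gl_n))^\circ$ not lying in $\cC_{n-1}(Z)^\circ$. The rest is routine: the statement that a $\GL_{n-1}$-stable open subset of the total space of a Zariski-locally-trivial $\GL_{n-1}$-torsor descends to an open subset of the base is immediate, and irreducibility is not needed for the conclusion, though one may note (using Premet's result quoted in Example~\ref{ex:premet} or a direct dimension count) that $\bE(n-1,\gl_n)$ is in fact irreducible so that $\GL_n\cdot\epsilon$ is dense.
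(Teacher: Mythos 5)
Your argument is correct, but it is not the route the paper takes. You adapt the torsor-descent argument of Proposition~\ref{prop:sub}: after observing that the nilpotent part of the centralizer of a regular nilpotent $Y$ is exactly $\langle Y,\dots,Y^{n-1}\rangle$ (so that an $(n-1)$-dimensional elementary subalgebra containing a regular element is unique and conjugate to $\epsilon$, and any spanning tuple of such a subalgebra must contain a regular entry, since the non-regular elements form a hyperplane in it), you identify the preimage of the complement of $\GL_n\cdot\epsilon$ in $\cC_{n-1}(\cN_p(\gl_n))^\circ$ with the closed $\GL_{n-1}$-stable set $\cC_{n-1}(Z)^\circ$ and descend along the locally trivial $\GL_{n-1}$-torsor of Proposition~\ref{embed}; here $p\geq n$ enters only to make $X$ $p$-nilpotent. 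The paper instead proves openness representation-theoretically, using the defining module $V$: it shows $\dim\Rad^{n-1}(\epsilon'^{*}V)=1$ when $\epsilon'$ contains a regular nilpotent and, via Lemma~\ref{le:comb} (which is where $p>n-1$ is used a second time, to express degree-$(n-1)$ monomials as combinations of $(n-1)$-st powers), that $\Rad^{n-1}(\epsilon'^{*}V)=0$ otherwise; thus $\GL_n\cdot\epsilon$ is the complement of the closed set $\bRad^{n-1}(n-1,\gl_n)_V$ by the semicontinuity Theorem~\ref{upper-lower}. Your approach is more purely geometric, avoids Lemma~\ref{le:comb} and the radical-rank machinery, and is essentially the alternative the paper itself alludes to in Example~\ref{ex:gl3}; the paper's proof, on the other hand, is designed to illustrate the new invariants, exhibiting the open orbit explicitly as the complement of a generalized rank variety. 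Two small caveats: your parenthetical appeal to Premet's theorem for irreducibility only covers $r=2$ (it concerns commuting pairs), though you rightly note irreducibility is not needed; and the tuple containing a regular entry $Y$ is $\GL_{n-1}$-conjugate to $(Y,\dots,Y^{n-1})$, with the $\GL_n$-factor then needed to reach $(X,\dots,X^{n-1})$, as your subsequent clause in effect corrects.
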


\begin{proof}
Let $V$ be the defining $n$-dimensional representation of $\gl_n$.
Let $\epsilon^\prime$ be any elementary Lie subalgebra of
$\gl_n$ of dimension $n-1$.
If $\epsilon^\prime$ contains a regular nilpotent element $Y$, 
then $\epsilon^\prime$ has basis
$\{Y, Y^2, \ldots, Y^{n-1}\}$, since the centralizer of a 
regular nilpotent element in $\gl_n$ is generated
as a linear space by the powers of that nilpotent element.   
Hence, in this case $\epsilon^\prime$ is
conjugate to the fixed plane $\epsilon$.  Moreover, 
$\Rad^{n-1}(\epsilon^{\prime*}V) = \Im\{Y^{n-1}: V \to V\}$,
and, hence, $\dim \Rad^{n-1}(\epsilon^{\prime*}V) =1$.

Suppose $\epsilon^\prime$ does not contain a regular 
nilpotent element. Then for any matrix
$Y \in \epsilon^\prime$, we have $Y^{n-1} =0$.
Lemma~\ref{le:comb}  implies that any monomial of degree $n-1$ 
on elements of $\epsilon^\prime$ is trivial.
Therefore, $\Rad^{n-1}(\epsilon^{\prime*}V)=0$. We conclude 
that $\GL_n\cdot \epsilon$ is the complement
to $\bRad^{n-1}(n-1, \gl_n)_V$ in $\bE({n-1}, \gl_n)$.  
Theorem~\ref{upper-lower} now implies that
$\GL_n\cdot\epsilon$ is open.
\end{proof}

\begin{ex}
\label{ex:gl3} In this example we describe the geometry  of $\bE(2, \gl_3)$ making an 
extensive use of the $\GL_3$-action. Further calculations involving more geometry 
are currently being investigated.

Assume that $p>3$.
Fix a regular nilpotent element $X \in \gl_3$. Let 
$\epsilon_1 = \langle X, X^2\rangle$ be the 
2-plane in $\gl_3$ with the basis $X, X^2$, and let 
\[
C_1 = \GL_3 \cdot \,\epsilon_1 \subset \bE(2, \gl_3)
\]
be the orbit of $\epsilon_1$ in  $\bE(2, \gl_3)$.
By Proposition~\ref{prop:sub} or by Proposition \ref{open_orbit}
this is an open subset of $\bE(2, \gl_3)$.
Since $\bE(2, \gl_3)$ is irreducible (see Example~\ref{ex:premet}), 
$C_1$ is dense. We have $\dim C_1 = \dim \overline{C_1} = \dim \bE(2, \gl_3) =4$.

The closure of $C_1$ contains two more (closed) $\GL_3$ stable subvarieties, each one of dimension  
$2$. They are  the $\GL_3$ saturations in $\bE(2, \gl_3)$
of the elementary subalgebras $\fu_{1,2}$ (spanned by $E_{1,2}$ and 
$E_{1,3}$), and $\fu_{2,1}$ (spanned by $E_{1,3}$ and 
$E_{2,3}$). Since the stabilizer of $\fu_{1,2}$ (resp. $\fu_{2,1}$) is the standard parabolic $P_{1,2}$ (resp. $P_{2,1}$), 
the corresponding orbit is readily identified with $\GL_3/P_{1,2} \simeq 
\Grass(2,3) = \bP^2$ (resp., $\GL_3/P_{2,1} \simeq  \bP^2$) (see Remark~\ref{rem:sep}).
\end{ex}

\begin{prop} \label{prop:open} Let $\fu$ be a $p$-restricted Lie algebra with trivial $p$-restriction map.
Then the locus of elementary subalgebras $\epsilon \in \bE(r,\fu)$ such that $\epsilon$ is maximal (that is, 
not properly contained in any other elementary  subalgebra of $\fu$) is an open subset of $\bE(r,\fu)$. 
\end{prop}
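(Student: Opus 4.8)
The plan is to realize the maximality locus as the complement of the image of an incidence correspondence, and then invoke properness of the Grassmannian. Recall that since $\fu$ has trivial $p$-restriction, every $r$-dimensional commutative subalgebra is automatically elementary, so $\bE(r,\fu)$ is the closed subvariety of $\Grass(r,\fu)$ cut out purely by the vanishing of the Lie bracket (this is the content of Proposition~\ref{embed} with the $p$-operator conditions being vacuous). Consider, for $r < s \leq \dim\fu$, the incidence variety
\[
Z_{r,s} \ = \ \{ (\epsilon,\eta) \in \bE(r,\fu) \times \bE(s,\fu) : \epsilon \subset \eta \}.
\]
First I would check that $Z_{r,s}$ is closed in $\bE(r,\fu) \times \bE(s,\fu)$: the condition $\epsilon \subset \eta$ is closed inside $\Grass(r,\fu) \times \Grass(s,\fu)$ by the standard theory of flag varieties (it is cut out by the vanishing of the induced map $\epsilon \hookrightarrow \fu \twoheadrightarrow \fu/\eta$ on the tautological bundles), so its intersection with the closed subset $\bE(r,\fu) \times \bE(s,\fu)$ is closed.

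Next, let $\pi_{r,s} \colon Z_{r,s} \to \bE(r,\fu)$ be the first projection. Since $\bE(s,\fu)$ is projective (Proposition~\ref{embed}), the projection $\bE(r,\fu) \times \bE(s,\fu) \to \bE(r,\fu)$ is a closed map, hence so is its restriction $\pi_{r,s}$ to the closed subset $Z_{r,s}$. Therefore the image
\[
W_s \ = \ \pi_{r,s}(Z_{r,s}) \ = \ \{ \epsilon \in \bE(r,\fu) : \epsilon \text{ is contained in some } \eta \in \bE(s,\fu) \}
\]
is closed in $\bE(r,\fu)$. An elementary subalgebra $\epsilon$ of dimension $r$ fails to be maximal precisely when it is properly contained in some elementary subalgebra, equivalently (enlarging one dimension at a time, which is possible since a proper elementary overalgebra of dimension $>r+1$ contains one of dimension $r+1$ containing $\epsilon$) when $\epsilon \in W_{r+1}$. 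Hence the maximality locus is exactly $\bE(r,\fu) \setminus W_{r+1}$, which is open.

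I expect the only genuine point requiring care is the verification that $Z_{r,s}$ is closed — more precisely, that the incidence condition $\epsilon\subset\eta$ defines a closed subscheme of the product of Grassmannians. This is classical (it is the partial flag variety), but since the paper works concretely with Plücker charts and $\GL_r$-torsors (Proposition~\ref{note}), the cleanest bookkeeping is to observe on an affine chart $U_\Sigma \times U_{\Sigma'}$ that $\epsilon \subset \eta$ is expressed by the vanishing of finitely many polynomial equations in the chart coordinates (the condition that the columns of the representing matrix $A_\epsilon$ lie in the column span of $A_\eta$, i.e.\ that certain $(s{+}1)\times(s{+}1)$ minors of the augmented matrix vanish). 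Granting that, everything else is a formal application of properness. The reduction "not maximal $\iff$ contained in something of dimension exactly $r+1$" is immediate and needs no hypothesis beyond the fact that any elementary subalgebra strictly containing $\epsilon$ has dimension $\geq r+1$ and its own $(r{+}1)$-dimensional subspaces containing $\epsilon$ are again subalgebras (being subspaces of a commutative algebra with trivial restriction).
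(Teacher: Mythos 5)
Your argument is correct, but it is a genuinely different route from the paper's. The paper lets $\fu$ act on itself by the adjoint representation and observes that $\epsilon \subseteq \Soc(\epsilon^*(\fu_{\rm ad}))$ (the socle is the centralizer of $\epsilon$), with equality precisely when $\epsilon$ is maximal; here the hypothesis that the $p$-restriction vanishes on all of $\fu$ is what guarantees that a centralizing element outside $\epsilon$ spans an elementary enlargement. Thus the non-maximal locus is identified with the non-minimal socle variety $\bSoc^1(r,\fu)_{\fu_{\rm ad}}$, which is closed by the upper semicontinuity of local socle rank (Theorem \ref{upper-lower} / Theorem \ref{thm:radvar}). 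You instead realize the non-maximal locus as the image of the incidence (partial flag) correspondence $Z_{r,r+1} \subset \bE(r,\fu)\times\bE(r+1,\fu)$ under the first projection, which is closed because the flag condition is closed and $\bE(r+1,\fu)$ is projective; your one-step reduction (non-maximal iff contained in an elementary subalgebra of dimension exactly $r+1$) is sound, since any subspace of an elementary overalgebra is again elementary. What each approach buys: yours is more elementary and self-contained (no appeal to the Section 3 semicontinuity machinery), and — as you essentially note — it never uses the trivial restriction of $\fu$ itself, so it proves openness of the maximality locus in $\bE(r,\fg)$ for an arbitrary restricted Lie algebra $\fg$; the paper's proof is shorter given the tools already in hand and illustrates how the generalized socle varieties can be used, which fits the theme of that section. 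The only housekeeping your version needs is the degenerate situation where $\bE(r+1,\fu)$ is empty or $r+1=\dim\fu$ (so that $\Grass(r+1,\fu)$ is a point, outside the range of Proposition \ref{embed} as stated), both of which are trivial to dispose of.
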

\begin{proof} If no maximal elementary subalgebras are contained in $\bE(r,\fu)$, then the statement is clear. 
Hence, we may assume that there is at least one maximal elementary subalgebra $\epsilon \in \bE(r, \fu)$. 

Regard $\fu$ as acting on itself by the adjoint representation. 
Note that we necessarily have $\epsilon \subset \Soc(\epsilon^*(\fu_{\rm ad}))$. Moreover,  our hypothesis that $x^{[p]}=0$ 
for any $x \in \fu$ implies that this inclusion is an 
equality if and only if $\epsilon$ is a maximal elementary subalgebra. Hence,
\[\dim\Soc(\epsilon^*(\fu_{\rm ad})) \geq \dim \epsilon =r\]
with equality if and only if $\epsilon$ is maximal. We conclude that the locus of elementary subalgebras 
$\epsilon \in \bE(r,\fu)$ such that $\epsilon$ is nonmaximal equals the nonminimal 
socle variety $\bSoc^1(r, \fu)_{\fu_{\rm ad}}$. The statement now follows from 
Theorem~\ref{thm:radvar}.
\end{proof}


\section{Modules of constant  $(r,j)$-radical rank and/or constant $(r,j)$-socle rank}
\label{sec:constant}

In previous work with coauthors, we have considered the interesting class of modules
of constant Jordan type (see, for example, \cite{CFP1}).  In the terminology of this paper, these
are $\fu(\fg)$-modules $M$ with the property that the isomorphism type 
of $\epsilon^*M$ is independent of $\epsilon \in \bE(1, \fg)$.  In the special case $\fg = 
\fg_a^{\oplus n}$, further classes of special modules were considered by replacing this
condition on the isomorphism type of  $\epsilon^*M$ for $\epsilon \in \bE(1, \fg_a^{\oplus n})$
by the ``radical" or ``socle" type of $\epsilon^*M$ for $\epsilon \in \bE(r,\fg_a^{\oplus n})$.  

In this section, we consider $\fu(\fg)$-modules of constant $(r,j)$-radical rank and
constant $r$-radical type (and similarly for socles).
As already seen in \cite{CFP2} in the special case $\fg = \fg_a^{\oplus n}$, 
the variation of radical and socle behavior
for $r > 1$ can be quite different.  Moreover, having constant $r$-radical type
does not imply the constant behavior for a different $r$.

As we investigate in \cite{CFP4}, a $\fu(\fg)$-module of constant $(r,j)$-radical rank or
constant $(r,j)$-socle rank determines a vector bundle on $\bE(r,\fg)$, thereby providing good
motivation for studying such modules.  While a great many examples of such $\fu(\fg)$-modules, 
some well known, can be constructed from rational $G$-modules, there are 
numerous others which do not arise in this way. Some examples are given in 
\ref{ex:nullext}, \ref{prop:zeta} and \ref{prop:existzeta}. 
Although identifying the associated vector bundles is hard, some such vector bundles might
prove to be of geometric importance.

\begin{defn}
\label{defn:constant}  
Fix integers $r > 0$ and $ j, 1 \leq j < (p-1)r$.  A $\fu(\fg)$-module $M$
 is said to have constant $(r,j)$-radical rank (respectively, $(r,j)$-socle rank)
 if the dimension of $\Rad^j(\epsilon^*M)$ (respectively, $\Soc^j(\epsilon^*M)$)
is independent of $\epsilon \in \bE(r,\fg)$.

We say that $M$ has constant $r$-radical type (respectively, $r$-socle type) if $M$ 
has constant $(r,j)$-radical rank (respectively, $(r,j)$-socle rank) for all $j$.
\end{defn}

\begin{remark} For $r>1$, 
the condition that the $r$-radical type of $M$ is constant does not imply that the isomorphism
type of $\epsilon^*M$ is independent of $\epsilon \in \bE(r,\fg)$.    The condition that
$\dim \Rad^j(\epsilon^*(M)) = \dim \Rad^j(\epsilon^{\prime*}M)$ for all $j$ is much weaker
than the condition that $\epsilon^*M \simeq \epsilon^{\prime*}M$.  Indeed, examples are 
given in \cite{CFP2} (for $\fg = \fg_a^{\oplus n}$) of modules $M$ whose $r$-radical type
is constant but whose  $r$-socle type is not constant. In particular,  the isomorphism
type of $\epsilon^*M$ for such $M$ varies with $\epsilon \in \bE(r,\fg)$.
\end{remark}

\begin{prop}
\label{prop:el}
A $\fu(\fg)$-module 
$M$ has constant $(r,j)$-radical rank (respectively, $(r,j)$-socle rank) if and only if 
$\bR ad^j(r,\fg)_M = \emptyset$ (resp., $\bS oc^j(r,\fg)_M = \emptyset$.)
\end{prop}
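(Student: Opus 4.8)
The plan is simply to unwind Definition~\ref{def:radvar}: the equivalence is a tautology once one observes that the extrema appearing there are actually attained. First I would dispose of the trivial case $\bE(r,\fg)=\emptyset$, in which both conditions hold vacuously; otherwise I would record that $\epsilon\mapsto\dim\Rad^j(\epsilon^*M)$ takes values in the finite set $\{0,1,\dots,\dim M\}$, so that $c_{\max}\equiv\max_{\epsilon^\prime\in\bE(r,\fg)}\dim\Rad^j(\epsilon^{\prime*}M)$ exists and is achieved (and likewise the minimum $c_{\min}$ for the socle function).

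For the radical statement I would argue as follows. If $M$ has constant $(r,j)$-radical rank, then $\epsilon\mapsto\dim\Rad^j(\epsilon^*M)$ is constant with value $c_{\max}$, so no $\epsilon$ satisfies the strict inequality $\dim\Rad^j(\epsilon^*M)<c_{\max}$ defining $\bRad^j(r,\fg)_M$; hence $\bRad^j(r,\fg)_M=\emptyset$. Conversely, if $\bRad^j(r,\fg)_M=\emptyset$, then for every $\epsilon\in\bE(r,\fg)$ the inequality $\dim\Rad^j(\epsilon^*M)\le c_{\max}$ is forced to be an equality, so the function is constant and $M$ has constant $(r,j)$-radical rank.

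The socle statement is the same argument with $c_{\max}$ replaced by $c_{\min}$ and $\Rad^j$ by $\Soc^j$. Alternatively, I would deduce it from the radical case by passing to the dual: Remark~\ref{rem:perp} gives $\Soc^j(\epsilon^*(M^\#))\simeq(\Rad^j(\epsilon^*M))^\perp$, whence $\dim\Soc^j(\epsilon^*(M^\#))=\dim M-\dim\Rad^j(\epsilon^*M)$, so that $M^\#$ has constant $(r,j)$-socle rank if and only if $M$ has constant $(r,j)$-radical rank, and correspondingly $\bSoc^j(r,\fg)_{M^\#}=\bRad^j(r,\fg)_M$.

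No step here is a genuine obstacle; the only point deserving a remark is the existence of the extrema in Definition~\ref{def:radvar}, which is immediate from the bound $0\le\dim\Rad^j(\epsilon^*M)\le\dim M$ (one could instead invoke the semicontinuity of Theorem~\ref{upper-lower} together with quasi-compactness of the projective variety $\bE(r,\fg)$, but this is unnecessary).
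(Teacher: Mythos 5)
Your proposal is correct and follows essentially the same route as the paper, which likewise treats the statement as an immediate unwinding of Definition~\ref{def:radvar}: the rank is non-constant precisely when some $\epsilon$ has non-maximal radical rank (resp.\ non-minimal socle rank). Your added remarks on attainment of the extrema and the duality via Remark~\ref{rem:perp} are harmless elaborations of the same tautology.
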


\begin{proof}
This follows from the fact that there is a non-maximal radical rank if and only if
the radical rank is not constant, a non-minimal socle rank if and only if the socle rank is not
constant.
\end{proof}

\begin{prop}
\label{prop:single} 
 Let $G$ be an affine algebraic group, and let $\fg = \Lie(G)$.  
If $\bE(r,\fg)$ consists of a single $G$-orbit, then any finite dimensional
rational $G$-module has  constant $r$-radical type and constant $r$-socle type.
\end{prop}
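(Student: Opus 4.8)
The plan is to deduce this immediately from Proposition~\ref{prop:orbit}, which is the real content: it asserts that for $\fg = \Lie(G)$ and a rational $G$-module $M$, the functions
\[
\epsilon \in \bE(r,\fg) \ \mapsto \ \dim \Rad^j(\epsilon^*M), \qquad
\epsilon \in \bE(r,\fg) \ \mapsto \ \dim \Soc^j(\epsilon^*M)
\]
are constant on each $G$-orbit of $\bE(r,\fg)$, for every $j$ with $1 \leq j \leq (p-1)r$. So the first step is simply to invoke that proposition.

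The second step is to use the hypothesis: if $\bE(r,\fg)$ consists of a single $G$-orbit, then ``constant on each $G$-orbit'' becomes ``constant on all of $\bE(r,\fg)$.'' Thus for every admissible $j$, both $\dim\Rad^j(\epsilon^*M)$ and $\dim\Soc^j(\epsilon^*M)$ are independent of $\epsilon \in \bE(r,\fg)$.

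The final step is to match this against Definition~\ref{defn:constant}: independence of $\dim\Rad^j(\epsilon^*M)$ for a fixed $j$ is exactly constant $(r,j)$-radical rank, and independence for all $j$ is exactly constant $r$-radical type; likewise for socles. This completes the argument. I do not anticipate any obstacle here — the statement is a direct corollary of Proposition~\ref{prop:orbit} together with the single-orbit hypothesis, and no separate case analysis or extra input (e.g.\ semicontinuity from Theorem~\ref{upper-lower}) is needed, though one could alternatively phrase it via Proposition~\ref{prop:el} by noting that a $G$-stable (Proposition~\ref{closed}) and, on a transitive $G$-space, necessarily empty-or-everything set like $\bRad^j(r,\fg)_M$ must be empty once it is proper, which it is by Theorem~\ref{thm:radvar}.
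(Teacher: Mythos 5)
Your proposal is correct and is exactly the paper's argument: the paper proves Proposition~\ref{prop:single} by citing Proposition~\ref{prop:orbit} (constancy of the local $(r,j)$-radical and socle ranks on $G$-orbits), which together with the single-orbit hypothesis and Definition~\ref{defn:constant} gives the claim immediately. Nothing further is needed.
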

\begin{proof} 
This follows immediately from Proposition \ref{prop:orbit}.
\end{proof}

\begin{remark}
We point out that examples arising from Proposition~\ref{prop:single} 
have much stronger properties than constant radical or socle rank:  they have 
the same isomorphism type restricted to any elementary subalgebra of dimension $r$. 
On the other hand, using $L_\zeta$-modules, we give examples in 
Propositions~\ref{prop:zeta},~\ref{prop:existzeta} of modules which 
have constant radical types but do not arise from a single $G$-orbit  
and don't even have $G$-structure.

\end{remark}

\begin{ex}
\label{ex:proj}
If $P$ is a finite dimensional projective $\fu(\fg)$-module, then $\epsilon^*P$ is a projective
(and thus free) $\fu(\epsilon)$-module for any elementary subalgebra $\epsilon \subset \fg$.
Thus, the $r$-radical type and $r$-socle type of $P$ are constant.
\end{ex}

\begin{ex}
\label{ex:heller}
Let $\fg$ be a $p$-restricted Lie algebra.  Recall that $\Omega^s(k)$
for $s > 0$ is  the kernel of $\xymatrix@=12pt{P_{s-1}\ar[r]^-d& P_{s-2}}$, where $d$ is the differential in the 
minimal projective resolution $\xymatrix@=12pt{P_* \ar[r]& k}$ of $k$ as a $\fu(\fg)$-module; if $s < 0$, 
then $\Omega^s(k)$ is the cokernel of $\xymatrix@=12pt{I^{-s-2} \ar[r]^-d& I^{-s-1}}$, 
where $d$ is the differential in the minimal injective resolution $\xymatrix@=12pt{k = I^{-1} \ar[r]& I^*}$ of $k$ as a $\fu(\fg)$-module.
Then for any $s \in \Z$, the $s$-th Heller shift $\Omega^s(k)$  has
constant $r$-radical type and constant $r$-socle type for each $r > 0$.  

Namely, for any $\epsilon \in \bE(r,\fg)$, $\epsilon^*(\Omega^s(k))$  is the 
direct sum of the $s$-th Heller shift of the trivial module $k$ 
and a free $\fu(\epsilon)$-module (whose rank is independent of the choice of 
$\epsilon \in \bE(r,\fg)$).  
\end{ex}

The following example is one of many we can realize using Proposition \ref{prop:single}.

\begin{ex}
\label{gl2n}
Let $\fg = \gl_{2n}$ and $r = n^2$. If $M$ is any finite dimensional
rational $\GL_{2n}$-module, then it has constant $r$-radical type and  constant $r$-socle type by
Corollary~\ref{cor:gln}.
\end{ex}

In Example \ref{gl2n}, the dimension $r$ of elementary subalgebras $\epsilon \subset \fg$ is
maximal.  We next consider an example of non-maximal elementary subalgebras. 

\begin{ex}\label{ex:nullext}
Choose $r > 0$ such that
no elementary subalgebra of dimension $r$ in $\fg$ is maximal.  Let $\zeta \in
\widehat{\HHH}^n(\fu(\fg),k)$ for $n < 0$ be an element in negative Tate cohomology.
Consider the associated short exact sequence 
\begin{equation}
\label{seqq}
\xymatrix{
 0 \ar[r] & k \ar[r] & E \ar[r] & \Omega^{n-1}(k) \ar[r] & 0.
}
\end{equation}
Then $E$ has constant $r$-radical rank and constant $r$-socle rank for
every $j, \ 1\leq j \leq (p-1)r$.

Namely, we observe that the restriction of the exact sequence \eqref{seqq} to $\epsilon$ splits  for every $\epsilon \in \bE(r,\fg)$.
  This splitting is a consequence of \cite[3.8]{CFP2}
(stated for an elementary abelian $p$-group and equally applicable to any elementary
subalgebra $\ff \subset \fg$ which strictly contains $\epsilon$).  The assertion is now proved
with an appeal to Example \ref{ex:heller}. 
\end{ex}

We next proceed to consider modules $L_\zeta$, adapting to the context of
$p$-restricted Lie algebras the results of \cite[\S 5]{CFP2}.

\begin{prop} (see \cite[5.5]{CFP2})
\label{prop:zeta}
Suppose that we have a non-zero cohomology 
class $\zeta \in \HHH^m(\fu(\fg),k)$ satisfying
the condition that 
$$Z(\zeta) \ \subset \ \cN_p(\fg) \ \subset \ \fg$$
does not contain a linear subspace of dimension $r$ for some $r \geq 1$.  
Then the $\fu(\fg)$-module $L_\zeta$ has constant $r$-radical type.
\end{prop}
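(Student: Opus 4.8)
The plan is to describe $\epsilon^*L_\zeta$ for each $\epsilon\in\bE(r,\fg)$ explicitly enough to compute radical ranks: up to a free summand of $\epsilon$-independent rank it should be the module $L_{\res_\epsilon\zeta}$ attached to the restricted class $\res_\epsilon\zeta\in\HHH^m(\fu(\epsilon),k)$, after which everything comes down to comparing the $L_{\res_\epsilon\zeta}$. First I would dispose of the trivial case and translate the hypothesis. If $\bE(r,\fg)=\emptyset$ there is nothing to prove, so assume otherwise; then $\cN_p(\fg)$ contains an $r$-dimensional linear subspace, and since $V(\ul\fg)_{L_\zeta}=V(\ul\fg)=\cN_p(\fg)$ when $\zeta$ has odd degree (recall $p>2$), the hypothesis forces $m$ to be even and $Z(\zeta)$ to be the honest zero locus in $\cN_p(\fg)$ of the homogeneous function $\zeta$. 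Now fix $\epsilon\in\bE(r,\fg)$. The isomorphism $V(\ul\epsilon)_{\epsilon^*L_\zeta}\cong V(\ul\epsilon)\cap V(\ul\fg)_{L_\zeta}$ from the proof of Proposition~\ref{reduce}, together with the identification $V(\ul\epsilon)\cong\epsilon$, shows that the zero locus of $\res_\epsilon\zeta$ inside $\epsilon$ is $\epsilon\cap Z(\zeta)$, which is a proper subvariety of $\epsilon$ since $\epsilon\not\subseteq Z(\zeta)$. As $\res_\epsilon\zeta$ has even degree, this forces $\res_\epsilon\zeta\neq 0$ in $\HHH^m(\fu(\epsilon),k)$ for every $\epsilon\in\bE(r,\fg)$; this is the one place the hypothesis is used.

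Next I would establish the structure lemma. Restrict the defining sequence $0\to L_\zeta\to\Omega^m(k)\to k\to 0$ to $\fu(\epsilon)$. Restricting a minimal $\fu(\fg)$-projective resolution of $k$ to $\fu(\epsilon)$ and comparing with a minimal one over $\fu(\epsilon)$ gives $\epsilon^*\Omega^m(k)\cong\Omega^m_{\fu(\epsilon)}(k)\oplus Q$ with $Q$ free and $\Omega^m_{\fu(\epsilon)}(k)$ free of no free summand. Since $\fu(\epsilon)\cong k[t_1,\dots,t_r]/(t_1^p,\dots,t_r^p)$ is a local ring, the only map $\Omega^m_{\fu(\epsilon)}(k)\to k$ factoring through a projective is $0$, so the nonzero class $\res_\epsilon\zeta$ is represented by a unique surjection $\phi_0\colon\Omega^m_{\fu(\epsilon)}(k)\to k$, and $\ker\phi_0=L_{\res_\epsilon\zeta}$ by definition. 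Writing the restricted map as $(\phi_0,\psi)\colon\Omega^m_{\fu(\epsilon)}(k)\oplus Q\to k$ and projecting $\ker(\phi_0,\psi)$ onto $Q$ — surjective because $\phi_0$ is — produces a short exact sequence $0\to L_{\res_\epsilon\zeta}\to\epsilon^*L_\zeta\to Q\to 0$, which splits since $Q$ is projective. Hence $\epsilon^*L_\zeta\cong L_{\res_\epsilon\zeta}\oplus Q$, and since $\dim L_{\res_\epsilon\zeta}=\dim\Omega^m_{\fu(\epsilon)}(k)-1$ depends only on $r$ (through the isomorphism type of $\fu(\epsilon)$) and on $p,m$, the rank $\bigl(\dim L_\zeta-\dim\Omega^m_{\fu(\epsilon)}(k)+1\bigr)/p^r$ of $Q$ is independent of $\epsilon\in\bE(r,\fg)$.

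The remaining step — and the real obstacle — is to show that $\epsilon\mapsto\dim\Rad^j(L_{\res_\epsilon\zeta})$ is constant on $\bE(r,\fg)$ for each $j$; granting this, additivity of $\Rad^j$ over direct sums plus the fact that $\dim\Rad^j$ of a free $\fu(\epsilon)$-module of the (constant) rank of $Q$ depends only on $r,j,p$ shows $\dim\Rad^j(\epsilon^*L_\zeta)$ is constant, so $L_\zeta$ has constant $r$-radical type by Proposition~\ref{prop:el}. The difficulty is that the $L_{\res_\epsilon\zeta}$ are genuinely non-isomorphic in general (the schemes $\epsilon\cap Z(\zeta)$ vary with $\epsilon$), so constancy of the radical ranks cannot be deduced from an isomorphism-type statement — and indeed one should not expect the \emph{socle} type to be constant, which is why only the radical conclusion is asserted. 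The approach I would take, following the module theory of \cite[\S 5]{CFP2}, is to use that $L_{\res_\epsilon\zeta}$ is realized as a codimension-one $\fu(\epsilon)$-submodule with trivial quotient of the \emph{single} module $\Omega^m_{\fu(\epsilon)}(k)$; this already yields the squeeze $\Rad^{j+1}\bigl(\Omega^m_{\fu(\epsilon)}(k)\bigr)\subseteq\Rad^j(L_{\res_\epsilon\zeta})\subseteq\Rad^j\bigl(\Omega^m_{\fu(\epsilon)}(k)\bigr)$ between two quantities that are $\epsilon$-independent (the Heller shift $\epsilon^*\Omega^m(k)$ has constant radical type by Example~\ref{ex:heller}). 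Combining this with the lower semicontinuity of $\epsilon\mapsto\dim\Rad^j(\epsilon^*L_\zeta)$ from Theorem~\ref{upper-lower} and the dual statement for $\epsilon^*(L_\zeta)^\#$ obtained through Remark~\ref{rem:perp} (and the identity $(L_{\res_\epsilon\zeta})^\#\cong$ a Heller shift of $L_{\res_\epsilon\zeta}$), one pins $\dim\Rad^j(L_{\res_\epsilon\zeta})$ to a single value. Carrying out this last bookkeeping precisely is where the effort goes.
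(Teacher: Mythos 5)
Your first two paragraphs are sound and run parallel to the paper's bookkeeping: from $\epsilon\not\subseteq Z(\zeta)$ you get $\zeta\downarrow_\epsilon\neq 0$, and the decomposition $\epsilon^*L_\zeta\simeq L_{\zeta\downarrow_\epsilon}\oplus Q$ with $Q$ free of $\epsilon$-independent rank is exactly the content of the generalization of \cite[5.4]{CFP2} that the paper invokes. But the proof stops where the real work begins, and the closing plan does not work. The squeeze $\Rad^{j+1}(\Omega^m_{\fu(\epsilon)}(k))\subseteq\Rad^j(L_{\zeta\downarrow_\epsilon})\subseteq\Rad^j(\Omega^m_{\fu(\epsilon)}(k))$ traps the dimension between two constants that are genuinely different, so it yields no constancy; and lower semicontinuity of $\epsilon\mapsto\dim\Rad^j(\epsilon^*L_\zeta)$ (Theorem \ref{upper-lower}) only says the locus of non-maximal rank is closed, never that it is empty. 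The ``dual'' input you propose is not new information: by Remark \ref{rem:perp}, upper semicontinuity of $\dim\Soc^j(\epsilon^*(L_\zeta^\#))$ is literally equivalent to the lower semicontinuity you already have (and $(L_{\zeta\downarrow_\epsilon})^\#$ is not a Heller shift of $L_{\zeta\downarrow_\epsilon}$ in general; dualizing the defining sequence gives an extension of $\Omega^{-m}(k)$ by $k$, not a shift of $L$). Indeed, since socle ranks of such modules need not be constant, no two-sided pinching of this kind can be expected.

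The missing idea is the one the paper takes from \cite[5.3]{CFP2}. Your own first paragraph shows more than $\zeta\downarrow_\epsilon\neq 0$: since the zero locus of $\zeta\downarrow_\epsilon$ in $\epsilon$ is proper, the class is non-nilpotent, hence (because $\HHH^*(\fu(\epsilon),k)$ is an exterior algebra tensored with a polynomial algebra) a non-zero-divisor. For a non-zero-divisor class over $\fu(\epsilon)$ one has the much stronger equality of submodules $\Rad(L_{\zeta\downarrow_\epsilon})=\Rad(\Omega^m(\epsilon^*k))$, i.e.\ $L_{\zeta\downarrow_\epsilon}$ contains the entire radical of $\Omega^m_{\fu(\epsilon)}(k)$. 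From this, $\Rad^j(L_{\zeta\downarrow_\epsilon})=\Rad^j(\Omega^m_{\fu(\epsilon)}(k))$ for all $j\geq 1$ (equivalently, in the paper's formulation, $\Rad^j(\epsilon^*L_\zeta)=\Rad^j(\epsilon^*\Omega^m(k))$, using that $\epsilon^*L_\zeta$ sits inside $\epsilon^*\Omega^m(k)$ with radicals of equal dimension), and the right-hand side has $\epsilon$-independent dimension by Example \ref{ex:heller}. Without this non-zero-divisor input, or an equivalent replacement, your argument does not close; with it, the semicontinuity machinery is not needed at all.
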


\begin{proof}
Consider  $\epsilon \in \bE(r,\fg)$.
The hypothesis implies that $\epsilon$ is not contained in  $Z(\zeta)$.  Hence, $\zeta\downarrow_\epsilon
\in \HHH^m(\fu(\epsilon),k)$ is not nilpotent. Recall that $\HHH^*(\fu(\epsilon),k) \simeq \HHH^*(\bZ/p^{\times r}, k)$ is a tensor 
product of a symmetric and an exterior algebras on $r$ generators. Therefore, a non-nilpotent element is not a zero divisor. 
 Proposition 5.3 of \cite{CFP2} applied to $\epsilon$ implies that 
\begin{equation}
\label{eq:Lzeta}
\Rad(L_{\zeta\downarrow_\epsilon}) = \Rad(\Omega^n(\epsilon^*k)),
\end{equation}
where $\Omega^n(\epsilon^*k)$ is the $n$-th Heller shift of the trivial $\fu(\epsilon)$-module. 
We note that the statement and proof of \cite[Lemma 5.4]{CFP2} generalizes immediately to the map 
$\xymatrix@=12pt{\fu(\epsilon) \ar[r]& \fu(\fg)}$ yielding the statement that 
$\dim \Rad(\epsilon^*(L_\zeta)) - \dim \Rad(L_{\zeta\downarrow_\epsilon}) = 
\dim \Rad(\epsilon^*(\Omega^n(k)))  - \dim \Rad(\Omega^n(\epsilon^*k))$ 
is independent of $\epsilon$ whenever $\zeta\downarrow_\epsilon \not = 0$. 
Combined with \eqref{eq:Lzeta}, this allows us to conclude that 
\sloppy{

}
\[
\dim \Rad(\epsilon^*(L_\zeta)) = \dim \Rad(\epsilon^*(\Omega^n(k))).
\]
 
Since $\epsilon^*(L_\zeta)$ is a submodule of $\epsilon^*(\Omega^n(k))$ this further implies that equality of radicals 
\[
\Rad^j(\epsilon^*(L_\zeta)) = \Rad^j(\epsilon^*(\Omega^n(k)))
\] 
for all $j>0$. Since $\Omega^n(k)$ has constant $r$-radical type  by 
Example~\ref{ex:heller}, we conclude that the same holds for $L_\zeta$. 
\sloppy{

}
\end{proof}
Utilizing another result of \cite{CFP2}, we obtain a large class of $\fu(\fg)$-modules of 
constant radical type.
\begin{prop}\label{prop:existzeta}
Let $d$ be a positive integer, sufficiently large compared to $r$ and $\dim \fg$. There exists some $0 \not= \zeta \in \HHH^{2d}(\fu(\fg),k)$
such that $L_\zeta$ has constant $r$-radical type.
\end{prop}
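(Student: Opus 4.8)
The plan is to invoke Proposition~\ref{prop:zeta}: it suffices to produce, for $d$ large, a nonzero class $\zeta \in \HHH^{2d}(\fu(\fg),k)$ whose zero locus $Z(\zeta) \subset \cN_p(\fg) \subset \fg$ contains no linear subspace of dimension $r$. Since $\cN_p(\fg)$ has finite dimension, say $\dim \cN_p(\fg) = N$, any $r$-dimensional linear subspace $\epsilon \subset \cN_p(\fg)$ determines a point of the projective variety $\bE(1,\fg) = \Proj k[\cN_p(\fg)]$ together with more data; in fact the relevant parameter space is $\bE(r,\fg) \subset \Grass(r,\fg)$, which is a projective variety of some fixed dimension $D$. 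I would aim to choose $\zeta$ so that $Z(\zeta)$, a hypersurface section of $\cN_p(\fg)$, has codimension too large in every such $\epsilon$ to contain it.

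The key step is the following dimension count, which is exactly the content of the result of \cite{CFP2} alluded to in the statement (the analogue of \cite[5.6 or 5.7]{CFP2}). For a single elementary subalgebra $\epsilon \in \bE(r,\fg)$, the image of the restriction map $\HHH^{2d}(\fu(\fg),k) \to \HHH^{2d}(\fu(\epsilon),k)$ is, in high enough degree, controlled by the support variety $\epsilon$: since $\HHH^{\mathrm{ev}}(\fu(\epsilon),k)$ is (for $p>2$) a polynomial ring in $r$ variables, the locus of classes in $\HHH^{2d}(\fu(\fg),k)$ restricting to $0$ on $\fu(\epsilon)$ is a proper linear subspace once $d \gg 0$, and moreover its codimension grows without bound as $d \to \infty$ (because $\dim_k \HHH^{2d}(\fu(\epsilon),k) \sim d^{r-1}/(r-1)!$). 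First I would make this precise: for each $\epsilon$, the set $Z_\epsilon = \{\zeta \in \HHH^{2d}(\fu(\fg),k) : \epsilon \subset Z(\zeta)\}$ is a linear subspace of codimension at least (roughly) $\binom{d+r-1}{r-1}$ for $d$ large. Then I would let $\epsilon$ vary: the collection $\{(\zeta,\epsilon) : \epsilon \subset Z(\zeta)\} \subset \HHH^{2d}(\fu(\fg),k) \times \bE(r,\fg)$ is closed, and projecting to the first factor, its image has dimension at most $\dim \HHH^{2d}(\fu(\fg),k) - \binom{d+r-1}{r-1} + D$. Choosing $d$ large enough that $\binom{d+r-1}{r-1} > D$ (here is where "$d$ sufficiently large compared to $r$ and $\dim\fg$" enters, since $D$ is bounded in terms of $\dim\fg$), this image is a proper closed subset, so a generic $\zeta$ lies outside it, i.e. $Z(\zeta)$ contains no $r$-dimensional elementary subalgebra. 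Finally, a linear subspace of dimension $r$ contained in $\cN_p(\fg)$ is by definition spanned by commuting... no — one must be slightly careful here: Proposition~\ref{prop:zeta} requires that $Z(\zeta)$ contain no linear subspace of dimension $r$ at all, not merely no elementary subalgebra. So instead I would run the same count over all of $\Grass(r,\cN_p(\fg))$ rather than just $\bE(r,\fg)$; this is still a projective variety of dimension bounded by $N(\dim\fg - N) \le (\dim\fg)^2$, and the argument is unchanged. Then Proposition~\ref{prop:zeta} applies verbatim and gives that $L_\zeta$ has constant $r$-radical type.

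The main obstacle is establishing the lower bound on the codimension of $Z_\epsilon$ uniformly in $\epsilon$, i.e. controlling the restriction map $\HHH^{2d}(\fu(\fg),k) \to \HHH^{2d}(\fu(\epsilon),k)$ from below as $\epsilon$ ranges over a positive-dimensional family. One clean way around this: rather than bounding the image of restriction, bound $Z_\epsilon$ directly — $\epsilon \subset Z(\zeta)$ means $\zeta$ lies in the kernel of evaluation $\HHH^{2d}(\fu(\fg),k) \to k[\cN_p(\fg)]_{2d} \to k[\epsilon]_{2d}$ (using the identification of $\HHH^{\mathrm{ev}}$ with functions on the null cone from Example~\ref{ex:r=1} and \cite{SFB2}), and the target $k[\epsilon]_{2d} \cong k[x_1,\dots,x_r]_{2d}$ has dimension $\binom{2d+r-1}{r-1}$, independent of $\epsilon$. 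Provided the composite $\HHH^{2d}(\fu(\fg),k) \to k[\epsilon]_{2d}$ is surjective for $d \gg 0$ — which holds because $\HHH^{\mathrm{ev}}(\fu(\fg),k)$ surjects onto the image of the finite morphism $\cN_p(\fg) \to \Spec\HHH^{\mathrm{ev}}$ up to integral closure, hence onto $k[\epsilon]$ in high degrees after passing to a Veronese — this gives exactly $\operatorname{codim} Z_\epsilon = \binom{2d+r-1}{r-1}$, uniformly. Handling this surjectivity cleanly (or replacing it by the weaker statement that the codimension grows, which is all that is needed) is the technical heart of the argument; everything else is the incidence-variety dimension count sketched above.
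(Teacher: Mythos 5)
Your overall strategy is sound and it does reach the statement, but it takes a genuinely different route from the paper. The paper does not run the incidence-variety count inside $\HHH^{2d}(\fu(\fg),k)$ at all: it works in the space $S^d(\fg^\#)$ of degree-$d$ polynomial functions on the ambient vector space $\fg$, cites \cite[5.7]{CFP2} for the fact that the forms $F$ whose zero locus in $\Proj(\fg)$ contains no linear $\bP^{r-1}$ are dense for $d\gg 0$, and then pushes such an $F$ into $\HHH^{2d}(\fu(\fg),k)$ via the Hochschild map $S^*(\fg^\#[2])\to \HHH^{\rm ev}(\fu(\fg),k)$, using density once more to arrange that the resulting class $\zeta$ (i.e.\ the restriction of $F$ to the null cone) is nonzero; since $Z(\zeta)=Z(F)\cap\cN_p(\fg)$, the hypothesis of Proposition~\ref{prop:zeta} is immediate. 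Your version redoes the Fano-type dimension count directly in cohomology, parametrized by the variety of $r$-planes contained in $\cN_p(\fg)$; this buys a self-contained argument (no appeal to \cite[5.7]{CFP2}) and makes the nonvanishing of $\zeta$ automatic from genericity, at the cost of the uniform codimension bound you correctly identify as the technical heart. There your justification is the weak link: the ``isogeny/integral closure/Veronese'' argument is both vague and unnecessary, since the bound follows at once from the Hochschild factorization --- the composite $S^d(\fg^\#)\to\HHH^{2d}(\fu(\fg),k)\to k[\epsilon]$ is just restriction of polynomial functions from $\fg$ to the linear subspace $\epsilon$, hence surjective onto the degree-$d$ part, so $Z_\epsilon$ has codimension at least $\binom{d+r-1}{r-1}$ uniformly in $\epsilon$ (note also the grading: a class of cohomological degree $2d$ restricts to a polynomial of degree $d$ on $\epsilon$, so the relevant dimension is $\binom{d+r-1}{r-1}$, not $\binom{2d+r-1}{r-1}$ --- immaterial, since either grows without bound). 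With that repair, which is precisely the mechanism the paper's proof exploits, your incidence count over the (projective, hence proper) variety of $r$-planes in $\cN_p(\fg)$ goes through and Proposition~\ref{prop:zeta} applies as you say; you were also right to insist on all linear subspaces rather than only elementary subalgebras, matching the hypothesis of that proposition.
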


\begin{proof}
The embedding $V(\ul \fg) \simeq \Spec \HHH^{\rm ev}(\fu(\fg),k) \ \hookrightarrow \ \fg$ (for $p>2$) is given by the natural 
map $\xymatrix@=12pt{S^*(\fg^\#[2]) \ar[r]& \HHH^*(\fu(\fg),k)}$ determined by the Hochschild construction 
$\xymatrix@=12pt{\fg^\# \ar[r]& \HHH^2(\fu(\fg),k)}$ (see, for example, \cite{FPar1}).  (Here, $\fg^\#[2]$ is the 
vector space dual to the underlying vector space of $\fg$, placed in cohomological 
degree 2.) As computed in \cite[5.7]{CFP2}, the set of all homogeneous polynomials 
$F$ of degree $d$ in $S^*(\fg^\#[2])$
 such that the zero locus $Z(F) \subset \Proj(\fg)$ does not contain a linear hyperplane 
isomorphic to $\bP^{r-1}$ is dense in the space of all polynomials of degree $d$ for $d$ sufficiently large.    
Let $\zeta$ be the restriction to  $\Proj k[V(\ul\fg)]$ of such an $F$ in $S^*(\fg^\#[2])$; 
since such an $F$ can be chosen from a dense subset  of
homogeneous polynomials of degree $d$, we may find such an $F$ whose associated
restriction $\zeta$ is non-zero.  Now, we may apply Proposition \ref{prop:zeta} to 
conclude that $L_\zeta$ has constant $r$-radical type.
\end{proof}

The following closure property for modules of constant radical and socle types is an 
extension of a similar property for modules of constant Jordan type.

\begin{prop} Suppose $\bE(r, \fg)$ is connected. 
Let  $M$ be a $\fu(\fg)$-module of constant $(r,j$)-radical rank (respectively, constant 
$(r,j$)-socle rank) for some $r, j$.  Then any $\fu(\fg)$-summand $M^\prime$ of $M$ also has 
constant $(r,j$)-radical rank  (resp., constant $(r,j$)-socle rank).
\end{prop}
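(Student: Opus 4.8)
The plan is to reduce everything to the semicontinuity of rank functions established in Theorem~\ref{upper-lower}, combined with the additivity of radicals and socles under direct sums.

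First I would write $M = M' \oplus M''$ as $\fu(\fg)$-modules; this decomposition persists after restriction along $\fu(\epsilon) \hookrightarrow \fu(\fg)$ for every $\epsilon \in \bE(r,\fg)$, so $\epsilon^*M = \epsilon^*M' \oplus \epsilon^*M''$ as $\fu(\epsilon)$-modules. Fixing a basis $\{u_1, \ldots, u_r\}$ of $\epsilon$, each operator $u_1^{j_1}\cdots u_r^{j_r}$ acting on $\epsilon^*M$ preserves this decomposition, so its image is the direct sum of its images on $\epsilon^*M'$ and $\epsilon^*M''$. Summing over all $(j_1,\ldots,j_r)$ with $\sum j_i = j$ gives $\Rad^j(\epsilon^*M) = \Rad^j(\epsilon^*M') \oplus \Rad^j(\epsilon^*M'')$, and dually (or via Remark~\ref{rem:perp}) $\Soc^j(\epsilon^*M) = \Soc^j(\epsilon^*M') \oplus \Soc^j(\epsilon^*M'')$. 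In particular $\dim\Rad^j(\epsilon^*M) = \dim\Rad^j(\epsilon^*M') + \dim\Rad^j(\epsilon^*M'')$ for every $\epsilon$, and similarly for socles.

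Next I would invoke Theorem~\ref{upper-lower}: the functions $f\colon \epsilon \mapsto \dim\Rad^j(\epsilon^*M')$ and $g\colon \epsilon \mapsto \dim\Rad^j(\epsilon^*M'')$ are both lower semicontinuous on $\bE(r,\fg)$, while by hypothesis $f+g$ equals the constant $c = \dim\Rad^j(\epsilon^*M)$. Then $f = c - g$ is upper semicontinuous, being the difference of a constant and a lower semicontinuous function, so $f$ is both upper and lower semicontinuous, hence continuous; since $f$ is integer-valued and $\bE(r,\fg)$ is connected, $f$ is constant. Thus $M'$ has constant $(r,j)$-radical rank (and $M''$ does too, by symmetry). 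The socle statement is identical with the roles of "upper" and "lower" interchanged: $\epsilon \mapsto \dim\Soc^j(\epsilon^*M')$ and $\epsilon \mapsto \dim\Soc^j(\epsilon^*M'')$ are upper semicontinuous by Theorem~\ref{upper-lower}, sum to a constant, hence each is continuous and therefore constant on the connected variety $\bE(r,\fg)$.

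There is no serious obstacle here; the argument is essentially the standard "summand of a constant Jordan type module" argument transported to the radical/socle setting. The only points requiring a line of care are the elementary topological fact that a (bounded) integer-valued function which is simultaneously upper and lower semicontinuous on a connected space is constant, and the remark that a $\fu(\fg)$-module direct summand remains a direct summand after restriction to each $\fu(\epsilon)$, so that the rank functions genuinely add.
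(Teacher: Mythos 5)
Your argument is correct and is essentially the paper's own proof: the paper likewise decomposes $M = M' \oplus M''$, notes that the two local $(r,j)$-radical (resp.\ socle) rank functions are lower (resp.\ upper) semicontinuous by Theorem~\ref{upper-lower} and sum to a constant, and concludes both summands have constant rank. You have merely made explicit the additivity of $\Rad^j$ and $\Soc^j$ over direct sums and the connectedness step that the paper leaves implicit.
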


\begin{proof}
Write  $M = M^\prime \oplus M^{\prime\prime}$, and set $m$ equal to the $(r,j)$-radical 
rank of $M$.  Since the local  $(r,j)$-radical types of 
$M^\prime, \  M^{\prime\prime}$ are both lower semicontinuous by Theorem \ref{upper-lower} and
since the sum of these local radical types is a constant function, we conclude that both  
$M^\prime, \  M^{\prime\prime}$ have constant $(r,j)$-radical rank.

The argument for $(r,j)$-socle rank is essentially the same.
\end{proof}

\end{document}